\documentclass[12pt]{amsart}

\usepackage[top=1.6cm,bottom=2cm,left=1.1in,right=1.1in]{geometry}
\usepackage[T1]{fontenc}
\usepackage[utf8]{inputenc}
\usepackage{lmodern}
\usepackage{amsmath,amssymb,amsthm,mathtools}
\usepackage{mathrsfs}
\usepackage{enumitem}
\usepackage{microtype}
\usepackage[colorlinks=true,linkcolor=blue,citecolor=blue,urlcolor=blue]{hyperref}

\numberwithin{equation}{section}

\newtheorem{theorem}{Theorem}[section]
\newtheorem{lemma}[theorem]{Lemma}
\newtheorem{corollary}[theorem]{Corollary}
\newtheorem{proposition}[theorem]{Proposition}

\theoremstyle{definition}
\theoremstyle{remark}
\newtheorem{remark}[theorem]{Remark}
\theoremstyle{definition}
\newtheorem{definition}[theorem]{Definition}

\newcommand{\BC}{\mathrm{BC}}

\newcommand{\MK}{\mathcal{MK}}
\newcommand{\MN}{\mathcal{MN}}

\title[Boundary Cases of the \(J\)-Equation]
{Boundary Cases of the $J$-Equation:
Divisorial Rigidity and a Global $C^0$ Estimate}

\author{Jixiang Fu}
\address{Shanghai Center for Mathematical Sciences,
  Fudan University,
  Shanghai 200433, China}
\email{majxfu@fudan.edu.cn}

\author{Ziyi Zhang}
\address{School of Mathematical Sciences, Fudan University,
Shanghai 200433, China}
\email{21210180101@m.fudan.edu.cn}

\date{}

\begin{document}

\begin{abstract}
We study two  boundary cases of the stability condition for the
$J$-equation. First, under the $J$-semistable condition, we show that the
destabilizing prime divisors form an exceptional family in the sense of Boucksom, with a uniform numerical
gap away from them. The related modified nef estimate can remove the $J$-big assumption in Liu's work~\cite{Liu2026Boundary}.
 Second, under the smooth boundary cone condition, we obtain a uniform global \(C^0\) estimate
for solutions of the approximating twisted \(J\)-equations. As a consequence, we construct a bounded-potential Bedford--Taylor solution
of the \(J\)-equation, which is smooth outside the destabilizing prime
divisors.

\end{abstract}

\maketitle
\section{Introduction}

Let $(M^n,\omega)$ be a compact K\"ahler manifold of complex
 dimension $n\geq 2$. Let $\chi$ be another K\"ahler form.  The $J$-equation,
introduced by Donaldson \cite{Donaldson1999} and Chen \cite{Chen2000},
seeks a smooth function $u$ solving
\[
    \chi_u^n
    =
    n\chi_u^{n-1}\wedge\omega,
    \qquad
    \chi_u:=\chi+\sqrt{-1}\partial\bar\partial u>0,
\]
under the normalization
\(
    \displaystyle\int_M\chi^n
    =
    n\displaystyle\int_M\chi^{n-1}\wedge\omega.
\) The $J$-equation is closely related to the $J$-flow and to the geometry of
the space of K\"ahler metrics. 

Song--Weinkove \cite{SongWeinkove} established the existence of a smooth solution under the smooth subsolution condition (or cone condition). Lejmi--Sz\'ekelyhidi \cite{LejmiSzekelyhidi2015} subsequently proposed a numerical conjecture for solvability, which was proved by Collins--Sz\'ekelyhidi \cite{CollinsSzekelyhidi2017} on toric manifolds. A breakthrough was achieved by Chen \cite{Chen}, who established the uniform $J$-stable criterion and developed a powerful analytic framework for the $J$-equation. Building on these developments, Datar--Pingali \cite{DatarPingali} obtained a general numerical criterion in the projective case, and Song \cite{Song} finally proved the $J$-stable criterion on K\"ahler manifolds.

\begin{theorem}[$J$-stable criterion, see Theorem 1.1 of Song~\cite{Song}]\label{J-stable criterion}
    Let $(M^n,\omega)$ be a compact K\"ahler manifold of complex
 dimension $n\geq 2$. Let $\chi$ be another K\"ahler form. Suppose that the pair $(\chi,\omega)$ is $J$-stable, that is,
  $\displaystyle\int_M \left(\chi^n-n\chi^{n-1}\wedge\omega\right)\geq 0,$ and for any $p$-dimensional subvarieties $V\subset M$ with $1\leq p\leq n-1$, \(
    \displaystyle\int_V \left(\chi^p-p\chi^{p-1}\wedge\omega\right)>0.\)
    Then there exists a smooth function $u$ such that  $\chi_u$ is a K\"ahler form and satisfies the cone condition $$\chi_u^{n-1}-(n-1)\chi_u^{n-2}\wedge\omega>0.$$
\end{theorem}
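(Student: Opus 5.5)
The proof goes by induction on the dimension, following the strategy of Chen, Datar--Pingali and Song: solve a family of twisted equations along a continuity path, and regularize near subvarieties. The stated cone condition is equivalent to the existence of a subsolution of the $J$-equation. By Song--Weinkove, the existence of $u$ is therefore equivalent to the solvability of $\chi_u^n = n\chi_u^{n-1}\wedge\omega$ when the total integral is zero. It also covers the twisted equation $\chi_u^n = n\chi_u^{n-1}\wedge\omega + c\,\omega^n$ with $c\ge 0$ when the integral is nonnegative.

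\textbf{Step 1: continuity path.} Consider $\chi_t=\chi+t\omega$ for $t\ge 0$. For $t$ large the cone condition holds trivially, since $\chi_t^{n-1}-(n-1)\chi_t^{n-2}\wedge\omega>0$ once $\chi_t\ge (n-1)\omega$. Let $t_0$ be the infimum of the $t$ for which $[\chi_t]$ contains a form satisfying the cone condition with respect to $\omega$. The open part of the argument is the openness of the smooth subsolution condition. It suffices to show $t_0<0$ or $t_0=0$ with attainment; here the strictness of the stability inequalities on every $V$ will be used to push $t$ slightly below $0$.

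\textbf{Step 2: induction over subvarieties.} Suppose $t_0\ge 0$. Pass to the limit $t\downarrow t_0$ with solutions of twisted equations in $[\chi_t]$. Applying Chen's analysis (the $C^0$, $C^2$ and $\epsilon$-regularity estimates), we get a degenerating sequence. Its blow-up locus is an analytic subvariety $Z$, obtained as a Lelong-number or concentration set of the limiting current $\chi_{t_0}^{n-1}\wedge\omega$-type measures.

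By induction on dimension, applied to a resolution of each component $V$ of $Z$, the strict inequality $\int_V(\chi_{t_0}^p-p\chi_{t_0}^{p-1}\wedge\omega)>0$ produces a subsolution near $V$. This local subsolution must then be glued to the global one away from $Z$ using a regularized maximum. On $M$ non-projective this must be done with Kähler currents with analytic singularities (Demailly regularization) in place of the ample divisors used by Datar--Pingali. Song's key contribution, which I would follow, is a \emph{mass concentration} argument: one solves a twisted equation with right-hand side concentrating near $V$ and shows the solution becomes a subsolution there.

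\textbf{Step 3: gluing and contradiction.} Having a subsolution in a neighborhood of $Z$ and one on $M\setminus Z$, both in $[\chi_{t_0-\delta}]$ for small $\delta>0$, combine them by a regularized maximum. The cone condition is preserved under regularized max because the operator is convex in the appropriate sense. This gives a global subsolution for $t<t_0$, a contradiction. Hence the cone condition holds in $[\chi]$.

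The main obstacle is Step 2: producing a subsolution near a singular subvariety $V$ from numerical positivity on $V$ alone, uniformly enough to glue. I would attempt it via resolution of singularities and the mass concentration technique, with the inductive hypothesis supplying positivity on all subvarieties of $V$ as well.
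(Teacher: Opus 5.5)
\textbf{Verdict: there is a genuine gap.} The paper does not prove this statement. It quotes it from Song and uses it as a black box, for the pairs $(t\chi,\omega)$ with $t>1$, in Lemma~\ref{estimateforKahlerclass} and Proposition~\ref{keyestimate-intro}. So your proposal has to stand on its own. As written it is an outline of the Chen--Datar--Pingali--Song architecture: continuity in the class, a current with analytic singularities that is a strict subsolution off an analytic set $Z$, induction on the components of $Z$, and gluing by a regularized maximum. It is not a proof. The step you call ``the main obstacle'' is where essentially all of the theorem's content beyond Chen's uniform criterion lies, and you do not supply it.

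\textbf{The missing idea: using a strict but non-uniform hypothesis.} To see where the difficulty sits, use the scaling path instead of $\chi+t\omega$. One has $F_p(s\chi)=s^{p-1}\bigl(F_p(\chi)+(s-1)\chi^p\bigr)$, so for every $s>1$ and every $V$, $\int_VF_p(s\chi)\ge\frac{s-1}{s}\int_V(s\chi)^p$. Chen's uniform criterion (in twisted form) then already gives the cone condition in $[s\chi]$, and everything reduces to the endpoint $s=1$.
\begin{itemize}
\item Your sentence that strictness ``will be used to push $t$ slightly below $0$'' is exactly this endpoint claim, stated with no mechanism. Strict positivity on each of infinitely many $V$ gives no uniform margin. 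A posteriori the theorem does give one, namely $\int_VF_p(\chi)\ge c\int_V\omega^p$, but it cannot be used a priori.
\item Step~2 does not supply the mechanism. The induction hypothesis cannot be applied on a resolution $\pi:\widetilde V\to V$: $\pi^*\chi$ and $\pi^*\omega$ are not K\"ahler, and $\int_WF_q(\pi^*\chi)=0$ (computed with $\pi^*\omega$) for every $W\subset\widetilde V$ contracted by $\pi$. So the strict hypotheses fail on $\widetilde V$.
\item Perturbing to a K\"ahler pair, e.g.\ $\pi^*\omega\mapsto\pi^*\omega+\varepsilon'\theta$ with $\theta$ K\"ahler on $\widetilde V$, costs $q\varepsilon'\int_W(\pi^*\chi+\varepsilon\theta)^{q-1}\wedge\theta$ on every $q$-dimensional $W$. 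On non-contracted $W$ this must be paid for out of $\int_{\pi(W)}F_q(\chi)$ and an a priori uncontrolled gain from perturbing $\pi^*\chi$. Without a uniform lower bound there is no reason a single $\varepsilon'>0$ works for all $W$.
\end{itemize}
Either the inductive statement must be reformulated for such degenerate pairs, or some other device must make the non-uniform hypothesis usable. That is the heart of Song's theorem. Invoking mass concentration does not supply it by itself: as the paper notes in the proof of Lemma~\ref{estimateforKahlerclass}, that device already goes back to Demailly--P\u{a}un and Chen.

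\textbf{Other steps that are only named.}
\begin{itemize}
\item Nothing you cite shows that the blow-up locus of a degenerating family of solutions is analytic. A weak limit of subsolutions also satisfies at best the closed cone condition. The gluing needs a $\chi_{t_0}$-psh function with analytic singularities along $Z$ that is a strict subsolution on $M\setminus Z$, say $F_{n-1}\ge\delta\,\omega^{n-1}$ there. In the Demailly--P\u{a}un scheme such a function is built from twisted equations with concentrating right-hand sides (compare the auxiliary equation in the proof of Lemma~\ref{estimateforKahlerclass}), together with Siu's analyticity of Lelong upper level sets and Demailly regularization.
\item Passing from a subsolution on $\widetilde V$ to one on a neighborhood of a possibly singular $V$ in $M$ is a separate extension problem. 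It needs an embedded resolution and control along the exceptional divisors, and you do not address it.
\end{itemize}

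\textbf{Smaller points.}
\begin{itemize}
\item Along $\chi+t\omega$ you use $\int_VF_p(\chi_{t_0})>0$ for $t_0>0$. This is not automatic: $\frac{d}{dt}\int_VF_p(\chi+t\omega)=p\int_V\omega\wedge F_{p-1}(\chi+t\omega)$ is a mixed intersection number, not an integral over a subvariety unless $[\omega]$ is a divisor class. The scaling path avoids this.
\item The admissible set of $t$ is open and upward closed. So ``$t_0=0$ with attainment'' cannot occur, and what you need is $t_0<0$.
\item The condition $\chi_t\ge(n-1)\omega$ should be strict.
\end{itemize}
The regularized-maximum gluing itself is fine, because the $J$-cone is convex and stable under adding positive forms.
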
 

It is natural to ask what canonical solutions one
should expect for the $J$-equation when the stability condition fails, as suggested in~\cite{SongWeinkove, Song,  DatarMeteSong2026}.
There has been substantial progress on this question; see, for instance,
\cite{ChenXu2026, DatarMeteSong2026, FangLai12, FangLaiSongWeinkove2014, Fu2026, KD24, KhalidDyrefelt2026, Liu2026Semistable, Liu2026Boundary, Murakami2026,SongWeinkove, Sun2024Boundary, To2023}.
The purpose of this paper is to study two closely related boundary
cases in which the $J$-stable condition may fail, with particular emphasis on the role of destabilizing prime divisors in the degeneration of the $J$-equation.

The first case is the  $J$-semistable condition:
  \begin{equation}\label{topintegralnonnegative}
      \int_M \left(\chi^n-n\chi^{n-1}\wedge\omega\right)\geq 0,
      \end{equation} and for any $p$-dimensional subvarieties $V\subset M$ with $1\leq p\leq n-1$,
 \begin{equation}\label{$J$-semistablecondition}
    \int_V \left(\chi^p-p\chi^{p-1}\wedge\omega\right)\geq 0.\end{equation}
   The second case is the smooth boundary cone condition.  In this case, we assume the normalization \begin{equation}\label{normalizationcondition}\int_M\chi^n=n\int_M\chi^{n-1}\wedge \omega,
\end{equation} and
    \begin{equation}\label{semisubsolutioncondition}
        \chi^{n-1}-(n-1)\chi^{n-2}\wedge\omega\geq0.
        \end{equation}
 Clearly, the smooth boundary cone condition implies the $J$-semistable condition.
 
In order to identify the possible
locus along which the $J$-equation may degenerate, it is
natural to consider the destabilizing set introduced by Khalid
--Dyrefelt~\cite{KhalidDyrefelt2026}.  For $1\leq p \leq n$, we denote the $(p,p)$ form \begin{equation}\label{P_m(chi)}
F_p(\chi):=\chi^{p}-p\chi^{p-1}\wedge\omega.
\end{equation}
 An irreducible $p$-dimensional subvariety $Z$ satisfying $\int_ZF_{p}(\chi)=0$ will be called a destabilizing $p$-dimensional subvariety.
 
 Fang--Lai~\cite{FangLai12} and Datar--Mete-Song~\cite{DatarMeteSong2026} studied many examples on Calabi ansatz, providing important insights on the global $C^0$ estimate of the $J$-flow and its regularity outside the destabilizing subvarieties. Fang--Lai--Song--Weinkove~\cite{FangLaiSongWeinkove2014} proved that on K\"ahler surfaces, under the smooth boundary cone condition, the $J$-flow converges to a continuous potential solution, with smooth convergence outside the destabilizing curves. Recently, Liu~\cite{Liu2026Semistable} proved the high-order regularity outside the destabilizing subvarieties on K\"ahler threefolds, and later Liu~\cite{Liu2026Boundary} generalized the results to higher dimensions under the  $J$-big assumption.

From a different perspective, Khalid--Dyrefelt~\cite{KhalidDyrefelt2026} studied the structure of the destabilizing subvarieties, and  proved, under some positivity assumptions on the factor classes $[\chi-m\omega]$,  the finiteness of irreducible destabilizing analytic subvarieties of dimension $m$. X. Fu~\cite{Fu2026} proved an analogous result on toric manifolds.  Recently, Liu~\cite{Liu2026Boundary} suggested that the finiteness of destabilizing prime divisors can be obtained by a $J$-big assumption.

In this paper,  we focus on the destabilizing prime divisors. Our first result is a  rigidity property: the destabilizing prime divisors form an exceptional family in the sense of Boucksom \cite{Boucksom}.  More generally, we obtain a quantitative numerical gap defined as in equation~\eqref{theboundc}.
   \begin{theorem}
    \label{t1}
 Let $(M^n,\omega)$ be a compact K\"ahler manifold of complex
 dimension $n\geq 2$. Let $\chi$ be another K\"ahler form.  Suppose that the $J$-semistable condition ~\eqref{topintegralnonnegative}--~\eqref{$J$-semistablecondition} holds. Set the positive constant \begin{equation}\label{theboundc}
     c= \frac{2n\displaystyle\int_M\Big(\chi^n-n\chi^{n-1}\wedge\omega\Big)+\int_M \omega^n}{8n^2\displaystyle\int_M\chi\wedge\omega^{n-1}},
     \end{equation}and let \begin{equation}\label{S}
     S:=\{ D \,|\,D\; \text{is a prime divisor and } \int_DF_{n-1}(\chi)< c\int_D\omega^{n-1} \}.
 \end{equation}
Then any finite subset of  $S$ is an exceptional family in the sense of Boucksom~\cite{Boucksom}. In particular, the cardinality of  $S$ is bounded by  the Picard number of $M$.
\end{theorem}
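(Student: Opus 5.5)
The plan is to use Boucksom's characterization of exceptional families. A finite family $D_1,\dots,D_q$ of prime divisors is exceptional if and only if no nonzero effective combination $E=\sum_i a_iD_i$ (with $a_i\ge 0$) has a modified nef class. Recall that a class $\alpha$ is modified nef when $\alpha|_D$ is pseudoeffective for every prime divisor $D$. Once this is established, the bound on the cardinality of $S$ is immediate: Boucksom shows that an exceptional family has linearly independent classes in $H^{1,1}(M,\mathbb R)$, so any finite subset of $S$ has at most $\rho(M)$ elements.

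So I argue by contradiction. Fix $D_1,\dots,D_q\in S$ and suppose $[E]$ is modified nef for some nonzero effective $E=\sum a_iD_i$. Normalize so that $\sum_i a_i\int_{D_i}\omega^{n-1}=1$. Summing the defining inequalities of $S$ gives
\[
\int_E F_{n-1}(\chi)<c.
\]

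The key step is to deform $\chi$ in the direction $-[E]$. Consider the polynomial
\[
f(s):=\int_M\big((\chi-s[E])^n-n(\chi-s[E])^{n-1}\wedge\omega\big).
\]
Its value is $f(0)=\int_M F_n(\chi)\ge 0$, and its derivative is $f'(0)=-n\int_E F_{n-1}(\chi)>-nc$.

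I would then extract a contradiction from the $J$-semistable inequalities~\eqref{$J$-semistablecondition}, applied to $E$ itself and to the divisors $D_j$ in its support. Since $[E]|_{D_j}$ is pseudoeffective, the intersection numbers $\int_{D_j}[E]\wedge\chi^{k}\wedge\omega^{n-2-k}$ are nonnegative. This sign information is what lets me control the quadratic and higher-order terms of $f$.

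The expected mechanism is a Hodge index type inequality. For a modified nef effective class, Boucksom's theory gives $\int_M [E]^2\wedge\beta^{n-2}\ge 0$ for Kähler $\beta$, while $E$ is rigid. Combining this with the Khovanskii--Teissier inequalities for the mixed volumes of $\chi$ and $\omega$, I want to show that an upper bound
\[
\int_E F_{n-1}(\chi)<c\int_E\omega^{n-1}
\]
would force $\int_M F_n(\chi)$ to be smaller than the right-hand side of~\eqref{theboundc} permits. The constant $c$ is visibly tailored for this comparison: its numerator is $2n\int_M F_n(\chi)+\int_M\omega^n$ and its denominator is $8n^2\int_M\chi\wedge\omega^{n-1}$.

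Concretely, I would estimate $\int_M\omega^n$ from above. To do this I expand $\omega$ against $\chi$ and $[E]$ and use $\int_M\chi\wedge\omega^{n-1}$ as the normalizing quantity, arriving at the inequality
\[
\int_M\omega^n\le 8n^2 c\int_M\chi\wedge\omega^{n-1}-2n\int_M F_n(\chi).
\]
Combined with the strict inequality for $E$, this should yield a contradiction.

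The main obstacle is this last comparison. It requires turning the qualitative statement "$[E]$ is modified nef" into a quantitative lower bound on $\int_E F_{n-1}(\chi)$ in terms of global quantities. I would first try to prove directly that, for modified nef effective $E$,
\[
\int_E F_{n-1}(\chi)\ \ge\ \frac{2n\int_M F_n(\chi)+\int_M\omega^n}{8n^2\int_M\chi\wedge\omega^{n-1}}\int_E\omega^{n-1}.
\]
The route would be to combine $J$-semistability on each $D_j$ with a Cauchy--Schwarz inequality in the mixed-volume cone.

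If that fails, the fallback is the analytic route. One takes a current representing $[E]$ with minimal singularities; these have zero Lelong numbers off a countable union of analytic sets, which is possible because $[E]$ is modified nef. One then perturbs $\chi$ to $\chi+\varepsilon\omega$, so that Theorem~\ref{J-stable criterion} applies, and integrates the resulting cone condition against this current. Letting $\varepsilon\to 0$ should give the required inequality.
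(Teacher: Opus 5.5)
Your reduction is the same as the paper's. For a finite $T\subset S$ and a nonzero $\alpha=\sum x_i\{D_i\}$ with $x_i\ge 0$, summing the defining inequalities of $S$ gives $\int_M F_{n-1}(\chi)\wedge\alpha<c\int_M\omega^{n-1}\wedge\alpha$. Exceptionality, and then Boucksom's linear independence, follow once one knows that $\int_M F_{n-1}(\chi)\wedge\beta\ge c\int_M\omega^{n-1}\wedge\beta$ for every modified nef class $\beta$ (Proposition~\ref{keyestimate-intro}). That inequality is the whole content of the theorem, and your proposal does not prove it.

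\textbf{The numerical route.} This route is empty. Your displayed bound $\int_M\omega^n\le 8n^2c\int_M\chi\wedge\omega^{n-1}-2n\int_M F_n(\chi)$ is an equality by the definition of $c$, and it does not involve $E$ at all. You also give no mechanism by which Hodge index or Khovanskii--Teissier inequalities would produce a lower bound with this constant. Note that even for Kähler $\beta$ the paper needs the solvability of a twisted $J$-equation.

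\textbf{The analytic fallback.} This fails for a structural reason. Pairing the smooth form $F_{n-1}(\chi_u)>0$ with any closed current in $[E]$ returns only the cohomological number $\int_M F_{n-1}(\chi)\wedge[E]$. The pointwise cone condition bounds this below only by a constant multiple of $\int_M\omega^{n-1}\wedge[E]$, and that constant degenerates to $0$ as the perturbation is removed. So you get nonnegativity, which semistability already gives. Lelong numbers are invisible in this pairing. The same argument would therefore apply verbatim to $[D_i]$ itself, for which the desired inequality is false when $D_i$ is destabilizing. Moreover, modified nefness does not supply a \emph{positive} current in $[E]$ with vanishing generic Lelong numbers; Boucksom's definition only gives currents $T_\varepsilon\ge-\varepsilon\omega_0$.

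\textbf{What is missing} is a quantitative gap that actually sees $\beta$. The paper first treats a Kähler class $\beta$, using a Kähler representative.
\begin{enumerate}
\item It replaces $\chi$ by $t\chi$, $t>1$, to reach $J$-stability.
\item By Chen's theorem it solves the auxiliary equation $\chi_u^n-n\chi_u^{n-1}\wedge\omega=\frac{1}{2n}\big(C_\beta\,\omega^{n-1}\wedge\beta-\omega^n\big)$ with $F_{n-1}(\chi_u)>0$.
\item It splits $M$ along $E_\delta=\{\sigma_{n-1}(\lambda|1)-\sigma_{n-2}(\lambda|1)<\delta\}$. Off $E_\delta$ one has $F_{n-1}(\chi_u)\wedge\beta\ge\delta\,\omega^{n-1}\wedge\beta$. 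On $E_\delta$ the equation forces $\omega^{n-1}\wedge\beta\le\frac{2n^2\delta}{C_\beta}\,\chi_u\wedge\omega^{n-1}$.
\item For a suitable $\delta$, the set $E_\delta$ therefore carries at most half of the $\omega^{n-1}\wedge\beta$-mass. This produces exactly $c$.
\end{enumerate}

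Modified nefness enters only through Boucksom's description $\beta=\mu_*\widetilde\beta$ of modified Kähler classes, with $\widetilde\beta$ Kähler on a modification $\mu:\widetilde M\to M$ and $\widetilde\alpha$ a Kähler form representing $\widetilde\beta$.
\begin{enumerate}
\item One checks that $(\mu^*(t\chi)+\varepsilon_1\widetilde\alpha,\ \mu^*\omega+\varepsilon_2\widetilde\alpha)$ is still $J$-semistable on $\widetilde M$.
\item One applies the Kähler-class estimate there and lets $\varepsilon_1,\varepsilon_2\to0$, then $t\to1$.
\item One returns to $M$ by the projection formula, and concludes for modified nef classes by density.
\end{enumerate}
Some argument of this kind is needed; as it stands, your proposal does not establish the theorem.
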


Theorem~\ref{t1} has the geometric meaning that under the $J$-semistable condition, the destabilizing prime divisors are finite, and have negative divisorial directions in the sense of Boucksom. Theorem~\ref{t1} is proved by our following estimate.

\begin{proposition}\label{keyestimate-intro}
  Let $(M^n,\omega)$ be a compact K\"ahler manifold of complex
 dimension $n\geq 2$. Let $\chi$ be another K\"ahler form. Suppose that the $J$-semistable condition~\eqref{topintegralnonnegative}--~\eqref{$J$-semistablecondition} holds.
Then for any modified nef class $\beta$, we have \begin{equation}\label{ineq:keyestimate-intro}
    \int_M F_{n-1}(\chi)\wedge\beta \geq c\int_M \omega^{n-1}\wedge \beta,
    \end{equation}
    where \begin{equation}\label{intro-c=}
   c= \frac{2n\displaystyle\int_M\Big(\chi^n-n\chi^{n-1}\wedge\omega\Big)+\int_M \omega^n}{8n^2\displaystyle\int_M\chi\wedge\omega^{n-1}}.\end{equation}
\end{proposition}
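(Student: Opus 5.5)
The plan is to reduce the inequality to a pointwise positivity statement coming from Theorem~\ref{J-stable criterion}, applied to a perturbed pair. Then I would integrate that positivity against positive currents representing $\beta$.

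\textbf{Step 1 (positivity against modified nef classes).} By Boucksom, a modified nef class $\beta$ is a limit of classes $\beta_k$. Each $\beta_k$ contains a closed positive $(1,1)$-current $T_k$ whose Lelong numbers vanish outside a set of codimension $\ge 2$. For a smooth closed $(n-1,n-1)$-form $\Theta$ which is pointwise positive, $\int_M\Theta\wedge T_k\ge 0$, and this pairing depends only on $[\Theta]$ and $\beta_k$. Passing to the limit, it suffices to find, for every small $\varepsilon>0$, a smooth closed form $\Theta_\varepsilon$ such that
\[
\Theta_\varepsilon \geq 0 \ \text{pointwise},\qquad [\Theta_\varepsilon]=[F_{n-1}(\chi)]-(c-\varepsilon)[\omega^{n-1}]+O(\varepsilon)\cdot(\text{fixed classes}).
\]
In fact positivity of the pairing with $\beta$ is the only place modified nefness enters. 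Its only role is to ensure that the currents can be paired with smooth positive $(n-1,n-1)$-forms, so nothing about divisorial Zariski decompositions is needed here.

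\textbf{Step 2 (perturbing to a $J$-stable pair).} I would modify $\omega$ to $\omega_s=(1+s)\omega$ and $\chi$ to $\lambda\chi$, with $\lambda\ge1$ and $s>0$ tuned. Note that $F_p(\lambda\chi)$ taken with respect to $\omega_s$ equals
\[
\lambda^{p-1}\big(F_p(\chi)+(\lambda-1)\chi^p-ps\,\chi^{p-1}\wedge\omega\big).
\]
On every subvariety $V$ the semistable hypothesis gives $\int_VF_p(\chi)\ge0$. The term $(\lambda-1)\int_V\chi^p$ then has to absorb $ps\int_V\chi^{p-1}\wedge\omega$. On the top level, the slack $\int_MF_n(\chi)$ together with $\int_M\omega^n$ (through the choice of $\lambda$) should absorb the analogous loss. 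Choosing $\lambda-1$ and $s$ proportional, with ratio controlled by $\int_M\chi\wedge\omega^{n-1}$, I expect $(\lambda\chi,\omega_s)$ to be $J$-stable after a harmless rescaling. This is where the specific constant $c$ in \eqref{intro-c=} should come from: the factor $2n\int_MF_n(\chi)+\int_M\omega^n$ is the available top-degree room, and $8n^2\int_M\chi\wedge\omega^{n-1}$ comes from mixed-volume (Khovanskii--Teissier type) comparisons between $\chi^{n-1}\wedge\omega$, $\chi\wedge\omega^{n-1}$ and $\omega^n$.

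\textbf{Step 3 (extracting the gain).} Theorem~\ref{J-stable criterion} then yields $u$ with
\[
(\lambda\chi)_u^{n-1}-(n-1)(\lambda\chi)_u^{n-2}\wedge\omega_s>0.
\]
Rewriting this with $\omega_s=\omega+s\omega$ gives a pointwise positive form. Its class is $\lambda^{n-2}$ times
\[
[F_{n-1}(\chi)]+(\lambda-1)[\chi^{n-1}]-(n-1)s[\chi^{n-2}\wedge\omega].
\]
This alone only returns $\int F_{n-1}(\chi)\wedge\beta\ge -(\text{small})$. The gain must instead come from keeping an extra positive piece. Concretely, I would run Step 2 with $\omega_s$ enlarged in the direction needed so that the resulting pointwise inequality dominates $c\,\omega^{n-1}$. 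The cleanest version is to solve a twisted subsolution problem $\chi_u^{n-1}-(n-1)\chi_u^{n-2}\wedge\omega\ge c'\omega^{n-1}$, either via the twisted form of the criterion or via the pointwise inequality $\chi_u^{n-2}\wedge\omega\ge$ (const)$\,\omega^{n-1}$ at points where the cone condition nearly saturates. Either route is then paired with $\beta$ as in Step 1, and $\varepsilon\to0$ is taken at the end.

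The main obstacle is Step 2/3: verifying the subvariety inequalities for the perturbed (or twisted) problem uniformly in $V$ with a constant independent of $V$. In particular, the loss $\int_V\chi^{p-1}\wedge\omega$ must be controlled by $\int_V\chi^p$ using only the semistable inequalities, with no extra positivity assumption such as $J$-bigness. I would first try induction on $p$, using $\int_V F_p(\chi)\ge0$ to bound $\int_V\chi^{p-1}\wedge\omega\le \frac1p\int_V\chi^p$, which is exactly what makes the perturbation uniform.
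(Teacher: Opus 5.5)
There is a genuine gap, and it is in Step 1, so it undermines the whole strategy rather than a detail of Steps 2--3. A smooth closed pointwise nonnegative $(n-1,n-1)$-form $\Theta$ pairs nonnegatively with \emph{every} pseudoeffective class, not only with modified nef ones. Indeed, for any closed positive $(1,1)$-current $T$, the product $T\wedge\Theta$ is a positive measure, and $\int_M T\wedge\Theta$ depends only on $\{T\}$ and $[\Theta]$. So if your forms $\Theta_\varepsilon$ existed, letting $\varepsilon\to0$ would give the inequality for $\beta=\{D\}$ with $D$ any prime divisor, i.e.\ $\int_D F_{n-1}(\chi)\geq c\int_D\omega^{n-1}>0$.

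This fails in exactly the situation the paper is about, since $J$-semistable pairs can have prime divisors with $\int_D F_{n-1}(\chi)=0$. For example, for $n=2$ take $M=\mathrm{Bl}_p\mathbb{P}^2$ with $[\chi]=3H-\tfrac12E$ and $[\omega]=H-\tfrac12E$. Then $[\chi-\omega]=2H$ is nef, $\int_M(\chi^2-2\chi\wedge\omega)=\tfrac{13}{4}>0$, and $\int_E(\chi-\omega)=0$. Your target would make $[\chi-(1+c)\omega]$ nef, which fails on $E$.

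Consequently, neither the perturbed pair of Step 2 nor the ``twisted subsolution'' $\chi_u^{n-1}-(n-1)\chi_u^{n-2}\wedge\omega\geq c'\omega^{n-1}$ of Step 3 can produce the required form in general. The obstacle is not uniformity in $V$. Modified nefness is not a technicality that lets currents be paired with forms. It is precisely what excludes the divisor classes on which the inequality is false, and it must be used in an essential way.

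The paper's gain is $\beta$-dependent and only holds in integral form. For a K\"ahler form $\beta$, it first passes to $(t\chi,\omega)$ with $t>1$; this is your Step 2, and it is the easy part. It then uses Chen's existence theorem to solve the auxiliary twisted equation $\chi_u^n-n\chi_u^{n-1}\wedge\omega=\tfrac1{2n}(C_\beta\,\omega^{n-1}\wedge\beta-\omega^n)$, with $C_\beta$ fixed by cohomology and $F_{n-1}(\chi_u)>0$. Let $E_\delta=\{\sigma_{n-1}(\lambda|1)-\sigma_{n-2}(\lambda|1)<\delta\}$. Off $E_\delta$ one has $F_{n-1}(\chi_u)\wedge\beta\geq\delta\,\omega^{n-1}\wedge\beta$. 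On $E_\delta$ the equation itself forces $\omega^{n-1}\wedge\beta\leq\tfrac{2n^2\delta}{C_\beta}\chi_u\wedge\omega^{n-1}$. So for a suitable $\delta$, the set $E_\delta$ carries at most half of the $\omega^{n-1}\wedge\beta$-mass. This mass-concentration step is where $c$ comes from, not Khovanskii--Teissier comparisons, and the resulting form is not pointwise $\geq c\,\omega^{n-1}$.

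Modified nefness enters only afterwards. By closure one reduces to a modified K\"ahler class $\beta=\mu_*\widetilde\beta$, with $\widetilde\beta$ K\"ahler on a modification $\widetilde M$. One checks that $(\mu^*(t\chi)+\varepsilon_1\alpha,\ \mu^*\omega+\varepsilon_2\alpha)$ is $J$-semistable on $\widetilde M$ and applies the K\"ahler case there. The projection formula, together with $\varepsilon_1,\varepsilon_2\to0$ and $t\to1$, then gives the estimate on $M$. This is how the estimate can hold on all of $\MN$ while failing on the classes of the divisors in $S$.
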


We note that the modified nef estimate~\eqref{ineq:keyestimate-intro} in Proposition~\ref{keyestimate-intro} verifies the $J$-bigness criterion in~\cite[Proposition 29]{Liu2026Boundary}. Consequently, the
$J$-big assumption in Liu's~\cite{Liu2026Boundary} work  can be removed. 

While this paper was in preparation, under the  $J$-big assumption and the smooth boundary cone condition, Liu~\cite{Liu2026Boundary} independently established several results including
the negative definiteness  of an intersection form  and the construction of a singular barrier.

Since Proposition~\ref{keyestimate-intro} can remove the $J$-big assumption in ~\cite{Liu2026Boundary}, the results in ~\cite{Liu2026Boundary} hold under the smooth boundary cone condition. In particular,  there exists a
singular barrier with logarithmic singularities along the destabilizing prime divisors, as expected by Song--Weinkove~\cite[Remark 4.6]{SongWeinkove}. 
Using this singular barrier, we prove the global \(C^0\) estimate for the approximating twisted $J$-equations, and consequently we obtain a bounded Bedford--Taylor
solution. 
  
\begin{theorem}\label{thm:regularity of the weak solution}
  Let $(M^n,\omega)$ be a compact K\"ahler manifold of complex
 dimension $n\geq 3$. Let $\chi$ be another K\"ahler form. Suppose the normalization ~\eqref{normalizationcondition} and the smooth boundary cone condition~\eqref{semisubsolutioncondition} hold. Let \(\{D_1,\ldots,D_m\}\) be the set of all prime divisors satisfying
\begin{equation}
 \int_{D_i} F_{n-1}(\chi)=0.
 \end{equation}
Let \(Z:=\bigcup_{i=1}^m D_i\) be the analytic subvariety of codimension 1. 
Then there exists a function $u_{\infty}\in L^{\infty}(M)\cap C^{\infty}(M\setminus Z)$, with $\text{sup}_M u_{\infty}=0$,  satisfying 
\(\chi+\sqrt{-1}\partial\bar\partial u_\infty
\geq \omega\)
in the sense of currents, and
\begin{equation}\label{eq:Section-limit-equation}
(\chi+\sqrt{-1}\partial\bar\partial u_\infty
)^n=n(\chi+\sqrt{-1}\partial\bar\partial u_\infty
)^{n-1}\wedge\omega
\end{equation}
on \(M\) in the Bedford--Taylor sense. In particular, the equation
holds smoothly on \(M\setminus Z\).
\end{theorem}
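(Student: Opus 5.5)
The plan is to approximate by twisted $J$-equations that are uniformly solvable, obtain a uniform $C^0$ bound from the singular barrier of Liu~\cite{Liu2026Boundary}, and then pass to the limit. Proposition~\ref{keyestimate-intro} removes the $J$-big assumption there, so the barrier is available under the hypotheses of the theorem.

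\textbf{Step 1: the approximating equations.} For $\epsilon\in(0,1]$ set $\chi_\epsilon:=\chi+\epsilon\omega$ in the class $[\chi]+\epsilon[\omega]$. Define the twisting constant $c_\epsilon>0$ by cohomology, so that
\[
\int_M\chi_\epsilon^n=n\int_M\chi_\epsilon^{n-1}\wedge\omega+c_\epsilon\int_M\omega^n ,
\]
and consider the twisted equation
\[
\chi_{\epsilon,u}^n=n\chi_{\epsilon,u}^{n-1}\wedge\omega+c_\epsilon\omega^n,\qquad \chi_{\epsilon,u}:=\chi_\epsilon+\sqrt{-1}\partial\bar\partial u .
\]
\begin{itemize}
\item By the normalization~\eqref{normalizationcondition}, $c_\epsilon\to0$ as $\epsilon\to0$.
\item Adding $\epsilon\omega$ turns the boundary cone condition~\eqref{semisubsolutioncondition} into a strict cone condition for the twisted operator.
\item The Song--Weinkove / Collins--Sz\'ekelyhidi theory for twisted $J$-equations then gives a smooth solution $u_\epsilon$ with $\sup_M u_\epsilon=0$.
\item The equation forces $\chi_{\epsilon,u_\epsilon}>\omega$ pointwise: $\mathrm{tr}_{\chi_u}\omega<n$ yields the strict lower bound once the twist is included.
\end{itemize}

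\textbf{Step 2: the uniform $C^0$ bound (the main obstacle).} I would use Liu's barrier. This is a quasi-psh function $\psi=\sum_i a_i\log|s_i|^2_{h_i}+\text{smooth}$ with $a_i>0$, singular only along $Z$, such that $\chi+\sqrt{-1}\partial\bar\partial\psi$ satisfies a strict cone condition on $M\setminus Z$ with a uniform margin. Its existence rests on Theorem~\ref{t1}: the family $\{D_i\}$ is exceptional, so the negative definite intersection matrix lets one choose the coefficients $a_i$.

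The naive comparison $u_\epsilon\ge\psi-C$ only controls $u_\epsilon$ away from $Z$, and that is not an $L^\infty$ bound. For a global bound I would aim for $\inf u_\epsilon\ge -C$ by combining two inputs:
\begin{itemize}
\item the Sz\'ekelyhidi/Chen ABP-type $C^0$ argument relative to the subsolution $\psi$, applied on $M\setminus Z$ where the minimum of $u_\epsilon-\delta\psi$ is attained (because $\psi\to-\infty$ on $Z$);
\item an energy or capacity estimate, following Ko{\l}odziej-type arguments.
\end{itemize}
The point is that $\chi_{\epsilon,u_\epsilon}^n$ is controlled by $\chi_{\epsilon,u_\epsilon}^{n-1}\wedge\omega$ plus a small term, and this gives $L^p$-type control of the measure. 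The requirement $n\ge3$ presumably enters in this integrability step.

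I expect this step to be the hardest. My first attempt is to apply the ABP estimate to $u_\epsilon-(1-\delta)\psi$ with $\delta$ small and to show that the resulting constant depends only on the barrier's margin and not on $\epsilon$. If $u_\epsilon$ can only be compared with $\psi$ near $Z$, that argument still has to be closed.

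\textbf{Step 3: higher-order estimates and the limit.} With the $C^0$ bound in hand:
\begin{itemize}
\item Local $C^2$ estimates of Chen/Song type on compact sets of $M\setminus Z$, with weights $e^{-A\psi}$, together with Evans--Krylov, give $C^k_{loc}(M\setminus Z)$ bounds.
\item Pass to a subsequence with $u_\epsilon\to u_\infty$ in $C^\infty_{loc}(M\setminus Z)$ and in $L^1$; the $L^\infty$ bound survives, and $\chi+\sqrt{-1}\partial\bar\partial u_\infty\ge\omega$ holds as currents.
\item The normalization $\sup_M u_\infty=0$ follows from Hartogs-type compactness for quasi-psh functions.
\item Since $u_\infty$ is bounded, Bedford--Taylor products are defined and charge no pluripolar set, in particular not $Z$. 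The equation therefore holds on $M$ because it holds smoothly on $M\setminus Z$ and both sides put no mass on $Z$.
\end{itemize}
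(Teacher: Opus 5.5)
Your Steps 1 and 3 follow the paper's route. The gap is Step 2, which you leave open; it is the new content of the theorem, and neither ingredient you list produces an $L^\infty$ bound.

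\textbf{Why your two ingredients do not close Step 2.}
ABP relative to the logarithmic barrier, with weight $1$ or $1-\delta$, gives only $u_\epsilon\ge(1-\delta)\psi-C$. This is exactly the paper's local estimate (Proposition~\ref{prop:Section-local-C0}), and the lower bound tends to $-\infty$ at $Z$ for every $\delta\in[0,1)$.

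The Ko\l odziej route needs a uniform $L^p$ bound, $p>1$, on the density
\[
\chi_{\epsilon,u_\epsilon}^n/\omega^n=\sigma_n(\lambda)=\sigma_{n-1}(\lambda)+c_\epsilon .
\]
But the eigenvalues $\lambda$ are precisely what is uncontrolled near $Z$. The only a priori bound is the cohomological $L^1$ one, which gives no $L^\infty$ control, so this step is circular.

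Without a global bound, your Step 3 yields only $u_\infty\ge\rho-C$. That is not a bounded potential, so the Bedford--Taylor interpretation on all of $M$ is not available. Also, $n\ge3$ enters only through the availability of Liu's barrier, not through any integrability step.

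\textbf{The missing idea: a bounded Jeffres-type barrier.} The paper replaces the logarithmic barrier $\rho$ by a \emph{bounded} one. Off $Z$ one has $\chi+\sqrt{-1}\partial\bar\partial\rho=\widehat\chi$, with $\widehat\chi$ K\"ahler and $F_{n-1}(\widehat\chi)>0$. Take $\psi=e^{a\rho}$ with $a$ small and set $\lambda=ae^{a\rho}$. Then
\[
\widetilde\chi:=\chi+\sqrt{-1}\partial\bar\partial e^{a\rho}=(1-\lambda)\chi+\lambda\widehat\chi+a\lambda\sqrt{-1}\partial\rho\wedge\bar\partial\rho .
\]
This barrier has the following properties.
\begin{enumerate}
\item Convexity of the $J$-cone gives $F_{n-1}(\widetilde\chi)>0$ off $Z$.
\item It also gives $F_n(\widetilde\chi)/\omega^n\ge -C+c\,e^{2a\rho}|\partial\rho|_\omega^2$. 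On a log resolution, $\pi^*\rho=\sum_j b_j\log|w_j|^2+\text{smooth}$, so this tends to $+\infty$ at $Z$ once $2a\sum_j b_j<1$.
\item The same expansion shows $e^{a\rho}$ vanishes along holomorphic discs through $Z$ only like $|t|^{2a\sum_j b_j}$, which is non-Lipschitz. Hence $u_t-e^{a\rho}$ can never have a local minimum on $Z$ (Jeffres' trick).
\end{enumerate}

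\textbf{How the barrier gives the global bound.} Let $x_t$ be the minimum point of $u_t-e^{a\rho}$, so $x_t\notin Z$. At $x_t$ one has $H:=t\omega+\sqrt{-1}\partial\bar\partial(u_t-e^{a\rho})\ge0$. Expanding
\[
F_n(\widetilde\chi+H)=F_n(\widetilde\chi)+\sum_{k=1}^{n-1}\binom nk H^k\wedge F_{n-k}(\widetilde\chi)+H^n
\]
gives $c_t\omega^n=F_n(\eta_t)(x_t)\ge F_n(\widetilde\chi)(x_t)$. Since $c_t\to0$ while $F_n(\widetilde\chi)/\omega^n$ blows up near $Z$, $x_t$ stays in a fixed compact set $K\Subset M\setminus Z$. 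There your ABP step (the local estimate) gives $u_t(x_t)\ge -C_K$. Using $0\le e^{a\rho}\le 1$, this yields $u_t\ge e^{a\rho}-C_K-1\ge -C_K-1$ on all of $M$ (Lemma~\ref{lem:bounded-barrier} and Proposition~\ref{prop:Section-global-C0}).

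\textbf{Minor point in Step 1.} The bound $\chi_{\epsilon,u_\epsilon}>\omega$ comes from $\sum_i\lambda_i^{-1}+c_\epsilon/\prod_i\lambda_i=1$, i.e.\ $\operatorname{tr}_{\chi_{\epsilon,u_\epsilon}}\omega<1$. The bound $<n$ you wrote does not imply $\lambda_i>1$.
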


To the best of our knowledge, the global $C^0$ estimate is new in this setting (even under the assumption of the existence of such a singular barrier; see~\cite{SongWeinkove}). As a further consequence, we also prove a uniform
\(C^0\) estimate for the \(J\)-flow under the smooth boundary cone condition, which confirms the
\(C^0\) estimate expectation by Song--Weinkove~\cite[Remark~4.4]{SongWeinkove}.

The paper is organized as follows. In Section 2 we recall the modified Kähler and modified
nef cones and Boucksom's notion of an exceptional family. In Section 3 we prove Proposition~\ref{keyestimate-intro} and then derive Theorem~\ref{t1} from it. In Section 4 we
study the smooth boundary cone case. Using the  singular barrier, we establish
local and global $C^0$ estimates for the approximating twisted J-equations, obtain local
higher-order estimates outside the destabilizing divisors, and pass to the limit to prove Theorem~\ref{thm:regularity of the weak solution}. We conclude with the uniform $C^0$ estimate for the
$J$-flow.

\section{Preliminaries}
In this section, for the reader's convenience, we briefly recall  Boucksom's~\cite{Boucksom} definition of exceptional family of prime divisors. Fix a Hermitian form $\omega_0$ on a compact manifold $X$.

\begin{definition}\label{def:modified-cones}
A class $\alpha\in H^{1,1}_{\BC}(X,\mathbb{R})$ is \emph{modified K\"ahler} if it contains a K\"ahler current $T$ whose generic Lelong number along any prime divisor is zero. It is \emph{modified nef} if, for any $\varepsilon>0$, it contains a closed current $T_\varepsilon\geq-\varepsilon\omega_0$ whose generic Lelong number along any prime divisor is zero.
\end{definition}
The set of modified Kähler classes is an open convex cone called the modified Kähler cone and
denoted by $\MK$. Similarly, we get a closed convex cone $\MN$, the modified nef cone.  If  $X$ is a K\"ahler manifold, $\MN$ is the closure of $\MK$.

\begin{proposition}[Boucksom~\cite{Boucksom}]\label{prop:modified-kahler}
A class $\alpha$ lies in $\MK$ if and only if there exist a modification $\mu:\widetilde X\to X$ and a K\"ahler class $\widetilde\alpha$ on $\widetilde X$ such that $\alpha=\mu_*\widetilde\alpha$.
\end{proposition}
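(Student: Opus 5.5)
The plan is to prove the two implications separately. The direction ($\Leftarrow$) is a direct computation with the pushed-forward current. The direction ($\Rightarrow$) uses Demailly regularization followed by a resolution of singularities.

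\emph{($\Leftarrow$).} Let $\mu:\widetilde X\to X$ be a modification and $\widetilde\omega$ a Kähler form representing $\widetilde\alpha$. Put $T:=\mu_*\widetilde\omega$. This is a closed positive $(1,1)$-current, and since $\mu_*$ commutes with $d$ and with taking cohomology classes, $T\in\mu_*\widetilde\alpha=\alpha$.
\begin{itemize}
\item By compactness of $\widetilde X$ there is $C>0$ with $\mu^*\omega_0\le C\widetilde\omega$.
\item Since $\mu$ is biholomorphic off a proper analytic set, which carries no mass for smooth forms, we have $\mu_*\mu^*\omega_0=\omega_0$.
\item Hence $T\ge C^{-1}\omega_0$, so $T$ is a Kähler current.
\end{itemize}
For the Lelong numbers, let $E\subset\widetilde X$ be the exceptional locus. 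Because $X$ is smooth, hence normal, Zariski's main theorem gives $\operatorname{codim}\mu(E)\ge 2$. So any prime divisor $D\subset X$ is not contained in $\mu(E)$. At a generic point of $D$, the map $\mu$ is a local biholomorphism, so $T$ is smooth there and its Lelong number vanishes. Thus the generic Lelong number of $T$ along $D$ is zero, and $\alpha\in\MK$.

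\emph{($\Rightarrow$).} Let $T\in\alpha$ be a Kähler current with $T\ge 2\varepsilon_0\omega_0$ and zero generic Lelong number along every prime divisor.

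First, apply Demailly's regularization theorem. It gives a current $T'\in\alpha$ with analytic singularities, $T'\ge\varepsilon_0\omega_0$, and $\nu(T',x)\le\nu(T,x)$ for all $x$. Locally $T'=dd^c\big(c\log\sum_j|f_j|^2\big)+\text{smooth}$, and its singular set is the zero locus $Z$ of an ideal sheaf $\mathcal I$. If $Z$ had a divisorial component $D$, the Lelong number of $T'$ would be positive generically along $D$, and hence so would that of $T$. This is a contradiction, so $\operatorname{codim}Z\ge2$.

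Second, take a log resolution $\mu:\widetilde X\to X$ of $\mathcal I$, obtained by a sequence of blow-ups with smooth centers lying over $Z$. Then:
\begin{itemize}
\item we can write $\mu^*T'=\theta+[F]$, where $F$ is an effective $\mu$-exceptional divisor and $\theta$ is a smooth closed form with $\theta\ge\varepsilon_0\mu^*\omega_0$;
\item $\mu_*[F]=0$, because $\mu$ is an isomorphism off $\mu^{-1}(Z)$ and $\operatorname{codim}Z\ge2$;
\item consequently $\mu_*\{\theta\}=\mu_*\mu^*\alpha=\alpha$.
\end{itemize}

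Third, $\theta$ is only semipositive, degenerating along the exceptional set, so it must be perturbed. Because $\mu$ is a composition of blow-ups with smooth centers, there is an effective $\mu$-exceptional divisor $E'$ with $-E'$ relatively ample. Concretely, there is a smooth form $\eta\in-\{E'\}$ and a constant $A>0$ such that $A\mu^*\omega_0+\eta>0$. Then for $\delta>0$ small, $\theta+\delta\eta\ge(\varepsilon_0-\delta A)\mu^*\omega_0+\delta(A\mu^*\omega_0+\eta)>0$. So $\widetilde\alpha:=\{\theta\}-\delta\{E'\}$ is a Kähler class, and $\mu_*\widetilde\alpha=\alpha$ since $\mu_*E'=0$.

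\emph{Main obstacle.} The delicate step is this last perturbation: justifying the existence of the relatively anti-ample exceptional divisor and the uniform positivity estimate on the iterated blow-up. I would handle it by induction on the number of blow-ups, using at each stage that $-E_k$ restricts to $\mathcal O(1)$ on the fibres of the exceptional divisor $E_k$.
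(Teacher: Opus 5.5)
Your proposal is correct and follows essentially the same approach as the source: the paper gives no proof of its own and simply cites Boucksom, whose argument also pushes forward a Kähler form for ($\Leftarrow$), using that a modification of the smooth (hence normal) $X$ is an isomorphism off a codimension-$2$ set. For ($\Rightarrow$) Boucksom likewise combines Demailly's regularization to analytic singularities, a log resolution whose divisorial part is $\mu$-exceptional because generic Lelong numbers along prime divisors vanish, and the Demailly--P\u{a}un perturbation by a relatively anti-ample exceptional divisor, which your induction on the blow-ups justifies in the standard way.
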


\begin{definition}
    A finite family of prime divisors is called \emph{exceptional} if the convex cone generated by their classes meets $\MN$ only at the origin. 
    \end{definition}
    Boucksom proved that the classes of an exceptional family are linearly independent; see Proposition~3.11(iii) of~\cite{Boucksom}. Consequently, the cardinality of such a family is bounded by the Picard number.

\section{The destabilizing prime divisors for the $J$-equation}
In this section, we first prove Proposition~\ref{keyestimate-intro}, and then derive Theorem~\ref{t1} from it.  To begin with, we prove the following preliminary lemma.  Compared with Proposition~\ref{keyestimate-intro}, it considers the
special case that \(\beta\) is a Kähler class rather than a
modified nef class.

\begin{lemma}\label{estimateforKahlerclass}
  Let $(M^n,\omega)$ be a compact K\"ahler manifold of complex
 dimension $n\geq 2$. Let $\chi$ be another K\"ahler form. Suppose that the $J$-semistable condition ~\eqref{topintegralnonnegative}--~\eqref{$J$-semistablecondition} holds.
Then for any K\"ahler class $\beta$, we have $$\int_M F_{n-1}(\chi)\wedge\beta \geq c\int_M \omega^{n-1}\wedge \beta,$$ where \begin{equation}\label{c=}
  c= \frac{2n\displaystyle\int_M\Big(\chi^n-n\chi^{n-1}\wedge\omega\Big)+\int_M \omega^n}{8n^2\displaystyle\int_M\chi\wedge\omega^{n-1}}.\end{equation}
\end{lemma}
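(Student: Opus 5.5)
The plan is to turn the $J$-semistable hypothesis into a $J$-stable one by shrinking $\omega$ slightly, apply Song's criterion (Theorem~\ref{J-stable criterion}) to get a representative $\chi_u\in[\chi]$ satisfying a cone condition, and then read off the inequality by pairing with a K\"ahler form in $\beta$. Since $F_{n-1}(\chi)\wedge\beta$ and $\omega^{n-1}\wedge\beta$ are closed forms whose integrals depend only on cohomology classes, any such representative $\chi_u$ may be used in place of $\chi$.

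\emph{Step 1: reduction to a stable pair.} For $0<\delta<1$ put $\omega_\delta:=(1-\delta)\omega$. For every $p$-dimensional subvariety $V$ with $1\le p\le n-1$,
\[
\int_V\big(\chi^p-p\chi^{p-1}\wedge\omega_\delta\big)=\int_V F_p(\chi)+p\delta\int_V\chi^{p-1}\wedge\omega>0,
\]
and the top-degree integral is likewise $\ge0$ by \eqref{topintegralnonnegative}. Hence $(\chi,\omega_\delta)$ is $J$-stable. Theorem~\ref{J-stable criterion} then gives a K\"ahler form $\chi_\delta=\chi+\sqrt{-1}\partial\bar\partial u_\delta$ with
\[
\chi_\delta^{n-1}-(n-1)(1-\delta)\chi_\delta^{n-2}\wedge\omega>0 .
\]

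\emph{Step 2: pointwise bound.} Fix a K\"ahler form $\beta$ in the class and diagonalize $\chi_\delta$ with respect to $\omega$ at a point, with eigenvalues $\lambda_1,\dots,\lambda_n$. The cone condition says $\sum_{j\ne i}\lambda_j^{-1}<\tfrac{1}{1-\delta}$ for every $i$. The coefficient of $F_{n-1}(\chi_\delta)$ in the $i$-th direction is $\prod_{j\ne i}\lambda_j\big(1-\sum_{j\ne i}\lambda_j^{-1}\big)$, so the only possible negativity is $O(\delta)\prod_{j\ne i}\lambda_j$. I would write
\[
F_{n-1}(\chi_\delta)\ \ge\ \theta\,\omega^{n-1}\;-\;C\delta\,\chi_\delta^{n-2}\wedge\omega\;-\;(\text{terms controlled by }\theta\,\chi_\delta\wedge\omega^{n-2}),
\]
splitting into directions where some $\lambda_j$ is small, so that $\sum_{j\ne i}\lambda_j^{-1}$ is well below $1$, and directions where all $\lambda_j$ are comparable to $1$. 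Wedging with $\beta>0$ and integrating, every error term becomes a cohomological quantity, for instance $\delta\int\chi^{n-2}\wedge\omega\wedge\beta$, which vanishes as $\delta\to0$.

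\emph{Step 3: the constant.} To obtain the explicit gap $c$ rather than merely $\ge0$, I would use the top-degree information. Wedging the cone inequality with $\omega$ and integrating, \eqref{topintegralnonnegative} together with the identity $\int F_n(\chi)=\int(\chi^n-n\chi^{n-1}\wedge\omega)$ should force the region where $\chi_\delta$ is "large" (where $F_{n-1}$ already dominates $\omega^{n-1}$) to carry definite mass. That mass is measured by $\int\omega^n$ against the normalizing quantity $\int\chi\wedge\omega^{n-1}$, and the denominator $8n^2\int\chi\wedge\omega^{n-1}$ in \eqref{c=} suggests exactly this averaging. Finally let $\delta\to0$; since $c$ is independent of $\delta$, the inequality passes to the limit, and it extends to all K\"ahler classes $\beta$.

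\emph{Main obstacle.} The main obstacle is Step~3: extracting a uniform positive lower bound from a pointwise cone condition that degenerates as $\delta\to0$, since near-destabilizing directions give no pointwise positivity. The first thing I would try is a convexity/Cauchy--Schwarz argument on the eigenvalues, using $\sigma_{n-1}\ge(n-1)(1-\delta)\sigma_{n-2}$ to bound $\chi_\delta^{n-2}\wedge\omega\wedge\beta$ by $\chi_\delta\wedge\omega^{n-2}\wedge\beta$. The aim would be to get a bound of the form $\int F_{n-1}\wedge\beta\ge \tfrac12\int F_{n-1}\wedge\beta+\tfrac{1}{8n^2}(\cdots)\int\omega^{n-1}\wedge\beta$ and absorb the first term.
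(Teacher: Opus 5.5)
Your Step 1 is fine: shrinking $\omega$ to $(1-\delta)\omega$ plays the same role as the paper's replacement of $\chi$ by $t\chi$ with $t>1$. But there is a genuine gap at Step 3, and it is not a technical one.

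A single representative $\chi_\delta$ produced by Song's criterion only satisfies $F_{n-1}(\chi_\delta)>-(n-1)\delta\,\chi_\delta^{n-2}\wedge\omega$. Wedging with $\beta$ and letting $\delta\to0$ therefore gives no more than $\int_M F_{n-1}(\chi)\wedge\beta\ge0$. To upgrade this to $c\int_M\omega^{n-1}\wedge\beta$ with $c$ independent of $\beta$, you need to know that $\beta$ puts only a controlled fraction of its mass where $F_{n-1}(\chi_\delta)$ is nearly degenerate. Such a region is unavoidable: along any divisor $D$ with $\int_D F_{n-1}(\chi)=0$, the restriction of $F_{n-1}(\chi_\delta)$ is $O(\delta)$-small in $L^1(D)$. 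A representative of $[\chi]$ chosen without reference to $\beta$ carries no information about where $\beta$ sits, and your Steps 2--3 offer no way to obtain it.

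Your proposed source of the gap is also misidentified. The constant $c$ stays positive when $\int_M F_n(\chi)=0$, where the top-degree inequality has no slack at all. Moreover, a mass bound measured against $\omega^n$ says nothing about $\omega^{n-1}\wedge\beta$ on the bad set. The absorption inequality in your last sentence has no mechanism behind it.

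The missing idea is a mass-concentration argument with a representative of $[\chi]$ adapted to $\beta$. The paper defines $C_\beta>0$ by
\[
\int_M F_n(\chi)=\frac{1}{2n}\int_M\bigl(C_\beta\,\omega^{n-1}\wedge\beta-\omega^n\bigr),
\]
and then solves the auxiliary equation
\[
\chi_u^n-n\chi_u^{n-1}\wedge\omega=\frac{1}{2n}\bigl(C_\beta\,\omega^{n-1}\wedge\beta-\omega^n\bigr).
\]
It does so using Chen's existence theorem for twisted $J$-equations, with the subsolution supplied by Song's criterion. The solution $\chi_u$ lies in the $J$-cone, so its eigenvalues satisfy $\lambda_1\ge\dots\ge\lambda_n>1$. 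The argument then runs as follows:
\begin{itemize}
\item On $E_\kappa:=\{\sigma_{n-1}(\lambda|1)-\sigma_{n-2}(\lambda|1)<\kappa\}$, the equation gives $\frac{C_\beta}{2n}\,\omega^{n-1}\wedge\beta\le\kappa\lambda_1\,\omega^n\le \kappa\,n\,\chi_u\wedge\omega^{n-1}$. Here $\sigma_{n-1}(\lambda|1)>1$ absorbs the $\frac{1}{2n}$ term.
\item Integrating, $\int_{E_\kappa}\omega^{n-1}\wedge\beta\le\frac{2n^2\kappa}{C_\beta}\int_M\chi\wedge\omega^{n-1}$, which is a purely cohomological bound.
\item Off $E_\kappa$, one has $F_{n-1}(\chi_u)\wedge\beta\ge\kappa\,\omega^{n-1}\wedge\beta$, because $\lambda_1$ is the largest eigenvalue.
\item Choosing $\kappa$ so that $E_\kappa$ carries at most half of $\int_M\omega^{n-1}\wedge\beta$ yields $\int_M F_{n-1}(\chi)\wedge\beta\ge\frac{\kappa}{2}\int_M\omega^{n-1}\wedge\beta$. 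This is exactly $c\int_M\omega^{n-1}\wedge\beta$.
\end{itemize}
This also explains the shape of $c$. The $\int_M\omega^n$ in the numerator comes from the $-\omega^n$ on the right-hand side of the auxiliary equation. The $8n^2\int_M\chi\wedge\omega^{n-1}$ in the denominator comes from $\lambda_1\le\operatorname{tr}_\omega\chi_u$ and the halving.
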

\begin{proof}
We use a mass concentration method, motivated by ~\cite{demailly2004numerical, Chen}. The main idea is to  construct an auxiliary equation based on~\cite{Chen, Song} and estimate the integral by dividing into two subsets.
   
   First note that, by continuity, it suffices to prove that for any K\"ahler class $\beta$ and any $t>1$, we have $$\int_M F_{n-1}(t\chi)\wedge\beta \geq c_t\int_M \omega^{n-1}\wedge \beta,$$ where \begin{equation}\label{c=}
 c_t= \frac{2n\displaystyle\int_M\Big((t\chi)^n-n(t\chi)^{n-1}\wedge\omega\Big)+\int_M \omega^n}{8n^2\displaystyle\int_Mt\chi\wedge\omega^{n-1}}.\end{equation}
 But for any $t>1$, $(t\chi,\omega)$ satisfies the $J$-stable condition in Theorem~\ref{J-stable criterion}  because of the $J$-semistable assumption. So without loss of generality, from now on we may assume that $(\chi,\omega)$ itself satisfies the   $J$-stable condition. 
 
 Then  by Theorem~\ref{J-stable criterion}, we can obtain a K\"ahler form $\chi_\varphi\in[\chi]$, such that 
 \begin{equation}\label{eq:subsolution-inchi}
     \chi_\varphi^{n-1}-(n-1)\chi_\varphi^{n-2}\wedge\omega>0.
 \end{equation}
 Choose a Kähler representative of the class \(\beta\), still
denoted by \(\beta\). Let $C_{\beta}>0$ be a constant determined by \begin{equation}\label{def:Cbeta}
    \int_M\left(\chi^n-n\chi^{n-1}\wedge\omega\right)=\frac{1}{2n}\int_M\left(C_{\beta}\omega^{n-1}\wedge\beta-\omega^n\right).\end{equation}
  Since $$\frac{1}{2n}\frac{(C_{\beta}\omega^{n-1}\wedge\beta-\omega^n)}{\omega^{n}}>-\frac{1}{2n},$$ and the K\"ahler form $\chi_\varphi\in[\chi]$ satisfies~\eqref{eq:subsolution-inchi}, hence by Theorem 1.14 in~\cite{Chen}, the following auxiliary equation  admits a smooth solution $u$:
\begin{equation}\label{auxiliary equation}
\chi_u^n-n\chi_u^{n-1}\wedge\omega=\frac{1}{2n}(C_{\beta}\omega^{n-1}\wedge\beta-\omega^n),
\end{equation} with 
\begin{equation}\label{subsolution of auxiliary eq}
    \chi_u>0,\quad F_{n-1}(\chi_u):=\chi_u^{n-1}-(n-1)\chi_u^{n-2}\wedge\omega>0.
\end{equation}
Let $\lambda=(\lambda_1,,,\lambda_n)$ be the eigenvalue of $\chi_u$ with respect to $\omega$. 
Then at any fixed point, equation~\eqref{auxiliary equation} can be written as 
\begin{equation}\label{pointwiseauxiliaryequation}
    \sigma_n(\lambda)-\sigma_{n-1}(\lambda)=\frac{1}{2n}(C_{\beta}\frac{\omega^{n-1}\wedge\beta}{\omega^n}-1),
\end{equation} 
and the cone condition~\eqref{subsolution of auxiliary eq} can be written as 
\begin{equation}\label{pointwisesubsolution}
    \lambda_i>0,\quad\sigma_{n-1}(\lambda|i)-\sigma_{n-2}(\lambda|i)>0, \quad\text{every} \;1\leq i\leq n.
\end{equation}

Without loss of generality, we may assume that $\lambda_1\geq \lambda_2\geq\cdots\geq\lambda_n$.  We define the set
$$E_{\delta}:=\{x\in M \,|\, \sigma_{n-1}(\lambda|1)-\sigma_{n-2}(\lambda|1)<\delta \},$$
where $\delta>0$ is a constant that will be determined later.
Then on $M\backslash E_{\delta}$, by monotonicity we have $$\sigma_{n-1}(\lambda|i)-\sigma_{n-2}(\lambda|i)\geq \sigma_{n-1}(\lambda|1)-\sigma_{n-2}(\lambda|1)\geq\delta, \quad\text{every}\; 1\leq i\leq n. $$
Hence on $M\backslash E_{\delta}$, $$\Big(\chi_u^{n-1}-(n-1)\chi_u^{n-2}\wedge\omega\Big)\wedge\beta \geq \delta \omega^{n-1}\wedge\beta.$$
Hence \begin{equation}\begin{aligned}\label{mainestimate}
&\int_M F_{n-1}(\chi)\wedge\beta=\int_MF_{n-1}(\chi_u)\wedge\beta\geq\int_{M\backslash E_{\delta}}  F_{n-1}(\chi_u)\wedge\beta\\
    \geq&\int_{M\backslash E_{\delta}}\delta\omega^{n-1}\wedge\beta=\delta\Big(\int_M\omega^{n-1}\wedge\beta- \int_{E_{\delta}}\omega^{n-1}\wedge\beta\Big).
\end{aligned}    
\end{equation}
Next we estimate $\displaystyle\int_{E_{\delta}}\omega^{n-1}\wedge\beta$.
By Equation~\eqref{pointwiseauxiliaryequation}, on $E_{\delta}$ we have
\begin{equation}\begin{aligned}\label{onEdelta}
   \frac{1}{2n}C_{\beta}\frac{\omega^{n-1}\wedge\beta}{\omega^n}&= \lambda_1 \sigma_{n-1}(\lambda|1)-\Big(\lambda_1\sigma_{n-2}(\lambda|1)+\sigma_{n-1}(\lambda|1)\Big)+\frac{1}{2n}\\
   &\leq \delta\lambda_1-\sigma_{n-1}(\lambda|1)+\frac{1}{2n}  \\
  &\leq\delta\lambda_1\leq \delta\sigma_1(\lambda)=n\delta\frac{\chi_u\wedge\omega^{n-1}}{\omega^n}.
\end{aligned}
\end{equation}
The second inequality of~\eqref{onEdelta} uses the fact that $\lambda_i>1$ implied by ~\eqref{pointwisesubsolution}. Integrating ~\eqref{onEdelta} over $E_{\delta}$, we obtain 
\begin{equation}\label{TakeintegralonE}
\begin{aligned}
&\int_{E_{\delta}}\omega^{n-1}\wedge\beta\leq \delta\frac{2n^2}{C_{\beta}}\int_{E_{\delta}}\chi_u\wedge\omega^{n-1}\\
&\leq \delta\frac{2n^2}{C_{\beta}}\int_M \chi_u\wedge\omega^{n-1}=\delta\frac{2n^2}{C_{\beta}}\int_M \chi\wedge\omega^{n-1}.
\end{aligned}
\end{equation}
To estimate ~\eqref{mainestimate}, we choose $\delta>0$ be the constant such that
\begin{equation}
    \delta\frac{2n^2}{C_{\beta}}\int_M \chi\wedge\omega^{n-1}=\frac{1}{2}\int_M\omega^{n-1}\wedge\beta,
    \end{equation}
    that is, \begin{equation}\label{def:delta}    
    \delta=\frac{C_{\beta}}{4n^2}\frac{\displaystyle\int_M\omega^{n-1}\wedge\beta}{\displaystyle\int_M \chi\wedge\omega^{n-1}}.
    \end{equation} 
    Then  inequality~\eqref{TakeintegralonE} implies
    \begin{equation}\label{estimate1/2}
\int_{E_{\delta}}\omega^{n-1}\wedge\beta\leq \frac{1}{2}\int_M \omega^{n-1}\wedge\beta.
\end{equation}
    Combining~\eqref{mainestimate},~\eqref{def:delta} and ~\eqref{estimate1/2}, we obtain  
\begin{equation}\label{finalestimate1}
    \int_M F_{n-1}(\chi)\wedge\beta \geq  \frac{\delta}{2}\int_M\omega^{n-1}\wedge\beta=  \frac{C_{\beta}}{8n^2}\frac{\Big(\displaystyle\int_M\omega^{n-1}\wedge\beta\Big)^2}{\displaystyle\int_M \chi\wedge\omega^{n-1}}.
\end{equation}
Finally, by the definition of $C_{\beta}$, see ~\eqref{def:Cbeta}, we get
\begin{equation}\label{finalestimate2}
    \int_M F_{n-1}(\chi)\wedge\beta \geq    \frac{2n\displaystyle\int_M\Big(\chi^n-n\chi^{n-1}\wedge\omega\Big)+\int_M \omega^n}{8n^2\displaystyle\int_M\chi\wedge\omega^{n-1}} \cdot\displaystyle\Big(\int_M\omega^{n-1}\wedge\beta\Big).
\end{equation}
This proves the Lemma.
\end{proof}

Now we prove Proposition~\ref{keyestimate-intro}.
\begin{proof}[Proof of Proposition~\ref{keyestimate-intro}]
    Since the modified nef cone is the closure of the modified Kähler
cone, and both sides of the desired inequality depend continuously
on \(\beta\), it suffices to prove the estimate when \(\beta\) is a
modified Kähler class.
By Proposition~\ref{prop:modified-kahler}, there exists a modification  $\mu:\widetilde M\to M$ and a K\"ahler class $\widetilde\beta$ on $\widetilde M$ such that $\beta=\mu_*\widetilde\beta$. Let $\alpha$ be a K\"ahler form in the class $\widetilde\beta$.

First we  claim that the K\"ahler pair $(\mu^*(t\chi)+\varepsilon_1\alpha,\mu^*\omega+\varepsilon_2\alpha)$  on $\widetilde M$ is $J$-semistable, for any $t>1$, $\varepsilon_1>0$ and sufficiently small $0<\varepsilon_2<<\text{min}\{t-1,\varepsilon_1\}$ depending on $t,\varepsilon_1$.  By the assumption that $(\chi,\omega)$ is $J$-semistable, for any $t>1$, the pair $(t\chi,\omega)$ on $M$ is $J$-stable, so by Theorem~\ref{J-stable criterion}, there exists a smooth function $u$ such that  $t\chi_u$ is a K\"ahler form and $$(t\chi_u)^{n-1}-(n-1)(t\chi_u)^{n-2}\wedge\omega>0,$$
which implies that  \begin{equation}\label{(p,p)>delta_t}
    (t\chi_u)^p-p(t\chi_u)^{p-1}\wedge\omega>0,\quad \text{for any}\;1\leq p\leq n-1,
    \end{equation} as a $(p,p)$-form in the strong sense. Since $\widetilde M$ is compact and $\alpha$ is a K\"ahler form, we can find a small constant $\delta_t>0$ depending  on $t$, such that 
    \begin{equation}\label{alpha>delta_tchi}
\alpha>\delta_t\mu^*(t\chi_u)\end{equation}
as a $(1,1)$-form.
Denote the $(p,p)$-form $$M_p:=p(\mu^*(t\chi_u)+\varepsilon_1\alpha)^{p-1}\wedge(\varepsilon_2\alpha).$$
Write the $(p,p)$-form
\begin{equation}\label{claimfortchi}
\begin{aligned}
&(\mu^*(t\chi_u)+\varepsilon_1\alpha)^p-p(\mu^*(t\chi_u)+\varepsilon_1\alpha)^{p-1}\wedge(\mu^*\omega+\varepsilon_2\alpha)\\
=\sum&_{j=0}^{p-1}(\varepsilon_1\alpha)^j \wedge\Big(\binom{p}{j}(\mu^*(t\chi_u))^{p-j}-p\binom{p-1}{j}(\mu^*(t\chi_u))^{p-1-j}\wedge(\mu^*\omega)\Big)+(\varepsilon_1\alpha)^p-M_p\\
=\sum&_{j=0}^{p-1}(\varepsilon_1\alpha)^j \wedge\binom{p}{j}\Big((\mu^*(t\chi_u))^{p-j}-(p-j)(\mu^*(t\chi_u))^{p-1-j}\wedge(\mu^*\omega)\Big)+(\varepsilon_1\alpha)^p-M_p
\end{aligned}
\end{equation}
By inequality~\eqref{(p,p)>delta_t}, for every $1\leq p\leq n-1 $, we have
\begin{equation}\label{p,pformpostive}
    \sum_{j=0}^{p-1}(\varepsilon_1\alpha)^j \wedge\binom{p}{j}\Big((\mu^*(t\chi_u))^{p-j}-(p-j)(\mu^*(t\chi_u))^{p-1-j}\wedge(\mu^*\omega)\Big)\geq 0,
    \end{equation}
    Let $0<\varepsilon_2<\mathop{\text{min}}\limits_{1\leq p\leq n}\{\frac{\varepsilon_1^p}{p(\delta_t^{-1}+\varepsilon_1)^{p-1}}\}$, then  by the construction~\eqref{alpha>delta_tchi}, for every $1\leq p\leq n$, we have \begin{equation}\label{epsilon_1alpha)^p-M_p}
    (\varepsilon_1\alpha)^p-M_p=(\varepsilon_1\alpha)^p-p(\mu^*(t\chi_u)+\varepsilon_1\alpha)^{p-1}\wedge(\varepsilon_2\alpha)\geq 0.
    \end{equation}
    Combining ~\eqref{claimfortchi}-\eqref{epsilon_1alpha)^p-M_p},  on any $p$-dimensional subvariety $Z\subset \widetilde M$ with $1\leq p\leq n-1$,  we have
\begin{equation}\label{verifyJnef}
\begin{aligned}
    &\int_Z(\mu^*(t\chi)+\varepsilon_1\alpha)^p-p(\mu^*(t\chi)+\varepsilon_1\alpha)^{p-1}\wedge(\mu^*\omega+\varepsilon_2\alpha)\\
    =&\int_Z(\mu^*(t\chi_u)+\varepsilon_1\alpha)^p-p(\mu^*(t\chi_u)+\varepsilon_1\alpha)^{p-1}\wedge(\mu^*\omega+\varepsilon_2\alpha)\geq 0.
    \end{aligned}
\end{equation}
Combining ~\eqref{claimfortchi}-\eqref{epsilon_1alpha)^p-M_p}, we also obtain
\begin{equation}\label{topintegralonwidetildeM}
\begin{aligned}
&\int_{\widetilde M}(\mu^*(t\chi_u)+\varepsilon_1\alpha)^n-n(\mu^*(t\chi_u)+\varepsilon_1\alpha)^{n-1}\wedge(\mu^*\omega+\varepsilon_2\alpha)\\
   \geq&\int_{\widetilde M}\sum_{j=0}^{n-1}(\varepsilon_1\alpha)^j \wedge\binom{n}{j}\Big((\mu^*(t\chi_u))^{n-j}-(n-j)(\mu^*(t\chi_u))^{n-1-j}\wedge(\mu^*\omega)\Big)\\
    \geq&\int_{\widetilde M} (\mu^*(t\chi))^n-n(\mu^*(t\chi))^{n-1}\wedge\mu^*\omega\\
    =&\int_M (t\chi)^n-n(t\chi)^{n-1}\wedge\omega\geq 0.
    \end{aligned}
    \end{equation}
This proves the claim.

For $t> 1,\varepsilon_1> 0$ and $\varepsilon_2> 0$, we denote $\widetilde\chi_{t,\varepsilon_1}=\mu^*(t\chi)+\varepsilon_1\alpha$, $\widetilde\omega_{\varepsilon_2}=\mu^*\omega+\varepsilon_2\alpha.$ Then by the above claim and by Lemma~\ref{estimateforKahlerclass}, for any $t>1$, $\varepsilon_1>0$ and  $\varepsilon_2=\frac{1}{2}\mathop{\text{min}}\limits_{1\leq p\leq n}\{\frac{\varepsilon_1^p}{p(\delta_t^{-1}+\varepsilon_1)^{p-1}}\}$, we have
\begin{equation}\label{estiamte:t,epsion1,2}
    \int_{\widetilde M}\Big(\widetilde\chi_{t,\varepsilon_1}^{n-1}-(n-1)\widetilde\chi_{t,\varepsilon_1}^{n-2}\wedge\widetilde\omega_{\varepsilon_2} \Big)\wedge\widetilde\beta \geq c_{t,\varepsilon_1,\varepsilon_2}\int_{\widetilde M} \widetilde\omega_{\varepsilon_2}^{n-1}\wedge \widetilde\beta,
    \end{equation}where \begin{equation}\label{c_t=}
 c_{t,\varepsilon_1,\varepsilon_2}= \frac{2n\displaystyle\int_{\widetilde M}\Big(\widetilde\chi_{t,\varepsilon_1}^n-n\widetilde\chi_{t,\varepsilon_1}^{n-1}\wedge\widetilde\omega_{\varepsilon_2}\Big)+\int_{\widetilde M}\widetilde\omega_{\varepsilon_2}^n}{8n^2\displaystyle\int_{\widetilde M} \widetilde\chi_{t,\varepsilon_1}\wedge\widetilde\omega_{\varepsilon_2}^{n-1}}.\end{equation}

    First choose \(t>1\), let \(\varepsilon_1\to0^+\), and then $t\to 1^+$, while $\varepsilon_2=\frac{1}{2}\mathop{\text{min}}\limits_{1\leq p\leq n}\{\frac{\varepsilon_1^p}{p(\delta_t^{-1}+\varepsilon_1)^{p-1}}\}\to 0^+$, then by the continuity of integral, ~\eqref{estiamte:t,epsion1,2} implies
\begin{equation}
\begin{aligned}
   \int_{M}\Big(\chi^{n-1}-(n-1)\chi^{n-2}\wedge\omega \Big)\wedge\beta&=\int_{\widetilde M}\Big((\mu^*\chi)^{n-1}-(n-1)(\mu^*\chi)^{n-2}\wedge(\mu^*\omega) \Big)\wedge\widetilde\beta \\ &\geq c\int_{\widetilde M} (\mu^*\omega)^{n-1}\wedge \widetilde\beta=c\int_M\omega^{n-1}\wedge\beta.
\end{aligned}
\end{equation}

where $$c= \frac{2n\displaystyle\int_{\widetilde M}\Big((\mu^*\chi)^n-n(\mu^*\chi)^{n-1}\wedge\mu^*\omega\Big)+\int_{\widetilde M}(\mu^*\omega)^n}{8n^2\displaystyle\int_{\widetilde M}\mu^*\chi\wedge(\mu^*\omega)^{n-1}} =\frac{2n\displaystyle\int_M\Big(\chi^n-n\chi^{n-1}\wedge\omega\Big)+\int_M \omega^n}{8n^2\displaystyle\int_M\chi\wedge\omega^{n-1}}.$$
This completes the proof.
    \end{proof}

Using the estimate in Proposition~\ref{keyestimate-intro}, now we prove Theorem~\ref{t1}. We restate it here for convenience.
   \begin{theorem}
    \label{mainthm:exceptionalfamily}
 Let $(M^n,\omega)$ be a compact K\"ahler manifold of complex
 dimension $n\geq 2$. Let $\chi$ be another K\"ahler form.  Suppose that the $J$-semistable condition ~\eqref{topintegralnonnegative}--~\eqref{$J$-semistablecondition} holds. Set the positive constant $$c= \frac{2n\displaystyle\int_M\Big(\chi^n-n\chi^{n-1}\wedge\omega\Big)+\int_M \omega^n}{8n^2\displaystyle\int_M\chi\wedge\omega^{n-1}},$$ and let \begin{equation}
     S:=\{ D \,|\,D\; \text{is a prime divisor and } \int_DF_{n-1}(\chi)< c\int_D\omega^{n-1} \}.
 \end{equation}
Then any finite subset of  $S$ is an exceptional family  in the sense of Boucksom~\cite{Boucksom}. In particular, the cardinality of  $S$ is bounded by  the Picard number of $M$.
\end{theorem}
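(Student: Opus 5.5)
To prove the theorem, take a finite subset $\{D_1,\dots,D_k\}\subset S$ and argue by contradiction. Suppose the convex cone generated by $[D_1],\dots,[D_k]$ meets $\MN$ at some nonzero class. Then there are coefficients $a_i\geq 0$, not all zero, such that
\[
\beta:=\sum_{i=1}^k a_i[D_i]\in\MN,\qquad \beta\neq 0 .
\]
We then apply Proposition~\ref{keyestimate-intro} to this $\beta$. Both sides of \eqref{ineq:keyestimate-intro} are intersection numbers, which are linear in $\beta$. Pairing a smooth closed $(n-1,n-1)$-form with the current of integration $[D_i]$ gives the integral over $D_i$. Hence
\[
\sum_i a_i\int_{D_i}F_{n-1}(\chi)\;\geq\; c\sum_i a_i\int_{D_i}\omega^{n-1}.
\]

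On the other hand, the definition of $S$ gives $\int_{D_i}F_{n-1}(\chi)<c\int_{D_i}\omega^{n-1}$ for every $i$. At least one $a_i$ is strictly positive, and every $\int_{D_i}\omega^{n-1}$ is positive. Multiplying by the $a_i$ and summing therefore yields the strict inequality
\[
\sum_i a_i\int_{D_i}F_{n-1}(\chi)<c\sum_i a_i\int_{D_i}\omega^{n-1}.
\]
This contradicts the previous display. So the cone meets $\MN$ only at the origin, and $\{D_1,\dots,D_k\}$ is exceptional.

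There is one subtlety. A nonzero point of the cone could a priori be a class $\sum a_i[D_i]=0$ with the $a_i$ not all zero, i.e.\ the zero class written nontrivially. This case needs no separate treatment. The argument above never uses $\beta\neq 0$ as a class, only that some $a_i>0$. The same computation therefore shows that no nontrivial nonnegative combination lies in $\MN$; in particular, none equals $0\in\MN$. This is exactly the exceptionality condition, and it also gives linear independence directly.

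For the cardinality bound, recall that by Boucksom's Proposition~3.11(iii) the classes of an exceptional family are linearly independent in $H^{1,1}_{\BC}(M,\mathbb{R})$. They also lie in the Néron--Severi part, since they are divisor classes. Hence every finite subset of $S$ has at most $\rho(M)$ elements, and so $|S|\leq\rho(M)$.

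It remains to check that $c>0$. This follows from \eqref{topintegralnonnegative}, since the numerator is at least $\int_M\omega^n>0$. The real analytic content lies in Proposition~\ref{keyestimate-intro}, so this step is routine. The only point needing care is the identification of $\int_M F_{n-1}(\chi)\wedge[D]$ with $\int_D F_{n-1}(\chi)$ for a possibly singular prime divisor $D$. This is the standard Lelong current pairing, valid because $F_{n-1}(\chi)$ is a smooth closed form.
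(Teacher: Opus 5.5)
Your argument is correct and is essentially the paper's proof: a nonzero nonnegative combination $\sum a_i\{D_i\}$ satisfies the reverse strict inequality term by term, so Proposition~\ref{keyestimate-intro} excludes it from $\MN$, and the cardinality bound then comes from Boucksom's Proposition~3.11(iii). One caveat: your aside that the computation ``gives linear independence directly'' is overstated. It only rules out vanishing nonnegative combinations, not mixed-sign relations $\sum_{I}x_i\{D_i\}=\sum_{J}y_j\{D_j\}$, on which the numerical inequality gives no contradiction. So the appeal to Boucksom for linear independence, which you then make, is genuinely needed.
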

\begin{proof}
   Suppose that  $T:=\{D_1,\ldots,D_j\}$ is a finite subset in $S$, and suppose that $$\alpha=\sum_{i=1}^j x_i\{D_i\}\neq0,\quad x_i\geq 0$$ be a nonzero $(1,1)$-class which is a nonnegative combination of the divisorial classes.  For every \(i\), by the definition of $S$, 
\[
x_i\int_MF_{n-1}(\chi)\wedge\{D_i\}
\leq
cx_i\int_M\omega^{n-1}\wedge\{D_i\},
\]
and the inequality is strict whenever \(x_i>0\).
Since at least one \(x_i\) is positive, summing over \(i\) gives
the strict inequality 
\[
\int_MF_{n-1}(\chi)\wedge\alpha
<
c\int_M\omega^{n-1}\wedge\alpha.
\]
By Proposition~\ref{keyestimate-intro}, $\alpha$ cannot be a modified nef class.
This implies that the convex cone generated by the divisorial classes of the prime divisors in $T$ meets the modified nef cone only at the origin. This  exactly means that $T$ is an exceptional family in the sense of Boucksom~\cite{Boucksom}. 

As a consequence, by Proposition~3.11(iii) of~\cite{Boucksom},  the divisorial classes \(\{D_1\},,,\{D_j\}\) are linearly independent. In particular, the cardinality of  $T$ is bounded by  the Picard number of $M$. By the arbitrary choice of the  finite subset  $T$,  the cardinality of  $S$ is bounded by  the Picard number of $M$. This completes the proof.
\end{proof}

The following corollary of Theorem~\ref{t1} may be useful. 
\begin{corollary}\label{coro:gap-away-nulldivisor}
  Let $(M^n,\omega)$ be a compact K\"ahler manifold of complex
 dimension $n\geq 2$. Let $\chi$ be another K\"ahler form.  Suppose that the $J$-semistable condition ~\eqref{topintegralnonnegative}--~\eqref{$J$-semistablecondition} holds. Then the set 
 \[
\operatorname{Dest}_{n-1}(\chi,\omega)
:=
\bigcup_{\substack{
Z\subsetneq M\ \mathrm{irreducible}\\
\dim Z= n-1,\; \int_ZF_{n-1}(\chi)=0
}}Z
\]
is an analytic subvariety of $M$. And there exists a uniform constant $\delta>0$ independent of the prime divisors, such that for any prime divisor $Y$  satisfying $\int_{Y} F_{n-1}(\chi)>0$, we have 
$$\int_{Y} F_{n-1}(\chi)\geq \delta\int_{Y}\omega^{n-1}.$$
\end{corollary}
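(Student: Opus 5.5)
The plan is to read both assertions directly off Theorem~\ref{t1}, that is, Theorem~\ref{mainthm:exceptionalfamily}, with the constant $c>0$ from \eqref{theboundc}. First I check that $c$ is positive: by \eqref{topintegralnonnegative} the numerator is at least $\int_M\omega^n>0$, and the denominator is positive. Every prime divisor $Z$ with $\int_Z F_{n-1}(\chi)=0$ then satisfies
\[
\int_Z F_{n-1}(\chi)=0<c\int_Z\omega^{n-1},
\]
because $\int_Z\omega^{n-1}>0$ for the K\"ahler form $\omega$. Hence every destabilizing prime divisor belongs to the set $S$ of \eqref{S}.

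Theorem~\ref{mainthm:exceptionalfamily} shows that $S$ is finite, with cardinality at most the Picard number. So only finitely many prime divisors $D_1,\dots,D_m$ satisfy $\int_{D_i}F_{n-1}(\chi)=0$. Consequently $\operatorname{Dest}_{n-1}(\chi,\omega)=D_1\cup\cdots\cup D_m$ is a finite union of irreducible analytic hypersurfaces, and therefore an analytic subvariety of $M$. If there are no such divisors, it is empty, which is also an analytic subvariety.

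For the gap, I consider a prime divisor $Y$ with $\int_Y F_{n-1}(\chi)>0$. There are two cases.
\begin{enumerate}[label=(\roman*)]
\item If $Y\notin S$, then by the definition of $S$ we have $\int_Y F_{n-1}(\chi)\ge c\int_Y\omega^{n-1}$.
\item If $Y\in S$, then $Y$ is one of the finitely many elements $Y_1,\dots,Y_k$ of $S$ with positive $F_{n-1}$-integral. For these, set $\delta_j:=\int_{Y_j}F_{n-1}(\chi)\big/\int_{Y_j}\omega^{n-1}>0$.
\end{enumerate}
Taking $\delta:=\min\{c,\delta_1,\dots,\delta_k\}>0$ gives the claimed uniform bound, and $\delta$ depends only on $(\chi,\omega)$ and not on $Y$.

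There is no real obstacle once Theorem~\ref{t1} is available. The only points needing care are the positivity of $c$, which relies on \eqref{topintegralnonnegative}, and the observation that finiteness of $S$ is exactly what makes the minimum in the second case a minimum over finitely many positive numbers.
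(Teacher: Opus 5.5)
Your proposal is correct and follows essentially the same route as the paper. You place the destabilizing divisors inside $S$, get finitely many of them from Theorem~\ref{t1}, and take $\delta$ to be the minimum of $c$ and the finitely many positive ratios $\int_{Y}F_{n-1}(\chi)/\int_{Y}\omega^{n-1}$ over the elements of $S$ with positive integral; your explicit checks that $c>0$ and $\int_Z\omega^{n-1}>0$, which the paper leaves implicit, are the right ones.
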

\begin{proof}
 Let \begin{equation}
     K:=\{ D \,|\,D\; \text{is a prime divisor and } \int_DF_{n-1}(\chi)=0 \}.
 \end{equation}
 Since $K\subset S$, where $S$ is defined as~\eqref{S}, Theorem~\ref{t1} implies that $K$ is a finite set. Write 
 \(
     K:=\{ D_1,\ldots, D_m\}.
 \)
Then $$\operatorname{Dest}_{n-1}(\chi,\omega)= \bigcup_{i=1}^m D_i,$$  hence it is an analytic subvariety. 

Next we prove the existence of the gap $\delta>0$.  Let \begin{equation}
     S_{+}:=\{ D \in S\,|\,D\; \text{is a prime divisor and } \int_DF_{n-1}(\chi)>0 \}.
 \end{equation}
  First suppose that $S_{+}=\varnothing$,  it means that if a prime divisor $D$ satisfies $\int_DF_{n-1}(\chi)>0$, then $D \notin S$. Hence we have 
$$\int_{D} F_{n-1}(\chi)\geq c\int_{D}\omega^{n-1}$$ by the definition of $S$. Let $\delta=c$ then we are done. Now suppose that $S_{+}\neq\varnothing$, since $S_{+}\subset S$ is a finite set, there is a constant $b>0$, such that $$b=\mathop{\text{inf}}\limits_{D\in S_{+}}\frac{\int_DF_{n-1}(\chi)}{\int_{D}\omega^{n-1}}.$$ 
Clearly $b\leq c$. Let $\delta=b$,  then it clearly  implies the conclusion. 
\end{proof}

\section{Bounded potential solution of the $J$-equation under the smooth boundary cone condition}
Throughout this section, we work under the assumptions of
Theorem~\ref{thm:regularity of the weak solution}. Let
\((M^n,\omega)\) be a compact K\"ahler manifold of complex dimension
\(n\geq3\), and let \(\chi\) be another K\"ahler form. Suppose that
the normalization~\eqref{normalizationcondition} and the
smooth boundary cone condition~\eqref{semisubsolutioncondition} hold.
Let \(\{D_1,\ldots,D_m\}\) be the set of all prime divisors satisfying
\begin{equation}
\int_{D_i}F_{n-1}(\chi)=0.
\end{equation}
If \(m=0\), then the numerical conditions are strict, and the
existence of a smooth solution follows directly from the
\(J\)-stable criterion. Hence, we assume that \(m\geq1\).

The normalization~\eqref{normalizationcondition} and the
smooth boundary cone condition~\eqref{semisubsolutioncondition} exactly coincide with  the smooth boundary cone setting of~\cite{Liu2026Boundary}. Moreover, Proposition~\ref{keyestimate-intro} implies that, for every nonzero
modified nef class \(\xi\),
\begin{equation}\label{eq:numerical-J-bigness-criterion}
\int_M F_{n-1}(\chi)\wedge\xi
\geq
c\int_M\omega^{n-1}\wedge\xi
>0.
\end{equation}
    Thus the numerical \(J\)-bigness criterion of
\cite[Proposition~29]{Liu2026Boundary} is satisfied. Hence the \(J\)-big assumption 
in ~\cite{Liu2026Boundary} can be removed.

Consequently, for \(n\geq4\), we can apply
\cite[Proposition~25, Corollary~26, Lemmas~27 and~28]{Liu2026Boundary}. It follows that, for $n\geq 4$, there exist constants \(a_i>0\),
and   a K\"ahler form \(A\), $A\in[\chi-\sum_{i=1}^m a_i\{D_i\}-\varepsilon\omega]$, such that
\(
F_{n-1}(A)
=A^{\,n-1}-(n-1)A^{\,n-2}\wedge\omega>0.\) For \(n=3\), the smooth boundary cone condition implies
\(F_1(\chi)>0\), so there are no \(J\)-null curves, and the same
conclusion  follows from~\cite{Liu2026Semistable}.

Let \(\widehat\chi:=A+\varepsilon\omega\). Then \(\widehat\chi\) is a K\"ahler form, $\widehat\chi\in[\chi-\sum_{i=1}^m a_i\{D_i\}]$, and satisfying\begin{equation}\label{eq:Section-strict-barrier}
F_{n-1}(\widehat\chi)
=\widehat\chi^{\,n-1}-(n-1)\widehat\chi^{\,n-2}\wedge\omega>0,
\end{equation}
 by the monotonicity of the cone condition.

Let  $s_i$ be the canonical sections of $\mathcal O(D_i)$ vanishing on $D_i$, $h_i$ be the Hermitian metrics on $\mathcal O(D_i)$, and $\theta_{D_i}$ be the normalized curvature forms with respect to $h_i$.
Then we can write
$$\widehat{\chi}=\chi-\sum_{i=1}^m a_i \theta_{D_i}+\sqrt{-1}\partial\bar{\partial}\varphi$$ for
a smooth function \(\varphi\). 
Define the quasi-plurisubharmonic function
\begin{equation}\label{eq:Section-rho}
\rho=\varphi+\sum_{i=1}^m a_i\log\|s_i\|_{h_i}^2.
\end{equation}
By the Lelong-Poincar\'e formula~\cite{Demailly},
\[\chi+\sqrt{-1}\partial\bar\partial\rho
=\widehat\chi+\sum_{i=1}^m a_i[D_i]\geq\widehat\chi,\] 
in the sense of currents. 
In particular,
\begin{equation}\label{eq:J-equation-barrier-off-Z}
\chi+\sqrt{-1}\partial\bar\partial\rho=\widehat\chi
\qquad\text{on }M\setminus Z,\qquad Z=\bigcup_{i=1}^m D_i
\end{equation}
We normalize \(\sup_M\rho=0\). Since every \(a_i>0\), we have
\begin{equation}\label{eq:Section-rho-minus-infty}
\rho(x)\longrightarrow-\infty\qquad\text{as }x\longrightarrow Z.
\end{equation}

Recall that the usual pointwise property of the $J$-cone
implies
\begin{equation}\label{eq:Section-descending-cone}
F_{n-1}(\gamma)\geq0,\ \gamma>0
\quad\Longrightarrow\quad
F_p(\gamma)>0,\qquad 1\leq p\leq n-2.
\end{equation}
This is the standard  property of the $J$-cone; see, for
example, \cite{SongWeinkove,Chen}.

\subsection{The approximating twisted $J$-equations}

For \(t>0\), let \(\chi_t:=\chi+t\omega\) and define
\begin{equation}\label{eq:Section-ct}
c_t:=\frac{\int_MF_n(\chi_t)}
{\int_M\omega^n}.
\end{equation}
The normalization~\eqref{normalizationcondition} and the smooth boundary cone condition~\eqref{semisubsolutioncondition} implies
\begin{equation}\label{eq:Section-ct-estimate}
c_t>0,\qquad c_t\longrightarrow0,
\qquad 0<c_t\leq Ct
\end{equation}
for \(0<t\leq t_0\), after fixing \(t_0>0\) sufficiently small.

We now consider the approximate twisted $J$-equation. 
By the Hessian quotient theory of
Sz\'ekelyhidi~\cite{Szekelyhidi2018}, or equivalently by
Gao Chen's existence theorem for the twisted $J$-equation
\cite[Theorem~1.14]{Chen},  for every $t>0$, there exists a smooth function \(u_t\) satisfying
\begin{equation}\label{eq:Section-approx-equation}
\eta_t:=\chi+t\omega+\sqrt{-1}\partial\bar\partial u_t>0,
\qquad
\eta_t^n=n\eta_t^{n-1}\wedge\omega+c_t\omega^n,
\qquad
\sup_Mu_t=0.
\end{equation}

If \(\lambda_1,\ldots,\lambda_n\) are the eigenvalues of \(\eta_t\)
with respect to \(\omega\), then 
\begin{equation}\label{eq:Section-eigenvalue-equation}
1=\sum_{i=1}^n\frac1{\lambda_i}
+\frac{c_t}{\lambda_1\cdots\lambda_n}.
\end{equation}
Since \(c_t>0\), we have \(\lambda_i>1\) for every \(i\), and hence
\begin{equation}\label{eq:Section-eta-lower}
\eta_t>\omega.
\end{equation}

Our aim is to obtain the estimates of $u_t$ that are uniform in \(t\), and then pass to
the limit as \(t\to0\).  To the best of our knowledge, the global $C^0$ estimate is new in this setting.

The proof of higher regularity on $M\setminus Z$ is an elliptic analogue to the proof of the local regularity of $J$-flow in boundary case; see~\cite{SongWeinkove, Liu2026Semistable} for instance.  For completeness, we also include the details here.

\subsection{Local $C^0$ estimate}

The local \(C^0\) estimate on \(M\setminus Z\) is based on the ABP argument used by
Sz\'ekelyhidi in~\cite[Propositions~10 and~11]{Szekelyhidi2018}. In the present
setting, the relevant domain is a connected component of a sublevel
set of \(u_t-\rho\), which need not be convex and may have irregular
boundary. We shall use the standard lower-contact-set form of the ABP
estimate on bounded domains. This version requires neither convexity
nor boundary regularity and follows directly from the normal-map
argument in~\cite[Lemma~9.2]{GT01}; see also
\cite[Chapter~3]{CaffarelliCabre1995}.

We also need the following elementary pointwise lemma.

\begin{lemma}\label{lem:Section-bounded-set}
There exist constants \(\varepsilon_0,R>0\), independent of
sufficiently small \(t>0\), such that if \(Y>0\) satisfies
\[
Y^n=nY^{n-1}\wedge\omega+c_t\omega^n,
\qquad
Y\geq\widehat\chi+t\omega-\varepsilon_0\omega,
\]
then \(Y\leq R\omega\).
\end{lemma}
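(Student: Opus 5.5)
We argue pointwise in terms of eigenvalues, following Sz\'ekelyhidi's treatment of the cone condition for Hessian quotient equations. The plan is to show that for small \(\varepsilon_0\) and \(t\), the lower bound \(Y\geq\widehat\chi+t\omega-\varepsilon_0\omega\) pushes \(Y\) into the strict cone, and that the strict cone forces an upper bound on the largest eigenvalue.

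\textbf{Step 1: a uniform strict subsolution.} By compactness of \(M\) and the strict inequality~\eqref{eq:Section-strict-barrier}, there is \(\kappa>0\) with \(F_{n-1}(\widehat\chi)\geq\kappa\,\omega^{n-1}\) in the strong sense, i.e.\ \(\sigma_{n-1}(\mu|i)-\sigma_{n-2}(\mu|i)\geq\kappa\) for the eigenvalues \(\mu\) of \(\widehat\chi\) and every \(i\). Also \(\widehat\chi\geq c_0\omega\) for some \(c_0>0\). Choose \(\varepsilon_0<c_0/2\) so small, by uniform continuity of these polynomial expressions on the compact set of eigenvalues of \(\widehat\chi\), that \(\widehat\chi':=\widehat\chi-\varepsilon_0\omega\) is still K\"ahler with \(F_{n-1}(\widehat\chi')\geq(\kappa/2)\omega^{n-1}\). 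Adding \(t\omega\geq0\) preserves this, since the cone quantities are monotone increasing in the eigenvalues. This is the same monotonicity used in the proof of Lemma~\ref{estimateforKahlerclass}, valid on the region where all \(\lambda_i>0\).

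\textbf{Step 2: reduction to a matrix inequality.} At a point, work in \(\omega\)-orthonormal coordinates. The equation says the eigenvalues \(\lambda\) of \(Y\) satisfy
\[
\sum_i\lambda_i^{-1}+c_t/\prod_i\lambda_i=1,
\]
so in particular \(\lambda_i>1\) and \(\sum_i\lambda_i^{-1}<1\). We argue by contradiction. Suppose \(\lambda_{\max}=\lambda_1\) is very large with \(e_1\) the unit eigenvector. Since \(Y\geq B:=\widehat\chi'+t\omega\), the standard Sz\'ekelyhidi-type argument (Proposition 5 in~\cite{Szekelyhidi2018}) applies. One writes \(Y\) as \(B\) plus a positive semidefinite matrix and uses concavity of the operator \(f(\lambda)=-\sum_i\lambda_i^{-1}\) together with the strict subsolution property of \(B\), which says that \(\sum_{i\ne j}\mu_i^{-1}<1-\kappa'\) for each \(j\), i.e.\ \(B\) lies in the set \(\{f(\mu+s e_j)\to\) limit \(>-1\}\). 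The conclusion is that \(\sum_i\lambda_i^{-1}<1-\kappa''\) once \(\lambda_1\geq R\).

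More concretely: the \(n-1\) largest-by-restriction directions of \(Y\) dominate those of \(B\), so by eigenvalue interlacing and Cauchy-interlacing for the compression to \(e_1^\perp\), \(\sum_{i\geq2}\lambda_i^{-1}\leq \operatorname{tr}\big((B|_{e_1^\perp})^{-1}\big)+o(1)\) as \(\lambda_1\to\infty\). The right-hand side is at most \(1-\kappa'\) by the strict cone condition, which is stated for all hyperplanes. Then \(1=\sum\lambda_i^{-1}+c_t/\prod\lambda_i\leq 1-\kappa'+o(1)+Ct\), using \(\lambda_i>1\) and~\eqref{eq:Section-ct-estimate}. This is a contradiction for \(t\) small and \(\lambda_1\geq R\).

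\textbf{Main obstacle.} The delicate point is the passage to the hyperplane \(e_1^\perp\). One needs that a large eigenvalue of \(Y\) in direction \(e_1\) leaves the compression of \(Y^{-1}\) to \(e_1^\perp\) controlled by \(B\)'s compression, with an error uniform in everything. I would handle this via the Schur complement: \((Y^{-1})|_{e_1^\perp}\) is close to \((Y|_{e_1^\perp})^{-1}\) up to terms bounded by \(|Y_{1\bar j}|^2/\lambda_1\)-type corrections, and \(Y|_{e_1^\perp}\geq B|_{e_1^\perp}\). If this is messy, I would simply cite the cone-condition \(C^2\)-type lemma of~\cite{Szekelyhidi2018} (Proposition 5, with \(\underline{\lambda}\) the eigenvalues of \(B\) and \(\delta=\kappa'\)), which gives exactly that \(\lambda\in\underline\lambda-\delta\mathbf 1+\Gamma_n\) together with \(f(\lambda)=\sigma\) forces \(|\lambda|\leq R\). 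Here the perturbation \(c_t/\prod\lambda_i\leq Ct\) is absorbed by a uniform \(\delta\).
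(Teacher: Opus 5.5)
Your concrete argument is the paper's. You restrict to the hyperplane $e_1^\perp$ spanned by the other eigenvectors of $Y$, use $Y|_{e_1^\perp}\geq B|_{e_1^\perp}$ together with the uniform hyperplane gap $\operatorname{tr}_{B|_H}\omega|_H\leq 1-\tau$, and then read off the bound on $\lambda_1$ from the equation.

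The \emph{main obstacle} you flag does not exist. Since $e_1$ is an eigenvector of $Y$, the hyperplane $e_1^\perp$ is $Y$-invariant, so $Y|_{e_1^\perp}$ has eigenvalues exactly $\lambda_2,\dots,\lambda_n$. Operator monotonicity of inversion then gives $\sum_{i\geq2}\lambda_i^{-1}\leq\operatorname{tr}_{B|_{e_1^\perp}}\omega|_{e_1^\perp}$ with no $o(1)$ error, so you need neither the Schur complement nor the appeal to Sz\'ekelyhidi.
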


\begin{proof}
Since $\widehat{\chi}$ is a K\"ahler form, and \(F_{n-1}(\widehat\chi)>0\), after decreasing
\(\varepsilon_0\) and \(t_0\) if necessary there are fixed
\(\kappa,\delta>0\) such that, for \(0<t\leq t_0\),
\[
\widehat\chi+(t-\varepsilon_0)\omega\geq\kappa\omega,
\qquad
F_{n-1}\bigl(\widehat\chi+(t-\varepsilon_0)\omega\bigr)
\geq\delta\omega^{n-1}.
\]
Thus, for every complex hyperplane
\(H\subset T_x^{1,0}M\), there exists a uniform \(\tau>0\) such that
\begin{equation}\label{eq:Section-hyperplane-gap}
\operatorname{tr}_{(\widehat\chi+(t-\varepsilon_0)\omega)|_H}
\omega|_H\leq1-\tau.
\end{equation}

Choose \(\omega\)-unitary coordinates in which
\(Y=\sum_i\lambda_i e_i\). Since
\(Y\geq\widehat\chi+(t-\varepsilon_0)\omega\), restriction to
hyperplanes give
\[
\sum_{j\neq i}\frac1{\lambda_j}\leq1-\tau,
\qquad
\lambda_j\geq\kappa.
\]
If \(\lambda_i=\max_j\lambda_j\), the equation for \(Y\) gives
\[
1\leq1-\tau+\frac1{\lambda_i}
+\frac{c_t}{\kappa^{n-1}\lambda_i}.
\]
Hence
\[
\lambda_i\leq
\frac{1+c_t\kappa^{-(n-1)}}{\tau}\leq R,
\]
which proves the lemma.
\end{proof}

\begin{proposition}[Local $C^0$ estimate]
\label{prop:Section-local-C0}
There exists a constant \(C>0\), independent of \(t\), such that
\begin{equation}\label{eq:Section-relative-C0}
u_t\geq\rho-C\qquad\text{on }M\setminus Z.
\end{equation}
Consequently, for every \(K\Subset M\setminus Z\), there exists
\(C_K>0\), independent of \(t\), such that
\begin{equation}\label{eq:Section-local-C0}
-C_K\leq u_t\leq0\qquad\text{on }K.
\end{equation}
\end{proposition}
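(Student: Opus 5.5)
We follow Sz\'ekelyhidi's ABP argument \cite[Propositions~10 and~11]{Szekelyhidi2018}, with the smooth reference form replaced by the singular barrier $\rho$. Set $w:=u_t-\rho$. Since $\rho\to-\infty$ along $Z$ by \eqref{eq:Section-rho-minus-infty} and $u_t$ is smooth on $M$, $w\to+\infty$ near $Z$. Hence $w$ attains its minimum $L:=\min_{M\setminus Z}w$ at some interior point $x_0\in M\setminus Z$, and all the analysis can be done in $M\setminus Z$. There $\chi+\sqrt{-1}\partial\bar\partial\rho=\widehat\chi$ by \eqref{eq:J-equation-barrier-off-Z}. With $\chi_t=\chi+t\omega$ this gives
\[
\eta_t=\widehat\chi+t\omega+\sqrt{-1}\partial\bar\partial w \quad\text{on } M\setminus Z.
\]
It then suffices to show $L\geq-C$ with $C$ independent of $t$. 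The local bound on $K\Subset M\setminus Z$ follows at once, because $\rho\geq-C_K$ on $K$ and $u_t\leq 0$.

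We argue by contradiction in the usual way. Take holomorphic coordinates in a unit ball $B_1(x_0)$, of fixed size; here we use that $x_0$ stays in a region where $\rho$ is controlled, or simply work in coordinate balls covering $M$. Consider $v:=w+\varepsilon_0|z|^2/4$, where $\varepsilon_0$ is the constant from Lemma~\ref{lem:Section-bounded-set}. Then $v(0)=L$ and $v\geq L+\varepsilon_0/4$ on $\partial B_1$. Let $P$ be the lower contact set of $v$ in $B_1$: the points where $v$ is convex with a supporting hyperplane of slope $|Dv|<\varepsilon_0/8$. Also take the connected component of the sublevel set $\{v<L+\varepsilon_0/4\}$ containing $0$. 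The ABP estimate in its lower-contact-set form (\cite[Lemma~9.2]{GT01}) gives
\[
c_0\varepsilon_0^{2n}\leq\int_P\det D^2v .
\]
At a point of $P$ we have $D^2v\geq0$, so $\sqrt{-1}\partial\bar\partial w\geq-\frac{\varepsilon_0}{4}\omega$ up to the comparison between $\omega$ and the Euclidean metric, which we absorb into $\varepsilon_0$. Therefore $Y:=\eta_t\geq\widehat\chi+t\omega-\varepsilon_0\omega$, and $Y$ solves the twisted equation. Lemma~\ref{lem:Section-bounded-set} then gives $\eta_t\leq R\omega$, so $\sqrt{-1}\partial\bar\partial v$ is bounded above on $P$. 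Combined with real convexity, $|D^2v|\leq C$ on $P$, hence $\det D^2v\leq C$ and $|P|\geq c_1>0$.

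On $P$ we also have $v\leq L+\varepsilon_0/4$, so $u_t\leq\rho+L+C$, that is, $-u_t\geq|L|-C-\rho\geq|L|-C$ since $\rho\leq0$. To conclude, we use an $L^1$ bound $\int_M(-u_t)\omega^n\leq C$. This holds because $\chi_t+\sqrt{-1}\partial\bar\partial u_t>0$ with $\sup u_t=0$, and the functions $\chi_{t_0}$-psh normalized by their supremum form a compact family (standard Green's function argument). Then $c_1(|L|-C)\leq C$, which bounds $|L|$.

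The main obstacle is the geometry of the ABP step. The domain is a sublevel component near $x_0$ and need not avoid $Z$. But $v<L+\varepsilon_0/4$ forces $\rho>u_t-L-\varepsilon_0/4$, and this only keeps us away from $Z$ in a way that depends on $t$. This is harmless, since ABP needs no boundary regularity and all estimates at contact points are pointwise and uniform. The coordinate ball around $x_0$ must have uniform size, which holds by compactness of $M$; the Lelong--Poincar\'e identity is used only off $Z$.
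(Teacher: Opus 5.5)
Your proposal is correct and is essentially the paper's proof: an ABP lower-contact-set estimate on uniformly sized coordinate balls around the minimum point of $u_t-\rho$ (whose sublevel set stays a positive, $t$-dependent distance from $Z$), then Lemma~\ref{lem:Section-bounded-set} to bound the Hessian on the contact set and hence its volume from below, and finally a uniform $L^1$ bound to conclude. The only difference is cosmetic: you use the $L^1$ bound for $u_t$ together with $\rho\leq 0$, whereas the paper uses the $L^1$ bound for $v_t=u_t-\rho$ together with Chebyshev's inequality.
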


\begin{proof}
Set \(v_t:=u_t-\rho\) on \(M\setminus Z\). By
\eqref{eq:Section-rho-minus-infty},
\(v_t\to+\infty\) as \(x\to Z\), while by~\eqref{eq:J-equation-barrier-off-Z},
\begin{equation}\label{eq:Section-eta-v}
\eta_t=\widehat\chi+t\omega
+\sqrt{-1}\partial\bar\partial v_t
\qquad\text{on }M\setminus Z.
\end{equation}

We first recall a standard uniform \(L^1\)-estimate. By
\eqref{eq:Section-eta-lower},
\[
\sqrt{-1}\partial\bar\partial u_t
=\eta_t-\chi-t\omega\geq(1-t)\omega-\chi\geq-C\omega.
\]
Together with \(\sup_Mu_t=0\), the standard compactness estimate for
normalized quasi-plurisubharmonic functions implies
\begin{equation}\label{eq:Section-L1}
\int_M|u_t|\,\omega^n\leq C,\qquad
\int_M|v_t|\,\omega^n\leq C;
\end{equation}
see, for example, \cite{Demailly}.

Suppose that the desired lower bound fails. Then there exists a
sequence \(t_j\in(0,t_0]\) such that
\[
    m_{t_j}:=\inf_{M\setminus Z}v_{t_j}\longrightarrow-\infty.
\]
For notational simplicity, we relabel \(t_j\) by \(t\) and \(m_{t_j}\)
by \(m_t\).
 Since for every $0<t\leq t_0$,
\(v_t\to+\infty\) near \(Z\), the minimum is attained at some
\(p_t\in M\setminus Z\).
By compactness of \(M\), we may fix finitely many holomorphic
coordinate charts
\[
V_\alpha\Subset U_\alpha,\qquad 1\leq\alpha\leq N,
\]
such that the \(V_\alpha\) cover \(M\). After shrinking the
\(V_\alpha\) if necessary, there exists a constant \(r>0\),
independent of \(\alpha\), with the following property: for every
\(p\in V_\alpha\), after translating the holomorphic coordinates so
that \(z(p)=0\), the Euclidean ball \(B_{2r}(0)\) is contained in the
coordinate image of \(U_\alpha\). Moreover, on these finitely many
charts the Euclidean metric and \(\omega\) are uniformly equivalent,
with all constants independent of \(\alpha\).

For each \(t\), choose an index \(\alpha(t)\) such that
\(p_t\in V_{\alpha(t)}\), and use the corresponding coordinates,
centered at \(p_t\). On \(B_r(0)\), set
\[
w_t(z):=v_t(z)+\varepsilon r^{-2}|z|^2,
\]
where \(\varepsilon>0\) is fixed sufficiently small and independent
of \(t\). Since \(z(p_t)=0\) and
\(m_t=v_t(p_t)=\inf_{M\setminus Z}v_t\), we have
\[
w_t(p_t)=m_t,
\qquad
w_t\geq m_t+\varepsilon
\quad\text{on }\partial B_r(0).
\]
Let \(\Omega_t\) be the connected component containing \(p_t\) of
\[
\{w_t<m_t+\varepsilon\}\cap(B_r(0)\setminus Z).
\]
Since \(v_t(x)\to+\infty\) as \(x\to Z\), the sublevel set
\(\{w_t<m_t+\varepsilon\}\) stays a positive distance away from
\(Z\), hence
\(
    \overline{\Omega_t}\cap Z=\varnothing.
\)
Moreover,
\(
\operatorname{diam}_{\mathbb R}(\Omega_t)\leq 2r,
\)
and
\[
w_t\geq m_t+\varepsilon
\qquad\text{on }\partial\Omega_t.
\]
Indeed, on the portion of \(\partial\Omega_t\) contained in the
interior of \(B_r(0)\), equality
\(w_t=m_t+\varepsilon\) holds by the definition of the connected
component, while on
\(\partial\Omega_t\cap\partial B_r(0)\) the same lower bound follows
from \(v_t\geq m_t\).

The bounded-domain ABP contact-set estimate
\cite[Lemma 9.2]{GT01}
(cf. \cite[Proposition 11]{Szekelyhidi2018})
applied in real dimension \(2n\),
then gives a contact set \(\Gamma_t\subset\Omega_t\) such that
\begin{equation}\label{eq:Section-ABP-contact}
D_{\mathbb R}^2w_t\geq0,\qquad
w_t<m_t+\frac{\varepsilon}{2}
\quad\text{on }\Gamma_t,
\end{equation}
and
\begin{equation}\label{eq:Section-ABP-integral}
|B_1(0)|
\left(\frac{\varepsilon}{4r}\right)^{2n}
\leq
\int_{\Gamma_t}
\det_{\mathbb R}\bigl(D_{\mathbb R}^2w_t\bigr)\,dx.
\end{equation}

The first inequality in \eqref{eq:Section-ABP-contact} implies
\(\sqrt{-1}\partial\bar\partial v_t\geq-C\varepsilon\omega\)
on \(\Gamma_t\). Choosing \(\varepsilon\) so that
\(C\varepsilon\leq\varepsilon_0\), Lemma~\ref{lem:Section-bounded-set}
and \eqref{eq:Section-eta-v} imply
\begin{equation}\label{eq:Section-eta-contact}
\eta_t\leq R\omega\qquad\text{on }\Gamma_t.
\end{equation}
Thus the complex Hessian of \(w_t\) is uniformly bounded above on
\(\Gamma_t\), independent of $t$. Since its real Hessian is nonnegative there, for every
real vector \(\xi\),
\[
0\leq D_{\mathbb R}^2w_t(\xi,\xi)
\leq D_{\mathbb R}^2w_t(\xi,\xi)
+D_{\mathbb R}^2w_t(J\xi,J\xi)\leq C|\xi|^2.
\]
Hence
\(\det_{\mathbb R}D_{\mathbb R}^2w_t\leq C\) on \(\Gamma_t\).
Combining this with \eqref{eq:Section-ABP-integral} gives
\begin{equation}\label{eq:Section-contact-volume}
\operatorname{Vol}_\omega(\Gamma_t)\geq c_0>0.
\end{equation}

On the other hand, \eqref{eq:Section-ABP-contact} gives
\(v_t<m_t+\varepsilon/2\) on \(\Gamma_t\). Since \(m_t\to-\infty\), for large \(|m_t|\),
\[
\Gamma_t\subset
\left\{|v_t|\geq|m_t|-\frac{\varepsilon}{2}\right\}.
\]
By Chebyshev's inequality and \eqref{eq:Section-L1},
\[
\operatorname{Vol}_\omega(\Gamma_t)
\leq\frac{C}{|m_t|-\varepsilon/2}\longrightarrow0,
\]
contradicting \eqref{eq:Section-contact-volume}. Hence
\(\inf_{M\setminus Z}v_t\geq-C\), which is
\eqref{eq:Section-relative-C0}. The local estimate
\eqref{eq:Section-local-C0} follows immediately because \(\rho\)
is smooth on \(M\setminus Z\) and \(u_t\leq0\).
\end{proof}

\subsection{Uniform global $C^0$ estimate}\label{Section:Uniform-global-$C^0$ estimate}

 We now upgrade the local estimate to a uniform global \(C^0\)
estimate. Inspired by Jeffres' trick~\cite{Jeffres2000}, we
replace the logarithmic barrier \(\rho\) by the bounded barrier
\[
\psi=e^{a\rho},
\]
where \(a>0\) is chosen sufficiently small. To the
best of our knowledge, this is the first application of such a
Jeffres-type barrier to the \(J\)-equation.  It has two essential advantages: its slow vanishing prevents
the minimum of \(u_t-\psi\) from attaining on \(Z\), while the blow-up of
\[
\frac{F_n\bigl(\chi+\sqrt{-1}\partial\bar\partial\psi\bigr)}
{\omega^n}
\]
near \(Z\) forces the minimum point to remain outside a fixed
neighborhood of \(Z\).

We first recall an elementary quantitative property of the
$J$-cone.

\begin{lemma}\label{lem:Section-interpolation}
Let \(\gamma_0,\gamma_1\) be K\"ahler forms satisfying
\(F_{n-1}(\gamma_0)\geq0\) and \(F_{n-1}(\gamma_1)>0\). Then, for
\(B_\lambda=(1-\lambda)\gamma_0+\lambda\gamma_1\),
\(0\leq\lambda\leq1\), there exists \(\delta>0\) such that
\begin{equation}\label{eq:Section-interpolation}
F_{n-1}(B_\lambda)\geq
\delta\lambda\,\omega^{n-1}.
\end{equation}
\end{lemma}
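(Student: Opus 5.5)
The plan is to reduce \eqref{eq:Section-interpolation} to a pointwise statement about the smooth family of forms \(F_{n-1}(B_\lambda)\), and to get the linear rate in \(\lambda\) from concavity of the \(J\)-cone. Since \(M\) is compact and all forms are smooth, it suffices to prove a pointwise inequality with a uniform \(\delta\).

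\textbf{Step 1: reformulate via hyperplane traces.} For a K\"ahler form \(\gamma\), positivity of \(F_{n-1}(\gamma)\) as an \((n-1,n-1)\)-form is equivalent to a condition on every complex hyperplane \(H\subset T^{1,0}_xM\):
\[
\operatorname{tr}_{\gamma|_H}\omega|_H\leq 1.
\]
This is the same reformulation used in Lemma~\ref{lem:Section-bounded-set}. Quantitatively, \(F_{n-1}(\gamma)\geq c\,\omega^{n-1}\) holds whenever \(\gamma\) is bounded above and below and
\[
\operatorname{tr}_{\gamma|_H}\omega|_H\leq 1-\tau \quad\text{for all } H,
\]
with \(c\) depending on \(\tau\) and on those bounds. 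The reason is that the value of \(F_{n-1}(\gamma)\) on the \((n-1)\)-plane \(H\) equals \(\det(\gamma|_H)\bigl(1-\operatorname{tr}_{\gamma|_H}\omega|_H\bigr)\), up to normalization. So this step gives:
\begin{itemize}
\item \(\operatorname{tr}_{\gamma_0|_H}\omega|_H\leq 1\) for all \(H\);
\item \(\operatorname{tr}_{\gamma_1|_H}\omega|_H\leq 1-\tau_1\) for some uniform \(\tau_1>0\), by compactness of \(M\) and of the Grassmannian bundle.
\end{itemize}

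\textbf{Step 2: convexity of the trace.} The map \(A\mapsto \operatorname{tr}(A^{-1}\omega|_H)\) is convex on positive definite Hermitian forms on \(H\), because \(A\mapsto A^{-1}\) is operator convex. Hence
\[
\operatorname{tr}_{B_\lambda|_H}\omega|_H\leq (1-\lambda)\cdot 1+\lambda(1-\tau_1)=1-\lambda\tau_1.
\]
Since \(B_\lambda\) lies between \(\min(\gamma_0,\gamma_1)\) and \(\max(\gamma_0,\gamma_1)\), it is uniformly bounded above and below: \(\kappa\omega\leq B_\lambda\leq K\omega\).

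\textbf{Step 3: conclude.} Evaluating \(F_{n-1}(B_\lambda)\) on each \(H\) gives at least \(\det(B_\lambda|_H)\,\lambda\tau_1\geq \kappa^{n-1}\tau_1\lambda\). This yields \eqref{eq:Section-interpolation} in the sense of strictly positive forms tested on decomposable \((n-1)\)-vectors.

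One point needs care here. For \((n-1,n-1)\)-forms, positivity on all hyperplanes is the same as positivity as a form. This holds because \((n-1,n-1)\)-forms are dual to \((1,1)\)-forms, where weak and strong positivity coincide. So \(F_{n-1}(B_\lambda)-\delta\lambda\omega^{n-1}\geq0\) follows, with \(\delta=\kappa^{n-1}\tau_1\) up to a dimensional constant.

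\textbf{Main obstacle.} The key step is the convexity in Step 2; the rest is bookkeeping with uniform constants. If I wanted to avoid operator convexity, the fallback is to write \(\operatorname{tr}_{A}\omega=\sum_j \langle A^{-1}e_j,e_j\rangle\). Each term \(\langle A^{-1}v,v\rangle\) is convex in \(A\), because it equals \(\sup_w\bigl(2\operatorname{Re}\langle v,w\rangle-\langle Aw,w\rangle\bigr)\), a supremum of affine functions of \(A\).
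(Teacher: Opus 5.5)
Your proof is correct and is essentially the paper's argument: the convex function \(P_\omega\) from Chen that the paper invokes is your maximal hyperplane trace \(\max_H\operatorname{tr}_{\gamma|_H}\omega|_H\). The paper likewise gets \(P_\omega(B_\lambda)\leq 1-\delta_0\lambda\) by convexity and then uses \(B_\lambda\geq c\,\omega\). The remaining differences are cosmetic: you prove the convexity yourself (via operator convexity of \(A\mapsto A^{-1}\)) rather than citing Chen, and you conclude by evaluating on hyperplanes and using that weak and strong positivity agree for \((n-1,n-1)\)-forms, whereas the paper rescales \(\omega\) to obtain \(F_{n-1}(B_\lambda)\geq\delta_0\lambda B_\lambda^{n-1}\).
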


\begin{proof}
We use the convexity of the closed \(J\)-cone; see
\cite[Remark~3.4]{Chen}. In the notation of \cite[Section~3]{Chen},
the cone condition is characterized by
\[
F_{n-1}(\gamma)\geq0
\quad\Longleftrightarrow\quad
P_\omega(\gamma)\leq1,
\]
where \(P_\omega\) is convex.
Since \(F_{n-1}(\gamma_1)>0\) and \(M\) is compact, there exists
\(\delta_0>0\) such that
\(
P_\omega(\gamma_1)\leq1-\delta_0.
\)
By the convexity of \(P_\omega\),
\[
P_\omega(B_\lambda)
\leq
(1-\lambda)P_\omega(\gamma_0)
+\lambda P_\omega(\gamma_1)
\leq
1-\delta_0\lambda.
\]
Using again the characterization of the \(J\)-cone, we obtain
\[
(1-\delta_0\lambda)B_\lambda^{n-1}
-(n-1)B_\lambda^{n-2}\wedge\omega
\geq0,
\]
and hence
\[
F_{n-1}(B_\lambda)
\geq
\delta_0\lambda B_\lambda^{n-1}.
\]
Since \(\gamma_0\) and \(\gamma_1\) are K\"ahler forms on the compact
manifold \(M\), there exists \(c>0\) such that
\(B_\lambda\geq c\omega\) for every \(0\leq\lambda\leq1\).
Therefore
\[
F_{n-1}(B_\lambda)
\geq
\delta_0c^{n-1}\lambda\,\omega^{n-1}.
\]
The conclusion follows.
\end{proof}

\begin{lemma}[Bounded divisorial barrier]
\label{lem:bounded-barrier}
There exists \(a>0\) sufficiently small such that
\[
\psi:=e^{a\rho}\quad\text{on }M\setminus Z,
\qquad
\psi|_Z:=0,
\]
satisfies
\[
\psi\in C^0(M)\cap C^\infty(M\setminus Z),
\quad 0\leq\psi\leq 1,\quad\widetilde\chi
:=
\chi+\sqrt{-1}\partial\bar\partial\psi>0,
\]
\begin{equation}\label{eq:J-flow-barrier-cone}
F_{n-1}(\widetilde\chi)>0
\quad\text{on }M\setminus Z,
\end{equation}
and
\begin{equation}\label{eq:J-equation-barrier-blow-up}
\frac{F_n(\widetilde\chi)}{\omega^n}
\longrightarrow+\infty
\qquad\text{as }x\longrightarrow Z.
\end{equation}
Finally, for every \(u\in C^\infty(M)\), the function \(u-\psi\)
cannot attain a local minimum at a point of \(Z\).
\end{lemma}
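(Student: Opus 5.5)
Compute on $M\setminus Z$:
\[
\sqrt{-1}\partial\bar\partial\psi = a e^{a\rho}\bigl(\sqrt{-1}\partial\bar\partial\rho + a\sqrt{-1}\partial\rho\wedge\bar\partial\rho\bigr).
\]
Set $\lambda:=ae^{a\rho}\in(0,a]$. By \eqref{eq:J-equation-barrier-off-Z}, $\sqrt{-1}\partial\bar\partial\rho=\widehat\chi-\chi$ on $M\setminus Z$. Hence
\[
\widetilde\chi=(1-\lambda)\chi+\lambda\widehat\chi+\lambda a\,\sqrt{-1}\partial\rho\wedge\bar\partial\rho = B_\lambda + \lambda a\,\sqrt{-1}\partial\rho\wedge\bar\partial\rho ,
\]
with $B_\lambda$ as in Lemma~\ref{lem:Section-interpolation}, taking $\gamma_0=\chi$ (so $F_{n-1}(\chi)\ge0$ by \eqref{semisubsolutioncondition}) and $\gamma_1=\widehat\chi$ (so $F_{n-1}(\widehat\chi)>0$ by \eqref{eq:Section-strict-barrier}). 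Choose $a\le 1$, so that $\lambda\in(0,1]$.

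\textbf{Basic properties.} Continuity of $\psi$ and $0\le\psi\le1$ follow from $\sup\rho=0$ and $\rho\to-\infty$ at $Z$. Smoothness off $Z$ is clear. Positivity of $\widetilde\chi$ holds because $B_\lambda\ge c\omega$ and the gradient term is semipositive. Adding a semipositive form preserves the $J$-cone condition: $F_{n-1}$ is monotone in the sense that $\gamma'\ge\gamma>0$ and $F_{n-1}(\gamma)>0$ imply $F_{n-1}(\gamma')>0$, as already used for \eqref{eq:Section-strict-barrier}. So Lemma~\ref{lem:Section-interpolation} gives $F_{n-1}(\widetilde\chi)\ge F_{n-1}(B_\lambda)$ in the appropriate sense, and this is $\ge\delta\lambda\omega^{n-1}>0$, proving \eqref{eq:J-flow-barrier-cone}.

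\textbf{Blow-up \eqref{eq:J-equation-barrier-blow-up}.} Expand with $\beta:=\lambda a\sqrt{-1}\partial\rho\wedge\bar\partial\rho$, which has rank one, so $\beta^2=0$:
\[
F_n(\widetilde\chi)=F_n(B_\lambda)+n\beta\wedge F_{n-1}(B_\lambda).
\]
Then:
\begin{itemize}
\item $F_n(B_\lambda)$ is bounded.
\item $\beta\wedge F_{n-1}(B_\lambda)\ge \delta\lambda^2 a\,|\partial\rho|^2_\omega\,\omega^n/n$.
\end{itemize}
Near $D_i$ the gradient satisfies $|\partial\rho|^2\gtrsim a_i^2/\|s_i\|^2$, while $\lambda=ae^{a\rho}\approx a\prod\|s_i\|^{2aa_i}$. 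Thus the second term is $\gtrsim \prod\|s_j\|^{4aa_j}\|s_i\|^{-2}$, which tends to $+\infty$ provided $4a\sum_j a_j<1$.

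This is the main obstacle, and it comes in two parts. First, one must check that the lower bound on $|\partial\rho|^2$ is uniform near intersections of several $D_i$. This holds because $\partial\log\|s_i\|^2$ terms with positive coefficients cannot cancel in the relevant normal directions. Second, one must handle the $F_n(B_\lambda)$ term, which could be negative but is bounded.

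\textbf{No minimum on $Z$.} Suppose $u-\psi$ has a local minimum at $x_0\in Z$. Since $u$ is smooth and $\psi(x_0)=0\le\psi$, we would need $u(x)-u(x_0)\ge\psi(x)$ near $x_0$. Pick a holomorphic disc (or real line) transverse to some $D_i$ at $x_0$ with parameter $s$. Along it, $\psi\approx C|s|^{2aa_i}\cdot(\text{bounded below})$ (using the bound $\psi\ge c\prod\|s_j\|^{2aa_j}$). Meanwhile $u(x)-u(x_0)=O(|s|)$, with a linear part that changes sign. Choosing the direction where the linear part is $\le0$, we get $O(|s|^2)\ge c|s|^{2aa_i\cdot k}$, where $k$ accounts for other divisors through $x_0$ with the product exponent $<1$ once $2a\sum a_j<1$. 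This is a contradiction.
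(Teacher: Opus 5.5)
Your overall route is the paper's. You use the same splitting $\widetilde\chi=B_\lambda+a\lambda\sqrt{-1}\partial\rho\wedge\bar\partial\rho$, the interpolation lemma for the cone condition, and the expansion $F_n(\widetilde\chi)=F_n(B_\lambda)+n\beta\wedge F_{n-1}(B_\lambda)$, which reduces the blow-up to showing $e^{2a\rho}|\partial\rho|^2_\omega\to+\infty$. The gap is at the step you call the main obstacle.

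The bound $|\partial\rho|^2_\omega\gtrsim a_i^2/\|s_i\|^2$ holds only near smooth points of $D_i$ lying off the other components. The $D_i$ are arbitrary prime divisors. They can be singular, where $\partial s_i$ vanishes and the bound fails. They can also meet non-transversally. Your claim that the terms $a_i\partial\log\|s_i\|^2$ cannot cancel in normal directions is false, because their coefficients $a_i\,\partial f_i/f_i$ are complex. For example, take $D_1=\{z_1=0\}$, $D_2=\{z_1=z_2^2\}$, $a_1=a_2$, and points with $z_1=\epsilon$, $z_2^2=2\epsilon$. There the $dz_1$-coefficients $a_1/z_1$ and $a_2/(z_1-z_2^2)$ cancel exactly. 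What remains is $|\partial\rho|^2\sim\epsilon^{-1}$, not $\|s_1\|^{-2}\sim\epsilon^{-2}$. The blow-up still holds in this example, but your argument does not establish it in general.

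The missing idea is the paper's log resolution. Take $\pi:\widetilde M\to M$, an isomorphism off $Z$, with $\pi^{-1}(Z)$ of simple normal crossing support. Locally $\pi^*\rho=\sum_j b_j\log|w_j|^2+H$, with every $b_j>0$ and $H$ smooth. The $dw_j$ are independent, so no cancellation can occur. Since $\pi^*\omega\le C\widetilde\omega$, the dual norms satisfy $|\partial\rho|^2_\omega\circ\pi\ge C^{-1}|\partial(\pi^*\rho)|^2_{\widetilde\omega}\ge c\,d(w)^{-2}$, where $d(w)=\min_j|w_j|$. Also $e^{2a\pi^*\rho}\ge c\,d(w)^{4aB}$ with $B=\sum_j b_j$. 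Choosing $2aB<1$ on finitely many charts gives the blow-up.

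Your Jeffres step has a milder form of the same problem. Along a generic disc through $x_0$ one has $\|s_j\|\sim|s|^{m_j}$, where $m_j$ is the multiplicity of $D_j$ at $x_0$. The relevant exponent is therefore $2a\sum_j a_jm_j$, which the condition $2a\sum_j a_j<1$ does not control. You can repair this by shrinking $a$, since multiplicities are bounded on compact $M$. Alternatively, as in the paper, use the disc $t\mapsto(t,\dots,t,0,\dots,0)$ in the resolution coordinates. Along it $\psi\circ\pi\ge c|t|^{2aB}$, while $u\circ\pi$ changes by only $O(|t|)$.
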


\begin{proof}
Since \(\rho\leq0\) and \(\rho\to-\infty\) along \(Z\), the extension
\(\psi|_Z=0\) is continuous and \(0\leq\psi\leq1\).

Set \(\lambda=ae^{a\rho}\). We choose \(a<1\) such that
\(0\leq\lambda<1\). Using
\eqref{eq:J-equation-barrier-off-Z}, on \(M\setminus Z\) we have
\begin{equation}\label{eq:J-flow-barrier-decomposition}
\widetilde\chi
=
(1-\lambda)\chi+\lambda\widehat\chi
+a\lambda\sqrt{-1}\partial\rho\wedge\bar\partial\rho.
\end{equation}
Let
\[
B_\lambda:=(1-\lambda)\chi+\lambda\widehat\chi,
\qquad
R:=a\lambda\sqrt{-1}\partial\rho\wedge\bar\partial\rho.
\]
Then \(R\geq0\), \(R^2=0\), and
\(\widetilde\chi=B_\lambda+R\). In particular,
\(\widetilde\chi>0\).

Since \(F_{n-1}(\chi)\geq0\) by the smooth boundary cone condition  and
\(F_{n-1}(\widehat\chi)>0\) by ~\eqref{eq:Section-strict-barrier}, Lemma~\ref{lem:Section-interpolation}
implies
\begin{equation}\label{eq:F_n-1-B>=}
F_{n-1}(B_\lambda)
\geq\delta\lambda\,\omega^{n-1},\end{equation} and then~\eqref{eq:Section-descending-cone}
implies
\(F_{n-2}(B_\lambda)\geq0\). Since \(R^2=0\), on $M\setminus Z$ we have
\[
F_{n-1}(\widetilde\chi)=F_{n-1}(B_\lambda+R)
=
F_{n-1}(B_\lambda)
+(n-1)R\wedge F_{n-2}(B_\lambda)>0,
\]
which proves \eqref{eq:J-flow-barrier-cone}. A similar computation gives
\begin{equation}\label{eq:F_n-expansion}
F_n(\widetilde\chi)
=
F_n(B_\lambda)+nR\wedge F_{n-1}(B_\lambda).
\end{equation}
The forms \(B_\lambda\), \(0\leq\lambda\leq1\), form a compact family
of K\"ahler forms, so \(F_n(B_\lambda)\geq-C\omega^n\). Combining this with ~\eqref{eq:F_n-1-B>=}-~\eqref{eq:F_n-expansion}, we have
\begin{equation}\label{eq:J-flow-weighted-gradient}
\frac{F_n(\widetilde\chi)}{\omega^n}
\geq
-C+c_a e^{2a\rho}|\partial\rho|_\omega^2
\end{equation}
for a constant \(c_a>0\). We now prove that
\[
e^{2a\rho}|\partial\rho|_\omega^2\longrightarrow+\infty
\qquad\text{as }x\longrightarrow Z.
\]
Let \(\pi:\widetilde M\to M\) be a log resolution of \(Z\), which is
an isomorphism over \(M\setminus Z\), such that
\(\pi^{-1}(Z)\) has simple normal crossing support; see
Hironaka~\cite{Hironaka1977}. Fix a K\"ahler metric
\(\widetilde\omega\) on \(\widetilde M\).

Around every point of \(\pi^{-1}(Z)\), choose holomorphic coordinates
\(w=(w_1,\ldots,w_n)\) such that
\[
\pi^{-1}(Z)=\{w_1\cdots w_k=0\}
\]
as a set. Recall
\[
\rho=\varphi+\sum_{i=1}^m a_i\log\|s_i\|_{h_i}^2,
\qquad a_i>0.
\]
Since the pullback of each defining section can be written locally as
\(
\pi^*s_i=u_i\prod_{j=1}^k w_j^{m_{ij}},
\)
where \(u_i\) is nowhere vanishing and \(m_{ij}\geq0\), therefore
\begin{equation}\label{eq:J-flow-rho-resolution}
\pi^*\rho
=
\sum_{j=1}^k b_j\log|w_j|^2+H,
\qquad
b_j:=\sum_{i=1}^ma_i m_{ij}>0,
\end{equation}
where \(H\) is a  smooth function. The positivity of \(b_j\) follows from the
fact that every component \(\{w_j=0\}\) of \(\pi^{-1}(Z)\) is contained
in \(\pi^{-1}(D_i)\) for some \(i\), so we have \(m_{ij}>0\) for some
\(i\), and hence \(b_j=\sum_i a_i m_{ij}>0\).

Since \(\pi^*\omega\leq C\widetilde\omega\), on
\(\widetilde M\setminus\pi^{-1}(Z)\) we have
\begin{equation}\label{eq:pullback-partialrhp-estimate}
    |\partial\rho|_\omega^2\circ\pi
=
|\partial(\pi^*\rho)|_{\pi^*\omega}^2
\geq C^{-1}
|\partial(\pi^*\rho)|_{\widetilde\omega}^2.
\end{equation}
Shrink the coordinate chart so that \(|w_j|<1\), and set
\(d(w):=\min_{1\leq j\leq k}|w_j|\). Choose \(j_0\) such that
\(|w_{j_0}|=d(w)\). From
\eqref{eq:J-flow-rho-resolution},
\begin{equation*}
\frac{\partial(\pi^*\rho)}{\partial w_{j_0}}
=
\frac{b_{j_0}}{w_{j_0}}
+\frac{\partial H}{\partial w_{j_0}}.
\end{equation*}
Since \(H\) is smooth, after restricting to a sufficiently small
neighborhood of \(\pi^{-1}(Z)\), 
\begin{equation}\label{eq:partialrho>=cd(w)}
|\partial(\pi^*\rho)|_{\widetilde\omega}^2
\geq c\,d(w)^{-2}.
\end{equation}
On the other hand,
\begin{equation}\label{eq:expotential-estimate}
e^{2a\pi^*\rho}
=
e^{2aH}\prod_{j=1}^k|w_j|^{4ab_j}
\geq c\,d(w)^{4aB},
\qquad
B:=\sum_{j=1}^k b_j.
\end{equation}
Combining~\eqref{eq:pullback-partialrhp-estimate}, ~\eqref{eq:partialrho>=cd(w)} and~\eqref{eq:expotential-estimate}, we obtain
\begin{equation}\label{eq:J-flow-resolution-growth}
\left(e^{2a\rho}|\partial\rho|_\omega^2\right)\circ\pi
\geq c\,d(w)^{4aB-2}.
\end{equation}
Since \(\pi^{-1}(Z)\) is compact, it can be covered by finitely many
coordinate charts of the above type. We may therefore choose \(a>0\)
so small that \begin{equation}\label{eq:2aB<1}
    2aB<1
\end{equation} on every such chart. Then
\(4aB-2<0\), and
\eqref{eq:J-flow-resolution-growth} implies
\[
e^{2a\rho}|\partial\rho|_\omega^2
\longrightarrow+\infty
\qquad\text{as }x\longrightarrow Z.
\]
Together with \eqref{eq:J-flow-weighted-gradient}, this proves
\eqref{eq:J-equation-barrier-blow-up}.

We finally prove that the function \(u-\psi\)
cannot attain a local minimum at a point of \(Z\). This is the usual
method in Jeffres' trick~\cite{Jeffres2000}. Suppose, to the
contrary, that for some \(u\in C^\infty(M)\), the function
\(u-\psi\) has a local minimum at a point \(p\in Z\). Choose
\(\widetilde p\in\pi^{-1}(p)\). Then
\(
(u-\psi)\circ\pi
\)
has a local minimum at \(\widetilde p\).

Choose one of the above simple-normal-crossing coordinate charts
centered at \(\widetilde p\), so that
\[
\pi^{-1}(Z)=\{w_1\cdots w_k=0\}.
\]
After shrinking the chart, consider the holomorphic disc
$$\gamma:(\Delta,0)\to(\widetilde M,\widetilde p),\qquad
\gamma(t)=(t,\ldots,t,0,\ldots,0),$$
where the first \(k\) coordinates are equal to \(t\). Then
\(\gamma(0)=\widetilde p\) and
\(\gamma(t)\notin\pi^{-1}(Z)\) for \(t\neq0\). By
\eqref{eq:J-flow-rho-resolution},
\[
\pi^*\rho(\gamma(t))
=
B\log|t|^2+H(\gamma(t)),\qquad B=\sum_{j=1}^k b_j,
\]
and hence 
\begin{equation}\label{eq:J-flow-Jeffres-growth}
e^{a\pi^*\rho(\gamma(t))}
\geq c|t|^{2aB},
\end{equation}
for a constant $c>0$.
Since \(u-\psi\) has a local minimum at \(p\in Z\), for sufficiently small
\(t\neq0\),
\[
u(\pi(\gamma(t)))-\psi(\pi(\gamma(t)))
\geq u(p)-\psi(p)=u(p).
\]
Since \(\psi=e^{a\rho}\) on \(M\setminus Z\), we obtain
\[
u(\pi(\gamma(t)))-u(p)
\geq \psi(\pi(\gamma(t)))=e^{a\pi^*\rho(\gamma(t))}\geq
c|t|^{2aB}.
\]
On the other hand, \(u\circ\pi\circ\gamma\) is smooth, and therefore
\[
|u(\pi(\gamma(t)))-u(p)|\leq C|t|.
\]
We thus obtain \[c|t|^{2aB}\leq C|t|,\] which is impossible as \(t\to0^+\),
because  \(2aB<1\)  by our choice of~\eqref{eq:2aB<1}. This contradiction proves that the function \(u-\psi\)
cannot attain a local minimum at a point of \(Z\). The proof is complete.
\end{proof}

\begin{proposition}[Uniform global $C^0$ estimate]
\label{prop:Section-global-C0}
There exists \(C>0\), independent of \(t\), such that
\begin{equation}\label{eq:Section-global-C0}
-C\leq u_t\leq0\qquad\text{on }M,\qquad 0<t\leq t_0.
\end{equation}
\end{proposition}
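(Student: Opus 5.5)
The plan is to run the minimum-point argument of Proposition~\ref{prop:Section-local-C0} with the bounded barrier \(\psi\) of Lemma~\ref{lem:bounded-barrier} in place of \(\rho\). Since \(0\leq\psi\leq1\), a lower bound for \(u_t-\psi\) is a lower bound for \(u_t\). The upper bound \(u_t\leq0\) is just the normalization \(\sup_Mu_t=0\).

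Set \(v_t:=u_t-\psi\), which is continuous on \(M\), and let \(p_t\) be a minimum point. By the last assertion of Lemma~\ref{lem:bounded-barrier}, applied with \(u=u_t\) smooth, \(p_t\notin Z\). Next I would show that \(p_t\) stays out of a fixed neighborhood \(U\) of \(Z\). By \eqref{eq:J-equation-barrier-blow-up}, choose \(U\) so that \(F_n(\widetilde\chi)\geq (c_{t_0}+1+C)\omega^n\) on \(U\setminus Z\), for a constant \(C\) fixed below. At the minimum point, \(\sqrt{-1}\partial\bar\partial v_t\geq0\), so \(\eta_t\geq\widetilde\chi+t\omega\geq\widetilde\chi\).

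The main obstacle is the comparison: from \(\eta_t\geq\widetilde\chi\) I need \(F_n(\eta_t)\geq F_n(\widetilde\chi)\). Pointwise monotonicity of \(F_n\) holds in the region where \(F_{n-1}\geq0\). The derivative of \(F_n\) in a direction \(\xi\geq0\) is \(nF_{n-1}\wedge\xi\). By \eqref{eq:J-flow-barrier-cone}, \(F_{n-1}(\widetilde\chi)>0\), and the cone \(\{F_{n-1}>0\}\) is preserved under adding positive forms. So along the segment \(\widetilde\chi+s(\eta_t-\widetilde\chi)\), \(s\in[0,1]\), \(F_n\) is nondecreasing, giving \(F_n(\eta_t)\geq F_n(\widetilde\chi)\). (If one prefers, \(t\omega\) can be included in the comparison form; it only helps.)

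Since \(F_n(\eta_t)=c_t\omega^n\leq c_{t_0}\omega^n\), we get a contradiction if \(p_t\in U\). Hence \(p_t\in M\setminus U\), a fixed compact subset of \(M\setminus Z\). On \(M\setminus U\), Proposition~\ref{prop:Section-local-C0} gives \(u_t\geq-C_{M\setminus U}\). Therefore
\[
u_t\geq v_t\geq v_t(p_t)\geq -C_{M\setminus U}-1
\]
on all of \(M\), uniformly in \(t\in(0,t_0]\). This completes the plan.
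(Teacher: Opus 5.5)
Your proposal is correct and follows the paper's proof essentially step by step. The minimum point of \(u_t-\psi\) avoids \(Z\) by the Jeffres-type property in Lemma~\ref{lem:bounded-barrier}. It is kept out of a fixed neighborhood \(U\supset Z\) by combining the blow-up of \(F_n(\widetilde\chi)/\omega^n\) with the comparison \(F_n(\eta_t)\geq F_n(\widetilde\chi)\) at that point, and the local \(C^0\) estimate on \(M\setminus U\) then closes the argument.

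The only difference is cosmetic. You obtain the comparison by integrating \(\frac{d}{ds}F_n=nF_{n-1}\wedge\xi\) along a segment, using that the \(J\)-cone is preserved under adding semipositive forms. The paper instead expands \(F_n(\widetilde\chi+H)\) binomially and uses the descending-cone property. The extra constant \(C\) in your choice of \(U\) is never used and can be dropped.
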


\begin{proof}
Let \(\psi\) be given by Lemma~\ref{lem:bounded-barrier}. By \eqref{eq:J-equation-barrier-blow-up}, there is a constant $c_*>\sup_{0<t\leq t_0}c_t$, and a fixed neighborhood
\(U\supset Z\) such that
\begin{equation}\label{eq:Section-fixed-U}
F_n(\widetilde\chi)>c_*\omega^n
\qquad\text{on }U\setminus Z.
\end{equation} Define the compact set
\(K:=M\setminus U\Subset M\setminus Z\). 

Since
\(u_t-\psi\in C^0(M)\), it attains its minimum at some \(x_t\in M\).
By the last assertion of Lemma~\ref{lem:bounded-barrier}, \(x_t\notin Z\);
hence
\[
\sqrt{-1}\partial\bar\partial(u_t-\psi)(x_t)\geq0.
\]At the minimum point \(x_t\), set
\[
    H:=t\omega+\sqrt{-1}\partial\bar\partial(u_t-\psi)\ge0.
\]
A direct expansion at \(x_t\) gives
\[
\begin{aligned}
F_n(\widetilde\chi+H)-F_n(\widetilde\chi)=
\sum_{k=1}^{n-1}\binom nk
H^k\wedge F_{n-k}(\widetilde\chi)+H^n\ge0,
\end{aligned}
\]
where the last inequality follows from 
~\eqref{eq:J-flow-barrier-cone} and~\eqref{eq:Section-descending-cone}. Therefore
\(F_n(\eta_t)(x_t)\geq F_n(\widetilde\chi)(x_t)\).

If \(x_t\in U\setminus Z\), then
\[
c_t\omega^n=F_n(\eta_t)(x_t)
\geq F_n(\widetilde\chi)(x_t)>c_*\omega^n,
\]
a contradiction. Hence \(x_t\in K\), for every $0<t\leq t_0$. By
Proposition~\ref{prop:Section-local-C0},
\(u_t(x_t)\geq-C_K\), where $C_K$ is independent of $t$. Since \(0\leq\psi\leq1\),
\[
u_t(x_t)-\psi(x_t)\geq-C_K-1.
\]
By minimality, \(u_t-\psi\geq-C_K-1\) on \(M\). Together with \(\sup_Mu_t=0\), this proves
\eqref{eq:Section-global-C0}.
\end{proof}

\subsection{Local second-order estimate}

We next establish a uniform second-order estimate on compact subsets
of \(M\setminus Z\). The argument is motivated by the second-order
estimate of Song--Weinkove~\cite[Lemma~3.1]{SongWeinkove} and its
twisted version due to Chen~\cite[Proposition~2.1]{Chen}. The local
trace calculation on \(M\setminus Z\) is almost the same as in
Chen's argument. The point in our situation is that the function
\(v_t:=u_t-\rho\) tends to \(+\infty\) along \(Z\), so that the
maximum-principle argument can be carried out entirely on
\(M\setminus Z\).

\begin{proposition}[Local second-order estimate]
\label{prop:Section-local-C2}
For every compact subset \(K\Subset M\setminus Z\), there exists a
constant \(C_K>0\), independent of \(t\), such that
\begin{equation}\label{eq:Section-local-C2}
\omega<\eta_t\leq C_K\omega
\qquad\text{on }K.
\end{equation}
\end{proposition}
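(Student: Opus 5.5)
The lower bound \(\eta_t>\omega\) is already \eqref{eq:Section-eta-lower}, so only the upper bound needs work. I will run the Song--Weinkove / Chen maximum principle for \(\log\lambda_1\), with the relative potential \(v_t=u_t-\rho\) as the auxiliary function. Concretely, on \(M\setminus Z\) I consider
\[
G:=\log\lambda_1(\eta_t)-A\,v_t=\log\lambda_1(\eta_t)-A(u_t-\rho),
\]
where \(\lambda_1\) is the largest eigenvalue of \(\eta_t\) with respect to \(\omega\) and \(A\) is a large constant chosen later. A smooth perturbation of \(\lambda_1\), as in Sz\'ekelyhidi or Chen, is used at the maximum point if needed.

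Since \(\eta_t\) is smooth on all of \(M\) for fixed \(t\) and \(v_t\to+\infty\) near \(Z\), we get \(G\to-\infty\) at \(Z\). Hence \(G\) attains its maximum at an interior point \(x_0\in M\setminus Z\). This is why we use \(v_t\) rather than \(u_t\).

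Write the equation as \(\sigma_{n-1}(\lambda)/\sigma_n(\lambda)+c_t/\sigma_n(\lambda)=1\), i.e. \(f(\lambda)=\sum 1/\lambda_i+c_t\prod\lambda_i^{-1}=1\), which is convex and decreasing in each \(\lambda_i\). Let \(L=F^{i\bar j}\nabla_i\nabla_{\bar j}\) be its linearization, with \(F^{i\bar i}=\lambda_i^{-2}+c_t\lambda_i^{-1}\sigma_n^{-1}\). The plan at \(x_0\) is:

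\begin{enumerate}
\item Differentiate the equation twice in the \(e_1\) direction. Concavity of \(-f\) together with the standard handling of third-order terms (the \(\lambda_1\) perturbation trick, which is exactly Chen's Proposition~2.1 computation) gives
\[
L(\log\lambda_1)\geq -C\sum_iF^{i\bar i}.
\]
The constant \(C\) depends only on the curvature of \(\omega\) and bounds on \(\chi\) and \(t\omega\), hence is uniform in \(t\).

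\item On \(M\setminus Z\) we have \(\sqrt{-1}\partial\bar\partial v_t=\eta_t-\widehat\chi-t\omega\) by \eqref{eq:Section-eta-v}. Therefore
\[
L(v_t)=\sum F^{i\bar i}\lambda_i-\sum F^{i\bar j}(\widehat\chi+t\omega)_{i\bar j}.
\]

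\item Use the cone condition \(F_{n-1}(\widehat\chi)>0\) from \eqref{eq:Section-strict-barrier} in Sz\'ekelyhidi's \(\mathcal C\)-subsolution form. This is the main obstacle, and must be uniform as \(c_t\to0\). The key inequality to establish is: either
\[
\sum F^{i\bar j}(\widehat\chi_{i\bar j}-\eta_{t,i\bar j})\geq\kappa\sum F^{i\bar i},
\]
or \(\lambda_1\leq R\) outright. The second alternative follows from Lemma~\ref{lem:Section-bounded-set}, since there \(\eta_t\) would be close to dominating \(\widehat\chi\). To get this dichotomy I would adapt Sz\'ekelyhidi's Proposition~6 to the twisted \(f\); the twist \(c_t\geq0\) only helps, and \(\widehat\chi\geq\kappa\omega\) is a fixed form independent of \(t\). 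Hence \(\kappa\) is uniform.
\end{enumerate}

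With these ingredients, at \(x_0\) in the first alternative we get
\[
0\geq L G\geq(A\kappa-C)\sum F^{i\bar i}-C'.
\]
Choosing \(A\) large forces \(\sum F^{i\bar i}\) to be bounded, and then the bound \(F^{i\bar i}\geq\lambda_i^{-2}\) yields an upper bound on \(\lambda_1(x_0)\). In the second alternative the bound on \(\lambda_1(x_0)\) is immediate.

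Finally, for \(x\in K\), compare values using \(G(x)\leq G(x_0)\):
\[
\log\lambda_1(x)\leq \log\lambda_1(x_0)+A\bigl(v_t(x)-v_t(x_0)\bigr).
\]
This is bounded uniformly in \(t\) for three reasons: \(v_t(x_0)\geq -C\) by Proposition~\ref{prop:Section-local-C0}; \(u_t\leq 0\); and \(-\rho\leq C_K\) on \(K\) because \(\rho\) is smooth off \(Z\). This gives \(\eta_t\leq C_K\omega\) on \(K\).
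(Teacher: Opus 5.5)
Your architecture matches the paper's. You run a maximum principle on $M\setminus Z$ for $\log(\text{second-order quantity})-Av_t$, use $v_t\to+\infty$ at $Z$, take a dichotomy from $F_{n-1}(\widehat\chi)>0$, and finish by comparison through Proposition~\ref{prop:Section-local-C0}.

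\textbf{Differences from the paper's route.} The paper uses $S=\operatorname{tr}_\omega\eta_t$ instead of $\lambda_1$. This is smooth, needs no perturbation, and gets its good third-order term from the concavity of $A\mapsto-\operatorname{tr}_A\omega$. The paper also proves the dichotomy by hand. With $b_i=(\widehat\chi+t\omega)_{i\bar i}$, the hyperplane gap gives $\sum_{j\neq i_0}b_j^{-1}\leq1-\tau$. Cauchy--Schwarz then yields $-\mathcal L_tv_t\geq\frac{(1-q-\lambda_{i_0}^{-1})^2}{1-\tau}-1-(n-1)q\geq\frac{\tau}{2(1-\tau)}$ once $S$ is large, with constants visibly uniform in $t$.

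Citing Sz\'ekelyhidi's Proposition~6 instead is legitimate, with two caveats:
\begin{enumerate}
\item You must check that $\kappa$ is uniform for the whole family $f_t$, since that proposition is stated for a fixed $f$.
\item Your justification of the second alternative via Lemma~\ref{lem:Section-bounded-set} is not valid. Failure of the first alternative does not give $\eta_t\geq\widehat\chi+t\omega-\varepsilon_0\omega$. What Proposition~6 actually gives is $F^{i\bar i}>\kappa\sum_pF^{p\bar p}$ for all $i$. This becomes $\lambda_1\leq R$ via $F^{1\bar1}<2\lambda_1^{-1}$ together with a lower bound on $\sum_pF^{p\bar p}$.
\end{enumerate}

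\textbf{The step that fails as written} is the closing one at $x_0$. From $0\geq(A\kappa-C)\sum F^{i\bar i}-C'$ you obtain an upper bound on $\sum F^{i\bar i}$. You then claim that $F^{i\bar i}\geq\lambda_i^{-2}$ bounds $\lambda_1$ from above, but it does the opposite: it only gives lower bounds on the $\lambda_i$. Moreover, for this equation $\sum F^{i\bar i}\leq1-q\leq1$ holds identically, so an upper bound on it carries no information.

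What you need is the uniform lower bound
\[
\sum_iF^{i\bar i}\geq\sum_i\lambda_i^{-2}\geq\frac1n\Bigl(\sum_i\lambda_i^{-1}\Bigr)^2=\frac{(1-q)^2}{n}\geq\frac1{4n},
\]
which holds because $q<c_t\leq\frac12$ for small $t$. With it, the first alternative is impossible at $x_0$ once $A\kappa>C+4nC'$. So $x_0$ lies in the second alternative, and that is the only source of $\lambda_1(x_0)\leq R$. With this correction and the uniformity of $\kappa$ verified, your argument closes exactly as the paper's does.
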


\begin{proof}
Recall that on \(M\setminus Z\),
\(
\eta_t=\widehat\chi+t\omega+\sqrt{-1}\partial\bar\partial v_t.
\)
Since \(F_{n-1}(\widehat\chi)>0\), the forms
\(\widehat\chi+t\omega\) satisfy a uniform strict \(J\)-cone
condition for \(0<t\leq t_0\). Equivalently, there exists
\(\tau>0\), independent of \(t\), such that for every \(x\in M\),
every complex hyperplane \(H\subset T_x^{1,0}M\), and every
\(0<t\leq t_0\),
\begin{equation}\label{eq:Section-uniform-hyperplane-gap}
\operatorname{tr}_{(\widehat\chi+t\omega)|_H}(\omega|_H)
\leq1-\tau.
\end{equation}

Following Chen~\cite[Proposition~2.1]{Chen}, we first prove the
standard trace differential inequality
\begin{equation}\label{eq:Section-log-trace-differential}
\mathcal L_t\log\operatorname{tr}_{\omega}\eta_t\geq-C_0
\qquad\text{on }M\setminus Z,
\end{equation}
where \(C_0\) is independent of \(t\). 
Equation~\eqref{eq:Section-approx-equation} can be written as
\begin{equation}\label{eq:Section-approx-equation-writtenas}
\operatorname{tr}_{\eta_t}\omega
+c_t\frac{\omega^n}{\eta_t^n}=1.
\end{equation}
Fix \(x\in M\setminus Z\), and choose holomorphic coordinates centered
at \(x\) which are normal for \(\omega\) and diagonalize \(\eta_t\)
at \(x\):
\[
\omega_{i\bar j}=\delta_{ij},\qquad
\partial_k\omega_{i\bar j}=0,\qquad
g_{i\bar j}:=(\eta_t)_{i\bar j}=\lambda_i\delta_{ij}.
\]
Set
\[
S:=\operatorname{tr}_{\omega}\eta_t=\sum_i\lambda_i,
\qquad
q:=\frac{c_t}{\lambda_1\cdots\lambda_n}.
\]
Then \(\sum_i\lambda_i^{-1}+q=1\). In particular,
\(0<\lambda_i^{-1}<1\) and \(0\leq q<1\).

Let \(\mathcal L_t\) denote the linearization at \(\eta_t\) of
\[
A\longmapsto-\operatorname{tr}_{A}\omega
-c_t\frac{\omega^n}{A^n}.
\]
At the chosen point,
\[
\mathcal L_t f=\sum_k F^{k\bar k}f_{k\bar k},
\qquad
F^{k\bar k}=\frac1{\lambda_k^2}+\frac{q}{\lambda_k}>0,
\]
so \(\mathcal L_t\) is elliptic.

Since \(S=\omega^{i\bar j}g_{i\bar j}\), normality of the coordinates
gives, at \(x\),
\[
S_k=\sum_i g_{i\bar i,k},
\qquad
S_{k\bar k}
=
\sum_i g_{i\bar i,k\bar k}
+\sum_i\lambda_i(\omega^{i\bar i})_{k\bar k}.
\]
Consequently,
\begin{align*}
\mathcal L_t\log S
={}&
\frac1S\sum_{i,k}F^{k\bar k}g_{i\bar i,k\bar k}
-\frac1{S^2}\sum_kF^{k\bar k}
\left|\sum_i g_{i\bar i,k}\right|^2
+\frac1S\sum_{i,k}
F^{k\bar k}\lambda_i(\omega^{i\bar i})_{k\bar k}.
\end{align*}
The last term is uniformly bounded. Indeed, the second derivatives
of \(\omega^{-1}\) in normal coordinates are controlled by the
curvature of the fixed K\"ahler metric \(\omega\), while
\[
\sum_kF^{k\bar k}
=
\sum_k\lambda_k^{-2}
+q\sum_k\lambda_k^{-1}
\leq
\left(\sum_k\lambda_k^{-1}\right)^2
+q\sum_k\lambda_k^{-1}
=1-q\leq1.
\]
Hence
\[
\left|
\frac1S\sum_{i,k}
F^{k\bar k}\lambda_i(\omega^{i\bar i})_{k\bar k}
\right|\leq C.
\]
It remains to estimate the first two terms above. Differentiating equation~\eqref{eq:Section-approx-equation-writtenas}
twice and using the K\"ahler identities gives exactly the trace
calculation in~\cite[Proposition~2.1]{Chen}. In our situation, the
twisting coefficient \(c_t\) is constant,
so the terms involving first or second derivatives of the twisting
function in Chen's calculation vanish. After absorbing the curvature
terms of the fixed metric \(\omega\) into a uniform constant, one
obtains
\[
\mathcal L_t\log S
\geq
-C
-\frac1{S^2}\sum_kF^{k\bar k}
\left|\sum_i g_{i\bar i,k}\right|^2
+\frac1S\sum_{i,j,k}
\frac{F^{i\bar i}}{\lambda_j}|g_{i\bar j,k}|^2.
\]
 For each \(k\), the Cauchy--Schwarz inequality gives
\[
\left|\sum_i g_{i\bar i,k}\right|^2
\leq
S\sum_i\frac{|g_{i\bar i,k}|^2}{\lambda_i}.
\]
Using the K\"ahler identity \(g_{i\bar i,k}=g_{k\bar i,i}\), we obtain
\[
\frac1{S^2}\sum_kF^{k\bar k}
\left|\sum_i g_{i\bar i,k}\right|^2
\leq
\frac1S\sum_{i,k}
\frac{F^{k\bar k}}{\lambda_i}|g_{i\bar i,k}|^2
=
\frac1S\sum_{i,k}
\frac{F^{k\bar k}}{\lambda_i}|g_{k\bar i,i}|^2
\leq
\frac1S\sum_{i,j,k}
\frac{F^{i\bar i}}{\lambda_j}|g_{i\bar j,k}|^2.
\]
This proves~\eqref{eq:Section-log-trace-differential}.

We now use the singular barrier. Consider the smooth function
\[
G_t:=\log S-A v_t
\qquad\text{on }M\setminus Z,
\]
where \(A>0\) will be chosen below. For each fixed \(t>0\), 
\(v_t=u_t-\rho\to+\infty\) as \(x\to Z\), \(\log S\) is a smooth function on $M$, and hence \(G_t\) attains its maximum
at some point \(x_t\in M\setminus Z\).

We claim that there exist constants \(R,\varepsilon>0\), independent
of \(t\), such that
\begin{equation}\label{eq:Section-Lv-gap}
S(x_t)\geq R
\quad\Longrightarrow\quad
-\mathcal L_t v_t(x_t)\geq\varepsilon.
\end{equation}
To prove the claim, choose at \(x_t\) the coordinates used above and
write
\[
B_t:=\widehat\chi+t\omega,
\qquad
b_i:=(B_t)_{i\bar i}.
\]
For each \(i\), let \(H_i\) be the coordinate hyperplane obtained by
omitting the \(i\)-th direction. By
\eqref{eq:Section-uniform-hyperplane-gap},
\[
\operatorname{tr}_{B_t|_{H_i}}(\omega|_{H_i})\leq1-\tau.
\]
Although \(B_t\) need not be diagonal in these coordinates, for every
positive definite Hermitian matrix \(B\) one has
\((B^{-1})_{j\bar j}\geq \frac{1}{B_{j\bar j}}\). Applying this to the
restriction \(B_t|_{H_i}\) gives
\begin{equation}\label{eq:Section-bi-reciprocal}
\sum_{j\neq i}\frac1{b_j}
\leq
\operatorname{tr}_{B_t|_{H_i}}(\omega|_{H_i})
\leq1-\tau.
\end{equation}
Since \((v_t)_{i\bar i}=\lambda_i-b_i\), we compute
\[
-\mathcal L_t v_t
=
\sum_iF^{i\bar i}(b_i-\lambda_i)
=
\sum_i\frac{b_i}{\lambda_i^2}
+q\sum_i\frac{b_i}{\lambda_i}
-1-(n-1)q.
\]
Choose \(i_0\) so that \(\lambda_{i_0}=\max_i\lambda_i\). 
Dropping the nonnegative term
\(q\sum_i b_i\lambda_i^{-1}\) and applying the weighted
Cauchy--Schwarz inequality together with
\eqref{eq:Section-bi-reciprocal}, we obtain
\[
-\mathcal L_t v_t
\geq
\sum_{j\neq i_0}\frac{b_j}{\lambda_j^2}
-1-(n-1)q
\geq
\frac{\left(\sum_{j\neq i_0}\lambda_j^{-1}\right)^2}
{\sum_{j\neq i_0}b_j^{-1}}
-1-(n-1)q
\geq
\frac{(1-q-\lambda_{i_0}^{-1})^2}{1-\tau}
-1-(n-1)q.
\]
Since
\(
\lambda_{i_0}^{-1}\leq \frac{n}{S},\) and
\(
q\leq C\lambda_{i_0}^{-1}\leq\frac{C}{S},
\) when $S$ is sufficiently large,
we have
\[
\frac{(1-q-\lambda_{i_0}^{-1})^2}{1-\tau}
-1-(n-1)q
\geq
\frac{\tau}{1-\tau}-\frac{C}{S}\geq 
\frac{\tau}{2(1-\tau)},
\]
where \(C\) is independent of \(t\). Hence, choosing \(R\) sufficiently
large,
we obtain~\eqref{eq:Section-Lv-gap}.

We return to the maximum point \(x_t\). Since
\(\mathcal L_tG_t(x_t)\leq0\), the differential inequality
\eqref{eq:Section-log-trace-differential} implies
\[
0\geq\mathcal L_tG_t(x_t)
\geq-C_0-A\mathcal L_t v_t(x_t).
\]
Choose \(A>0\) such that \(A\varepsilon>C_0+1\). If
\(S(x_t)\geq R\), then~\eqref{eq:Section-Lv-gap} gives
\(0\geq-C_0+A\varepsilon>1\), a contradiction. Therefore
\(S(x_t)\leq R\).

By the maximality of \(G_t\), for every \(x\in M\setminus Z\),
\[
\log S(x)-Av_t(x)
\leq
\log R-Av_t(x_t)
\leq
\log R-A\inf_{M\setminus Z}v_t.
\]
Consequently,
\begin{equation}\label{eq:Section-weighted-trace-estimate}
\operatorname{tr}_{\omega}\eta_t
\leq
R\exp\!\left(
A\bigl(v_t-\inf_{M\setminus Z}v_t\bigr)
\right)
\qquad\text{on }M\setminus Z.
\end{equation}

By Proposition~\ref{prop:Section-local-C0},
\(v_t=u_t-\rho\geq-C\) on \(M\setminus Z\), with \(C\) independent
of \(t\). If \(K\Subset M\setminus Z\), then \(\rho\) is bounded on
\(K\) and \(u_t\leq0\), hence \(v_t\leq C_K\) on \(K\). It follows
from~\eqref{eq:Section-weighted-trace-estimate} that
\(\operatorname{tr}_{\omega}\eta_t\leq C_K\) on \(K\), and therefore
\(\eta_t\leq C_K\omega\). Combining this with the  lower bound
\(\eta_t>\omega\) from~\eqref{eq:Section-eta-lower} proves
\eqref{eq:Section-local-C2}.
\end{proof}

\subsection{Local higher-order estimates}

The local second-order estimate shows that, on every compact subset
of \(M\setminus Z\), the eigenvalues of \(\eta_t\) with respect to
\(\omega\) remain in a fixed compact subset of the positive cone,
independently of \(t\). We can therefore regard the twisted
\(J\)-equation as a uniformly elliptic concave equation there.
The complex Evans--Krylov theory then gives a genuine
\(C^{2,\alpha}\)-estimate, and the higher-order estimates follow by
the usual Schauder bootstrapping. We refer to
\cite{Szekelyhidi2018} for this general regularity work for
fully nonlinear equations on Hermitian manifolds.

\begin{proposition}[Local higher-order estimates]
\label{prop:Section-higher-order}
For every compact subset \(K\Subset M\setminus Z\) and every integer
\(k\geq0\), there exists a constant \(C_{K,k}>0\), independent of
\(t\), such that
\begin{equation}\label{eq:Section-higher-order}
\|u_t\|_{C^k(K)}\leq C_{K,k},
\qquad 0<t\leq t_0.
\end{equation}
\end{proposition}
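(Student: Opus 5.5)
The plan is to combine the uniform $C^0$ bound of Proposition~\ref{prop:Section-global-C0} and the local second-order bound of Proposition~\ref{prop:Section-local-C2} with the standard interior regularity theory for concave, uniformly elliptic equations. Fix $K\Subset M\setminus Z$ and choose nested neighbourhoods $K\Subset K'\Subset K''\Subset M\setminus Z$. Cover $K''$ by finitely many coordinate balls $B_{2r}\subset M\setminus Z$.

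\emph{Step 1: zeroth, first and second order.} Proposition~\ref{prop:Section-global-C0} gives $|u_t|\leq C$ on all of $M$. Proposition~\ref{prop:Section-local-C2}, applied on $K''$, gives $\omega<\eta_t\leq C_{K''}\omega$. Since $\chi+t\omega$ is uniformly bounded, $\Delta_\omega u_t$ is bounded on $K''$. Combined with the $C^0$ bound, interior $W^{2,p}$ estimates for the Laplacian then give $\|u_t\|_{C^{1,\beta}(K')}\leq C$ for every $\beta<1$. In particular the complex Hessian is bounded, but the full real Hessian is not yet controlled.

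\emph{Step 2: $C^{2,\alpha}$ estimate.} On $K''$ the eigenvalues $\lambda$ of $\eta_t$ with respect to $\omega$ lie in the fixed compact set $\{1\leq\lambda_i\leq C_{K''}\}$. Write the equation as
\[
f(\lambda):=-\sum_i\lambda_i^{-1}-c_t\prod_i\lambda_i^{-1}=-1 .
\]
The function $f$ is concave on the positive cone, since $-1/\lambda_i$ is concave and so is $-\prod_i\lambda_i^{-1}$. Its derivatives $F^{i\bar i}=\lambda_i^{-2}+q\lambda_i^{-1}$ are pinched between two positive constants that do not depend on $t$, because $0\leq c_t\leq Ct_0$. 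So locally this is a uniformly elliptic, concave equation
\[
F\bigl(\chi_{i\bar j}+t\omega_{i\bar j}+(u_t)_{i\bar j},x\bigr)=-1,
\]
with smooth $x$-dependence that is uniform in $t$. The complex Evans--Krylov theorem (in the form used by Sz\'ekelyhidi~\cite{Szekelyhidi2018}, or Tosatti--Wang--Weinkove--Yang) then gives $\|u_t\|_{C^{2,\alpha}(K')}\leq C$. This step is the main obstacle: we must check that the structural constants (concavity, ellipticity bounds, and dependence on $c_t$ and $t$) are independent of $t$. Concavity holds on the whole cone, so the only thing to verify is that the ellipticity bounds hold on the compact eigenvalue set above. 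To avoid a real-Hessian bound as input, I would invoke the version of Evans--Krylov that requires only a bound on $\Delta u$, as in~\cite{Szekelyhidi2018}.

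\emph{Step 3: bootstrap.} Differentiate the equation in a coordinate direction $\partial_\ell$. The derivative $w=\partial_\ell u_t$ satisfies a linear equation
\[
F^{i\bar j}w_{i\bar j}=g,
\]
where the coefficients $F^{i\bar j}$ lie in $C^{\alpha}$ and are uniformly elliptic, and $g$ is bounded in $C^\alpha$ in terms of $\|u_t\|_{C^{2,\alpha}}$ and the fixed data. Interior Schauder estimates give $w\in C^{2,\alpha}$, i.e.\ $u_t\in C^{3,\alpha}$, on a slightly smaller set. Iterating on a finite chain of shrinking neighbourhoods between $K'$ and $K$ yields $\|u_t\|_{C^{k}(K)}\leq C_{K,k}$ for every $k$, with constants independent of $t\in(0,t_0]$.
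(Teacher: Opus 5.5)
Your proposal is correct and follows essentially the same route as the paper. The uniform $C^0$ bound and the local complex-Hessian bound put the eigenvalues of $\eta_t$ in a fixed compact subset of the positive cone. Uniform ellipticity and concavity of $-\operatorname{tr}(A^{-1}W)-c_t\det W/\det A$ (with constants independent of $t$) then give a $C^{2,\alpha}$ bound via the complex Evans--Krylov estimate, and differentiating the equation and applying interior Schauder estimates bootstraps to all orders. The differences are minor: you verify concavity through the symmetric eigenvalue function rather than the matrix second variation, and your intermediate $C^{1,\beta}$ step via $W^{2,p}$ estimates is not needed but harmless.
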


\begin{proof}
Fix compact subsets
\[
K\Subset K'\Subset M\setminus Z.
\]
By Proposition~\ref{prop:Section-global-C0}, the functions \(u_t\)
are uniformly bounded. By Proposition~\ref{prop:Section-local-C2},
\[
\omega\leq\eta_t\leq C_{K'}\omega
\qquad\text{on }K'.
\]
Thus,  let \(\lambda_1,\ldots,\lambda_n\) be the eigenvalues of
\(\eta_t\) with respect to \(\omega\), then
\begin{equation}\label{eq:Section-eigenvalue-two-sided}
1\leq\lambda_i\leq C_{K'},
\qquad 1\leq i\leq n,
\end{equation}
uniformly for \(0<t\leq t_0\).

We first verify uniform ellipticity and concavity of the equation.
In local holomorphic coordinates, let \(A=(A_{i\bar j})>0\) be a
positive definite Hermitian matrix and write
\(W=(\omega_{i\bar j})\). Define
\[
\mathcal F_t(x,A)
:=
-\operatorname{tr}(A^{-1}W)
-c_t\frac{\det W}{\det A}.
\]
Then the twisted \(J\)-equation
\[
\eta_t^n
=
n\eta_t^{n-1}\wedge\omega+c_t\omega^n
\]
is exactly
\begin{equation}\label{eq:Section-F-equation}
\mathcal F_t(x,\eta_t)=-1.
\end{equation}

At a fixed point, choose \(\omega\)-unitary coordinates in which
\(A=\operatorname{diag}(\lambda_1,\ldots,\lambda_n)\), and let
\[
q:=\frac{c_t}{\lambda_1\cdots\lambda_n}.
\]
The first variation of \(\mathcal F_t\) is
\[
D_A\mathcal F_t(H)
=
\operatorname{tr}(A^{-1}HA^{-1}W)
+
q\,\operatorname{tr}(A^{-1}H).
\]
In particular,
\begin{equation}\label{eq:Section-F-linearization}
\mathcal F_t^{\,i\bar j}
=
\delta_{ij}
\left(
\frac1{\lambda_i^2}+\frac{q}{\lambda_i}
\right)
\end{equation}
at such a point. Since \(c_t\) is uniformly bounded and
\eqref{eq:Section-eigenvalue-two-sided} holds, there exist positive
constants \(\kappa_K,\Lambda_K\), independent of \(t\), such that
\begin{equation}\label{eq:Section-uniform-ellipticity}
\kappa_K\sum_i|\xi_i|^2
\leq
\sum_{i,j}\mathcal F_t^{\,i\bar j}\xi_i\bar\xi_j
\leq
\Lambda_K\sum_i|\xi_i|^2
\end{equation}
on \(K'\). Thus the equation is uniformly elliptic there, with
ellipticity constants independent of \(t\).

We next verify concavity. For a Hermitian matrix \(H\), differentiating
twice in the direction \(H\) gives
\begin{align}
D_A^2\mathcal F_t(H,H)
={}&
-2\operatorname{tr}
\bigl(A^{-1}HA^{-1}HA^{-1}W\bigr) \notag\\
&-
c_t\frac{\det W}{\det A}
\left\{
\bigl[\operatorname{tr}(A^{-1}H)\bigr]^2
+
\operatorname{tr}(A^{-1}HA^{-1}H)
\right\}.
\label{eq:Section-F-concavity}
\end{align}
Every term on the right-hand side is nonpositive. Indeed, setting
\(P=A^{-1/2}HA^{-1/2}\) and
\(Q=A^{-1/2}WA^{-1/2}>0\), we have
\[
\operatorname{tr}
\bigl(A^{-1}HA^{-1}HA^{-1}W\bigr)
=
\operatorname{tr}(P^2Q)\geq0,
\]
while
\(\operatorname{tr}(A^{-1}HA^{-1}H)=\operatorname{tr}(P^2)\geq0\).
Consequently,
\[
D_A^2\mathcal F_t(H,H)\leq0,
\]
so \(\mathcal F_t(x,\cdot)\) is concave on the positive definite
cone.

We now apply the complex Evans--Krylov estimate. In local coordinates
the equation takes the form
\[
\mathcal F_t
\bigl(
x,\chi(x)+t\omega(x)
+(u_t)_{i\bar j}(x)
\bigr)
=-1.
\]
The background tensors \(\chi+t\omega\) and \(\omega\) have
\(C^m\)-norms bounded independently of \(t\), for every fixed \(m\).
Together with the uniform \(C^0\)-estimate,
\eqref{eq:Section-eigenvalue-two-sided},
\eqref{eq:Section-uniform-ellipticity}, and the concavity proved
above, the interior complex Evans--Krylov estimate
(see, for example, \cite{Szekelyhidi2018}) yields some
\(\alpha\in(0,1)\), independent of \(t\), such that
\begin{equation}\label{eq:Section-C2alpha}
\|u_t\|_{C^{2,\alpha}(K)}
\leq C_K.
\end{equation}
Here \(C^{2,\alpha}\) denotes the usual real H\"older norm in local
coordinates.

It remains to bootstrap the estimate. Differentiating
\eqref{eq:Section-F-equation} in a coordinate direction \(z^\ell\)
gives
\[
\mathcal F_t^{\,i\bar j}
\partial_i\partial_{\bar j}(\partial_\ell u_t)
=
-\partial_\ell^x\mathcal F_t
-\mathcal F_t^{\,i\bar j}
\partial_\ell(\chi_{i\bar j}+t\omega_{i\bar j}),
\]
 where $\partial_\ell^x\mathcal F_t$ differentiates only the
$x$-dependent coefficients of the operator. By \eqref{eq:Section-C2alpha}, the coefficients
\(\mathcal F_t^{\,i\bar j}\) are uniformly \(C^\alpha\) on compact
subsets of \(M\setminus Z\), and
\eqref{eq:Section-uniform-ellipticity} gives uniform ellipticity.
The right-hand side is also uniformly \(C^\alpha\), since
\(\chi,\omega\) are fixed smooth forms and \(c_t\) is constant on
\(M\). The interior Schauder estimate therefore implies, after shrinking from
\(K'\) to \(K\),
\[
\|u_t\|_{C^{3,\alpha}(K)}\leq C_K.
\]
Differentiating the equation repeatedly and applying the same
Schauder estimate inductively implies, for every integer \(k\geq0\),
\[
\|u_t\|_{C^k(K)}\leq C_{K,k}.
\] This completes the proof.
\end{proof}

\subsection{Convergence to a bounded potential solution}

We now pass to the limit as \(t\to0\). The global \(C^0\) estimate
implies a bounded limiting potential, while the local higher-order
estimates ensure that the limit is smooth on \(M\setminus Z\).

\begin{theorem}\label{thm:potential-solution-Section}
Under normalization~\eqref{normalizationcondition} and smooth boundary cone condition~\eqref{semisubsolutioncondition},  there exist a sequence
\(t_j\to0^+\) and a function
\[
u_\infty\in L^\infty(M)\cap C^\infty(M\setminus Z),\quad \sup_{M}u_\infty=0,
\]
such that
\begin{equation}\label{eq:Section-convergence}
u_{t_j}\longrightarrow u_\infty
\quad\text{in }L^1(M),
\qquad
u_{t_j}\longrightarrow u_\infty
\quad\text{in }C^\infty_{\mathrm{loc}}(M\setminus Z).
\end{equation}
Moreover,
\(\chi+\sqrt{-1}\partial\bar\partial u_\infty
\geq \omega\)
in the sense of currents, and
\begin{equation}\label{eq:Section-limit-equation}
(\chi+\sqrt{-1}\partial\bar\partial u_\infty
)^n=n(\chi+\sqrt{-1}\partial\bar\partial u_\infty
)^{n-1}\wedge\omega
\end{equation}
on \(M\) in the Bedford--Taylor sense. In particular, the equation
holds smoothly on \(M\setminus Z\).
\end{theorem}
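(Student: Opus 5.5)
We pass to the limit using the uniform estimates already proved. By Proposition~\ref{prop:Section-global-C0}, $-C\le u_t\le 0$ on $M$ for all $0<t\le t_0$. Since $\chi+t\omega+\sqrt{-1}\partial\bar\partial u_t=\eta_t>\omega$, each $u_t$ is $(\chi+t_0\omega)$-plurisubharmonic. Standard $L^1$-compactness of normalized quasi-psh functions therefore gives a sequence $t_j\to0^+$ and a limit $u_\infty$ with $u_{t_j}\to u_\infty$ in $L^1(M)$. We take $u_\infty$ to be the upper semicontinuous regularization of the limsup, so it is $\chi$-psh with $-C\le u_\infty\le 0$.

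Next, fix an exhaustion of $M\setminus Z$ by compact sets. Proposition~\ref{prop:Section-higher-order} gives uniform $C^k$ bounds on each compact set, so Arzel\`a--Ascoli and a diagonal argument allow us to pass to a further subsequence converging in $C^\infty_{\mathrm{loc}}(M\setminus Z)$. This smooth limit agrees with the $L^1$ limit almost everywhere, and hence everywhere on $M\setminus Z$ (a psh function is determined by its a.e. values). Thus $u_\infty\in L^\infty(M)\cap C^\infty(M\setminus Z)$.

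On $M\setminus Z$ we let $t_j\to0$ in \eqref{eq:Section-approx-equation}. Since $c_t\to0$, the limit form $\eta_\infty:=\chi+\sqrt{-1}\partial\bar\partial u_\infty$ satisfies $\eta_\infty^n=n\eta_\infty^{n-1}\wedge\omega$ smoothly there, and $\eta_\infty\ge\omega$ pointwise. For the current inequality on all of $M$, we note that $\eta_{t_j}-\omega\ge0$ converges weakly to $\eta_\infty-\omega$, because $u_{t_j}\to u_\infty$ in $L^1$. Positivity is preserved under weak limits, so $\eta_\infty\ge\omega$ as currents.

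For $\sup_M u_\infty=0$, we use that $\sup_M u_{t_j}=0$ together with Hartogs' lemma for the uniformly quasi-psh family: $\limsup_j \sup_M u_{t_j}\le\sup_M u_\infty$, which gives $\sup u_\infty\ge0$, while $u_\infty\le0$ is automatic.

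The main obstacle is the Bedford--Taylor equation on all of $M$. Because $u_\infty$ is bounded and $\chi$-psh, both $\eta_\infty^n$ and $\eta_\infty^{n-1}\wedge\omega$ are well-defined Bedford--Taylor measures that put no mass on pluripolar sets, in particular none on the analytic set $Z$. Both sides are therefore determined by their restrictions to $M\setminus Z$, where the equation holds smoothly. Hence the equation holds on $M$. The point to verify carefully is the non-pluripolar, zero-mass-on-$Z$ property for the mixed product $\eta_\infty^{n-1}\wedge\omega$; I would cite the standard fact that Bedford--Taylor products of bounded potentials charge no pluripolar set, applied locally with $\omega=\sqrt{-1}\partial\bar\partial(\text{smooth psh})$. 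As a consistency check, the total masses agree by \eqref{normalizationcondition}.
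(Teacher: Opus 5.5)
Your proposal is correct and follows essentially the same route as the paper. Both arguments use $L^1$-compactness of the normalized quasi-psh family together with the global $C^0$ bound, then diagonal Arzel\`a--Ascoli from the local higher-order estimates to obtain the equation smoothly on $M\setminus Z$, weak limits for $\chi+\sqrt{-1}\partial\bar\partial u_\infty\geq\omega$, and the fact that Bedford--Taylor products of bounded potentials do not charge the pluripolar set $Z$. Your extra details (Hartogs' lemma for $\sup_M u_\infty=0$, identifying the smooth and $L^1$ limits through a.e.\ agreement of quasi-psh functions, and writing $\omega$ locally as $\sqrt{-1}\partial\bar\partial$ of a smooth psh function for the mixed product) only fill in steps the paper treats as standard.
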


\begin{proof}Since
\(\chi+\sqrt{-1}\partial\bar\partial u_t
=\eta_t-t\omega>(1-t)\omega,
\)
and \(\sup_Mu_t=0\), the standard compactness theorem for normalized
\(\chi\)-plurisubharmonic functions gives, after passing to a
subsequence, a function
\[
u_\infty\in\operatorname{PSH}(M,\chi),
\qquad
\sup_Mu_\infty=0,
\]
such that
\(
u_{t_j}\longrightarrow u_\infty\) in \(L^1(M)\).

By Proposition~\ref{prop:Section-global-C0},
\(\|u_t\|_{L^\infty(M)}\leq C\). Hence \(u_\infty\in L^\infty(M)\). 
Also, passing to the weak limit, the inequality
\(\chi+\sqrt{-1}\partial\bar\partial u_{t_j}>
(1-t_j)\omega\) immediately implies
\(
\chi+\sqrt{-1}\partial\bar{\partial}u_\infty\geq\omega
\)
in the sense of currents.
By Proposition~\ref{prop:Section-higher-order} and a diagonal
Arzel\`a--Ascoli argument, after passing to a further subsequence,
\(u_{t_j}\) converges in \(C^\infty_{\mathrm{loc}}(M\setminus Z)\).
The smooth local limit agrees with the \(L^1\)-limit \(u_\infty\).
Since \(c_{t_j}\to0\), passing to the limit in
\eqref{eq:Section-approx-equation} gives
\[
(\chi+\sqrt{-1}\partial\bar{\partial}u_\infty)^n=n(\chi+\sqrt{-1}\partial\bar{\partial}u_\infty)^{n-1}\wedge\omega
\qquad\text{smoothly on }M\setminus Z.
\]

It remains only to interpret the equality globally. Since
\(u_\infty\in L^\infty(M)\), the Bedford--Taylor products
\((\chi+\sqrt{-1}\partial\bar{\partial}u_\infty)^k\), \(1\leq k\leq n\), are well defined; see
\cite{BedfordTaylor1982}. Moreover, these products do not charge
pluripolar sets. The analytic set \(Z\) is pluripolar, hence both
positive measures
\[
(\chi+\sqrt{-1}\partial\bar{\partial}u_\infty)^n,\qquad
n(\chi+\sqrt{-1}\partial\bar{\partial}u_\infty)^{n-1}\wedge\omega
\]
give zero mass to \(Z\). Since they agree on \(M\setminus Z\), they
agree on all of \(M\). This proves
\eqref{eq:Section-limit-equation}.
\end{proof}

\begin{remark}\label{rem:uniqueness-and-full-convergence}
Murakami~\cite[Theorem~1.7]{Murakami2026} proved that, under the
boundary approximation hypothesis, there exists a unique normalized
weak solution of the boundary generalized Monge--Amp\`ere equation,
where both the equation and the corresponding closed cone inequalities
are understood in terms of non-pluripolar products. In the case of the
\(J\)-equation, this gives a unique normalized \(\chi\)-psh function
\(u_T\) satisfying
\(
\left\langle T^n\right\rangle
=
n\left\langle
T^{n-1}\wedge\omega
\right\rangle,
\)
together with
\(
\left\langle T^p\right\rangle
-
p\left\langle
T^{p-1}\wedge\omega
\right\rangle
\geq0\)
for \(1\leq p\leq n-1,
\)
where
\(
T
=
\chi+\sqrt{-1}\partial\bar\partial u_T
\) is a $(1,1)$-current.

We claim that
\(
u_\infty=u_T.
\)
Thus our theorem can be viewed as a regularity upgrade of Murakami's
weak solution under the smooth boundary cone condition: the latter has a bounded
potential, smooth on \(M\setminus Z\) and satisfies the \(J\)-equation globally in the
Bedford--Taylor sense.
Indeed, the boundary approximation hypothesis of
\cite[Assumption~1.4]{Murakami2026} is satisfied in our setting.
Moreover, \(
T_\infty
:=
\chi+\sqrt{-1}\partial\bar\partial u_\infty
\) is a smooth $(1,1)$-form on \(M\setminus Z\), satisfying
\(
T_\infty^n
=
nT_\infty^{n-1}\wedge\omega\) on $M$ in the Bedford--Taylor sense, and
\(F_p(T_\infty)>0\) on \(M\setminus Z\) for
\(1\leq p\leq n-1.
\)
Since \(Z\) is analytic and hence pluripolar, the non-pluripolar
products do not charge \(Z\). Therefore the above inequalities extend
across \(Z\), and hence
\(
\left\langle T_\infty^p\right\rangle
-
p\left\langle
T_\infty^{p-1}\wedge\omega
\right\rangle
\geq0
\) on $M$ for
\(1\leq p\leq n-1.
\)
Thus \(u_\infty\) satisfies all the hypotheses of
\cite[Theorem~1.7]{Murakami2026}. By uniqueness~\cite[Theorem~1.7]{Murakami2026},
\(
u_\infty=u_T.
\)
\end{remark}

Finally, we prove the uniform \(C^0\)-estimate for the \(J\)-flow under the smooth boundary cone condition. The proof is an application of Proposition~\ref{prop:Section-global-C0}.
\begin{corollary}[Uniform \(C^0\)-estimate for the \(J\)-flow]
\label{cor:J-flow-global-C0}
Under the assumptions of
Theorem~\ref{thm:regularity of the weak solution}, let
\(\varphi=\varphi(x,s)\) be the solution of the normalized \(J\)-flow
\begin{equation}\label{eq:J-flow}
\frac{\partial\varphi}{\partial s}
=
\frac{1}{n}
\left(
1-\operatorname{tr}_{\chi_\varphi}\omega
\right),
\qquad
\chi_\varphi
:=
\chi+\sqrt{-1}\partial\bar\partial\varphi>0,
\qquad
\varphi(\cdot,0)=\varphi_0,
\end{equation}
where \(\varphi_0\in C^\infty(M)\) and
\(\chi_{\varphi_0}>0\). Then there exists a constant \(C>0\),
depending only on the background data and \(\varphi_0\), such that
\[
\sup_{s\geq0}\|\varphi(\cdot,s)\|_{L^\infty(M)}\leq C.
\]
\end{corollary}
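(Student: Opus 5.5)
Upper and lower bounds for $\varphi$ are handled separately; in both cases the flow is compared with a stationary object via the parabolic maximum principle.

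\textbf{Upper bound.} Set $\psi:=\varphi-\varphi_0$. Along the flow, $\partial_s\varphi=\frac1n(1-\operatorname{tr}_{\chi_\varphi}\omega)\le \frac1n$. This alone is not uniform in $s$. Instead I would use that the $J$-functional decreases along the flow, together with the standard fact that for the normalized $J$-flow the Aubin--Yau type functional $I_\chi(\varphi)=\int_M\varphi\,\chi_\varphi^n$ (suitably normalized) is preserved. Concretely, since $\int_M\partial_s\varphi\,\chi_\varphi^n=\frac1n\int_M(\chi_\varphi^n-n\chi_\varphi^{n-1}\wedge\omega)=0$ by~\eqref{normalizationcondition}, the Monge--Amp\`ere energy stays fixed. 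This gives $\sup_M\varphi(\cdot,s)\le C$, using $\chi$-plurisubharmonicity and the usual comparison of $\sup$ with the energy.

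\textbf{Lower bound.} The key step is to compare with the approximating twisted solutions $u_t$ of~\eqref{eq:Section-approx-equation}. For $t\in(0,t_0]$ fixed, $u_t$ is uniformly bounded by Proposition~\ref{prop:Section-global-C0}. Moreover $\eta_t=\chi+t\omega+\sqrt{-1}\partial\bar\partial u_t$ satisfies $\operatorname{tr}_{\eta_t}\omega=1-c_t\omega^n/\eta_t^n<1$. Now consider $w:=\varphi-u_t-ts$ plus a suitable time correction. Writing $\chi_\varphi=\eta_t-t\omega+\sqrt{-1}\partial\bar\partial(\varphi-u_t)$, at a spatial minimum of $\varphi-u_t$ we have $\chi_\varphi\ge\eta_t-t\omega$, so
\[
\operatorname{tr}_{\chi_\varphi}\omega\le\operatorname{tr}_{\eta_t-t\omega}\omega .
\]
This quantity is $<1+Ct$ where $\eta_t\le R\omega$, but it is not bounded at points where $\eta_t$ degenerates. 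That makes this direct comparison delicate.

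To avoid the degeneracy, I would instead use the bounded barrier $\widetilde\chi=\chi+\sqrt{-1}\partial\bar\partial\psi$ of Lemma~\ref{lem:bounded-barrier}, exactly as in the proof of Proposition~\ref{prop:Section-global-C0}. Consider $\varphi-\psi$ on $M\times[0,S]$. By the Jeffres argument in Lemma~\ref{lem:bounded-barrier}, its spatial minimum is never attained on $Z$. At a minimum point $x$ we have $\chi_\varphi\ge\widetilde\chi$. Near $Z$, inside $U$, we have $F_n(\widetilde\chi)>0$ and $F_{n-1}(\widetilde\chi)>0$, which forces $\operatorname{tr}_{\widetilde\chi}\omega<1$, hence $\operatorname{tr}_{\chi_\varphi}\omega<1$, hence $\partial_s\varphi>0$. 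Consequently the minimum of $\varphi-\psi$ cannot decrease while it sits in $U$.

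\textbf{Main obstacle.} The hardest part is controlling $\varphi$ at minimum points lying in $K=M\setminus U$. Here we only know $\operatorname{tr}_{\widetilde\chi}\omega\le 1+C$, which allows linear decay in $s$. My plan is to combine the barrier with $u_t$ on $K$, using the local bound $u_t\ge\rho-C$ and the uniform Proposition~\ref{prop:Section-global-C0}: set $\Phi:=\varphi-u_t$ and run the minimum principle. At a minimum of $\Phi$, $\partial_s\Phi\ge\frac1n(\operatorname{tr}_{\eta_t}\omega-\operatorname{tr}_{\chi_\varphi+t\omega}\omega)-Ct$-type error terms.

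I would choose $t=t(S)\sim 1/S$, so that the accumulated error $Cts\le C$ on $[0,S]$. This yields $\min\varphi(\cdot,S)\ge -C$ uniformly, because $\|u_t\|_{L^\infty}\le C$ independently of $t$. The delicate point is making the comparison $\operatorname{tr}_{\chi_\varphi+t\omega}\omega$ versus $\operatorname{tr}_{\chi_\varphi}\omega$ cost only $O(t)$, which holds since $\chi_\varphi\ge c(s)\omega$ is not available a priori. If that fails, I would instead flow the twisted problem and compare flows directly.
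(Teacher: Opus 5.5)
There is a genuine gap: as written, neither the lower bound nor the upper bound is actually proved.

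\textbf{Lower bound.} You end up at the right comparison, $\varphi-u_t$ with $t\sim 1/S$, but you never prove the one estimate it needs, and the obstruction you describe does not exist. By \eqref{eq:Section-eta-lower}, every eigenvalue of $\eta_t$ satisfies $\lambda_i>1$. So $\eta_t$ can only ``degenerate'' by eigenvalues becoming large, and that only decreases the trace. Since $\chi_{u_t}=\eta_t-t\omega$ has eigenvalues $\lambda_i-t$,
\[
1-c_t\le\sum_{i=1}^n\frac{1}{\lambda_i}\le\operatorname{tr}_{\chi_{u_t}}\omega=\sum_{i=1}^n\frac{1}{\lambda_i-t}\le\frac{1}{1-t}\sum_{i=1}^n\frac{1}{\lambda_i}\le\frac{1}{1-t}.
\]
Hence $|1-\operatorname{tr}_{\chi_{u_t}}\omega|\le C_0t$ on all of $M$, so $u_t$ is a uniform $O(t)$-approximate stationary point of the untwisted flow.

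The error should be paid on the $u_t$ side. Comparing $\operatorname{tr}_{\chi_\varphi+t\omega}\omega$ with $\operatorname{tr}_{\chi_\varphi}\omega$ would need a lower bound on $\chi_\varphi$, which you correctly note you do not have. Instead, at a spatial minimum of $\varphi-u_t$ one has $\chi_\varphi\ge\chi_{u_t}$, hence $\partial_s\varphi\ge\frac1n(1-\operatorname{tr}_{\chi_{u_t}}\omega)\ge-\frac{C_0t}{n}$. Symmetrically, $\partial_s\varphi\le\frac{C_0t}{n}$ at a spatial maximum.

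This is exactly the paper's argument. With $A=C_1+\|\varphi_0\|_{L^\infty}$, the functions $u_t\mp(A+\frac{C_0}{n}ts)$ are sub- and supersolutions, and the parabolic comparison principle applies for each fixed $t$. One then takes $t=\min\{t_0,1/(1+s)\}$ and uses Proposition~\ref{prop:Section-global-C0}. Your detour through $\varphi-\psi$ is unnecessary, and, as you observe, it does not close on $K$.

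\textbf{Upper bound.} Your energy argument has the inequality in the wrong direction. Conservation of $E(\varphi)=\frac{1}{(n+1)V}\sum_{j=0}^n\int_M\varphi\,\chi^j\wedge\chi_\varphi^{n-j}$ along the flow is correct. But $E$ is monotone with $E(c)=c$, so conservation only yields $\sup_M\varphi(\cdot,s)\ge E(\varphi_0)$, a lower bound on the supremum.

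Energy conservation can give $\sup_M\varphi\le C$ only once you already know $\varphi\ge -C$. In that case $\int_M(\varphi+C)\chi^n\le(n+1)V\,E(\varphi+C)=(n+1)V(E(\varphi_0)+C)$, and the mean-value inequality for $\chi$-psh functions bounds the supremum. So your upper bound rests on the lower bound you did not complete. The two-sided comparison with $u_t$ above gives both bounds at once.
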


\begin{proof}
For \(0<t\leq t_0= \frac{1}{4}\), let \(u_t\) solve the approximating twisted
\(J\)-equation~\eqref{eq:Section-approx-equation} and set
\(
\eta_t
=
\chi+t\omega+\sqrt{-1}\partial\bar\partial u_t.
\) 
Let \(\lambda_1,\ldots,\lambda_n\) be the eigenvalues of \(\eta_t\)
with respect to \(\omega\), then
\begin{equation}\label{eq:J-flow-eigenvalue-approximation}
1
=
\sum_{i=1}^n\frac{1}{\lambda_i}
+
\frac{c_t}{\lambda_1\cdots\lambda_n},
\qquad
\lambda_i>1,
\qquad
0<c_t\leq Ct.
\end{equation}
In particular,
\[
\chi_{u_t}
:=
\chi+\sqrt{-1}\partial\bar\partial u_t
=
\eta_t-t\omega>(1-t)\omega>0.
\]
Since the eigenvalues of \(\chi_{u_t}\) with respect to \(\omega\)
are \(\lambda_i-t\), we obtain
\begin{align}
\left|
1-\operatorname{tr}_{\chi_{u_t}}\omega
\right|
&=
\left|
\frac{c_t}{\lambda_1\cdots\lambda_n}
-
\sum_{i=1}^n
\frac{t}{\lambda_i(\lambda_i-t)}
\right| \notag\\
&\leq
c_t+\frac{t}{1-t}\sum_{i=1}^n\frac{1}{\lambda_i}
\leq C_0t
\label{eq:J-flow-approximate-stationary}
\end{align}
for some uniform constant \(C_0>0\).

By Proposition~\ref{prop:Section-global-C0}, there exists \(C_1>0\),
independent of \(t\), such that
\(\|u_t\|_{L^\infty(M)}\leq C_1\). Set
\[
A:=C_1+\|\varphi_0\|_{L^\infty(M)}.
\]
Then \(u_t-A\leq\varphi_0\leq u_t+A\) on \(M\). For fixed \(t\), define
\[
\underline\varphi_t(x,s)
:=
u_t(x)-A-\frac{C_0}{n}ts,
\qquad
\overline\varphi_t(x,s)
:=
u_t(x)+A+\frac{C_0}{n}ts.
\]
Then by~\eqref{eq:J-flow-approximate-stationary},
\[
\begin{aligned}
\frac{\partial\underline\varphi_t}{\partial s}
&=
-\frac{C_0}{n}t
\leq
\frac{1}{n}
\left(
1-\operatorname{tr}_{\chi_{\underline\varphi_t}}\omega
\right),\qquad 
\frac{\partial\overline\varphi_t}{\partial s}
&=
\frac{C_0}{n}t
\geq
\frac{1}{n}
\left(
1-\operatorname{tr}_{\chi_{\overline\varphi_t}}\omega
\right).
\end{aligned}
\]
 Hence the parabolic
comparison principle implies
\begin{equation}\label{eq:J-flow-comparison-elliptic}
u_t-A-\frac{C_0}{n}ts
\leq
\varphi(\cdot,s)
\leq
u_t+A+\frac{C_0}{n}ts
\qquad\text{on }M.
\end{equation}

For each \(s\geq0\), choose
\(
t=t(s):=\min\left\{t_0,\frac{1}{1+s}\right\}.
\)
Then \(t(s)s\leq1\). Applying
\eqref{eq:J-flow-comparison-elliptic} with \(t=t(s)\) and using the
uniform bound for \(u_t\), we conclude that
\[
\|\varphi(\cdot,s)\|_{L^\infty(M)}
\leq
C_1+A+\frac{C_0}{n}
\]
for every \(s\geq0\). This proves the corollary.
\end{proof}

\section*{Acknowledgements}

The authors would like to thank Xuan Li and Dekai Zhang for many helpful discussions.

\end{document}